\theoremstyle{plain}
\newtheorem{theorem}{Theorem}
\newtheorem{lemma}{Lemma}
\newtheorem{proposition}{Proposition}
\newtheorem{corollary}{Corollary}
\newtheorem{remark}{Remark}
\newtheorem{fact}{Fact}
\numberwithin{equation}{section}
\numberwithin{lemma}{section}
\numberwithin{proposition}{section}
\numberwithin{corollary}{section}
\numberwithin{remark}{section}
\begin{document}

\title{The Obata equation with Robin boundary condition}
\author{Xuezhang Chen}
\address{Department of Mathematics \& IMS, Nanjing University, Nanjing
210093, P. R. China}
\email{xuezhangchen@nju.edu.cn}
\author{Mijia Lai}
\address{School of Mathematical Sciences, Shanghai Jiao Tong University, Shanghai 200240, P. R. China}
\email{laimijia@sjtu.edu.cn}
\author{Fang Wang}
\address{School of Mathematical Sciences, Shanghai Jiao Tong University, Shanghai 200240, P. R. China}
\email{fangwang1984@sjtu.edu.cn}

\thanks{Chen's research was supported by NSFC (No.11771204), A Foundation for the Author of National Excellent Doctoral Dissertation of China (No.201417) and start-up grant of 2016 Deng Feng program B at Nanjing University. Lai's research was supported in part by National Natural Science Foundation of China No.11871331.
 Wang's research was supported in part by National Natural Science Foundation of China No. 11871331 and No. 11571233. }

\date{}

\begin{abstract}
We study the Obata equation with Robin boundary condition $\frac{\partial f}{\partial \nu}+af=0$ on manifolds with boundary,
where $a$ is a non-zero constant. Dirichlet and Neumann boundary conditions were previously studied by Reilly \cite{R},
Escobar \cite{Es} and Xia \cite{X}. Compared with their results, the sign of $a$ plays an important role here. The new
discovery shows besides spherical domains, there are other manifolds for both $a>0$ and $a<0$. We also consider the Obata
equation with non-vanishing Neumann condition $\frac{\partial f}{\partial \nu}=1$.
\end{abstract}
\maketitle

\section{Introduction}\label{sec.intro}
Let $(M,g)$ be a closed manifold with $\mathrm{Ric}_g\geq (n-1)g$, Lichnerowicz~\cite{L} had shown that the first non-zero
eigenvalue $\lambda_1$ of the Laplacian is no less than $n$. Later, Obata~\cite{O} proved $\lambda_1=n$ if and only if
$(M,g)$ is isometric to the standard sphere. This follows from the Obata rigidity theorem characterizing the standard sphere:
a complete manifold $(M,g)$ admits a non-constant function $f$
satisfying the Obata equation
\[
\nabla^2 f+fg=0
\]
if and only if $(M, g)$ is isometric to the standard sphere.

Lichnerowicz type eigenvalue estimate was generalized to manifolds with boundary. Suppose $(M,g)$ is a compact manifold with
boundary and $\mathrm{Ric}_g\geq( n-1)g$. Reilly \cite{R} showed the first eigenvalue $\mu_1$ of the Laplacian
with the Dirichlet boundary condition is no less than $n$, provided that the boundary is mean convex. Moreover,
$\mu_1=n$ if and only if $(M,g)$ is isometric to the standard hemisphere. Escobar~\cite{Es} and Xia~\cite{X} independently
proved that the first eigenvalue $\eta_1$ of the Laplacian with the Neumann boundary condition satisfies $\eta_1\geq n$ if
the boundary is convex. The equality holds if and only if $(M,g)$ is isometric to the standard hemisphere. The analysis of both equalities
reduces to the study of the Obata equation with Dirichlet boundary condition and Neumann boundary condition respectively.

It is thus natural to consider the Obata equation with the Robin boundary condition:
\begin{equation}\label{eq.robin}
\begin{cases}
\nabla^2 f+fg=0 & \textrm{in $M$},
\\
\frac{\partial f}{\partial \nu}+af=0 & \textrm{on $\partial M$},
\end{cases}
\end{equation}
where $\nu$ is the outward unit normal on $\partial M$ and $a$ is a non-zero constant. This is the primary goal of this paper.

Before getting into the complete discussion, let us first introduce some standard models $(M, g)$ where (\ref{eq.robin}) admits
nontrivial solutions. Fix $\theta\in(0,\pi/2)$, let $D^m(\theta)$ be the following domain in $\mathbb{S}^{n}=\{y\in \mathbb{R}^{n+1}: |y|=1\}$:
\begin{equation} \notag
D^m(\theta)=
\begin{cases}
\{ y_1\geq \cos\theta\},
\quad &\textrm{if $m=0$,}
\\
\{ y_{m+2}^2+\cdots +y_{n+1}^2\leq   \sin^2 \theta\},
\quad &\textrm{if $1\leq m\leq n-2$,}
\\
\{ y_{n+1}\leq  \sin \theta\},
\quad &\textrm{if $m=n-1$.}
\end{cases}
\end{equation}

Then $T^m(\theta)=\partial D^m(\theta)$ is either a sphere or a generalized Clifford torus:
\begin{equation} \notag
T^m(\theta)=
\begin{cases}
\{ y_1=\cos\theta\},
\quad &\textrm{if $m=0$,}
\\
\{y_1^2+\cdots+y_{m+1}^2=\cos^2 \theta, \  y_{m+2}^2+\cdots +y_{n+1}^2=  \sin^2 \theta\},
\quad &\textrm{if $1\leq m\leq n-2$,}\\
\{ y_{n+1}=  \sin \theta\},
\quad &\textrm{if $m=n-1$.}
\end{cases}
\end{equation}

Take $M_1=\mathbb{S}^{n}\setminus D^m(\theta)$, $M_2=D^m(\theta)$ and $f=y_{n+1}$.
Then
\[
\begin{cases}
\nabla^2 f+fg=0 & \textrm{in $M_1$},
\\
\frac{\partial f}{\partial \nu_1}+(\cot\theta) f=0 & \textrm{on $\partial M_1$},
\end{cases}
\quad\quad
\begin{cases}
\nabla^2 f+fg=0 & \textrm{in $M_2$},
\\
\frac{\partial f}{\partial \nu_2}-(\cot\theta) f=0 & \textrm{on $\partial M_2$},
\end{cases}
\]
where $\nu_1,\nu_2$ are outward unit normals on $\partial M_1$ and $\partial M_2$ respectively. For
simplicity, we shall use $D^{m}(\theta)$ and $T^{m}(\theta)$ to denote all domains differing by an $SO(n+1)$ action fixing $y_{n+1}$-axis.

With notations as above, we can state the rigidity theorem for $a>0$.

\begin{theorem}[$a>0$]\label{thm.positive}
Let $(M,g)$ be a smooth connected $n$-dimensional compact Riemannian manifold with boundary.
Suppose $f\in C^{\infty}(M)$ is a non-constant function satisfying (\ref{eq.robin}) and denote
$a=\cot \theta>0, \theta\in (0,\pi/2)$. Then $g$ is the spherical metric, i.e., $sec_g\equiv 1$.

Moreover, suppose each boundary component $S$ has constant mean curvature, then $S$ is isometric to
either $T^{m}(\theta)$ ($m\neq n-2$) or a $k$-fold cover of $T^{n-2}(\theta)$. Denote the index $m$ as
$m(S)$.  We have the following more concrete descriptions:
\begin{itemize}
\item[(i)]
If $f|_{\partial M}$ is constant, then $\partial M$ is connected and isometric to $T^{n-1}(\theta)$, and
$(M, g)$ is isometric to $\mathbb{S}^n\setminus D^{n-1}(\theta)$, a geodesic ball of radius $\pi/2-\theta$ in $\mathbb{S}^n$.
(referred to also as a spherical cap)

\item[(ii)]
If $f|_{\partial M}$ is not constant, then $m(S)<n-1$ for any boundary component and one of the following holds:
\begin{itemize}
\item[(a)]
$n\geq 3$, if $m(S)<n-2$ for all boundary components, then
$(M,g)$ is isometric to a spherical domain:
\[
(M,g, f)=\left(\mathbb{S}^n\setminus \bigsqcup_{i=1}^l D^{m_i}(\theta), g_{\mathbb{S}^n}, Ly_{n+1}\right),
\]
where $l$ is the number of boundary components and $m_i<n-2$ for $i=1,...,l$.
\item[(b)]
$n\geq 3$, if there exists a boundary component $S$ such that $m(S)=n-2$, then any other boundary component $S'$
(if exists) must be isometric to $T^{0}(\theta)$ with constant principal curvature $-a$. Gluing small spherical caps
$D^0(\theta)$ along all such $S'$, we obtain a manifold $(\widetilde{M}, \widetilde{g})$, which is a $k$-fold isometric
covering of $\mathbb{S}^n \setminus D^{n-2}(\theta)$. $k$ corresponds to the number of interior maximum points of $f$.
\item[(c)]
$n= 2$, $(M, g)$ is a surface of constant Gaussian curvature $1$ and each boundary curve is of constant geodesic curvature
 $-\cot \theta$ and of length $2\pi k \sin\theta$, where $k=k(S)$ is the number of maximum points of $f$ restricted on $S$.
\end{itemize}
\end{itemize}
\end{theorem}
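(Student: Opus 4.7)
The proof naturally splits into (A) establishing $\sec_g\equiv 1$ (the first assertion), and (B) classifying each boundary component under the CMC hypothesis.

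For (A), the key observation is that the Robin condition with $a>0$ forces an interior extremum of $f$: at a boundary maximum one has $\partial_\nu f\ge 0$, hence $f\le 0$ by $\partial_\nu f=-af$; dually at a boundary minimum. Since $f\not\equiv 0$, at least one extremum is interior, and replacing $f$ by $-f$ if necessary I may assume there is an interior maximum $p_0$ with $f(p_0)=\sqrt c>0$, where $c=|\nabla f|^2+f^2$ is the standard conserved quantity for the Obata equation. In geodesic polar coordinates $(t,\omega)$ around $p_0$, the ODE $h''+h=0$ for $h(t)=f(\exp_{p_0}(tv))$ with $h(0)=\sqrt c$, $h'(0)=0$ yields $f=\sqrt c\cos t$, and plugging this into the angular part of the Obata equation reduces to $\partial_t g_t=2\cot t\,g_t$; smoothness at $p_0$ normalizes the solution to $g_t=\sin^2 t\cdot g_{\mathbb{S}^{n-1}}$. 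Hence the metric is standard spherical on the full normal neighborhood of $p_0$. Developing along geodesics then extends this to a local isometry $M\to\mathbb{S}^n$, proving $\sec_g\equiv 1$; after normalization we may write $f=Ly_{n+1}$ on the developed image.

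For (B), the Robin condition becomes the pointwise geometric constraint $\nu_{n+1}=-\cot\theta\cdot y_{n+1}$ on each boundary component $S$, and squaring gives $|\nabla^S y_{n+1}|^2=1-\csc^2\theta\cdot y_{n+1}^2$, so $|y_{n+1}|\le\sin\theta$ on $S$ with equality only at the critical set of $y_{n+1}|_S$. A tangential decomposition of the ambient Hessian of $f$ yields the driving equation
\[
\mathrm{Hess}^S(f|_S)=-f(g_S+a\,II),
\]
which together with the CMC hypothesis lets me classify $S$. Case (i) is immediate: if $f|_{\partial M}$ is constant, then $y_{n+1}|_S=\pm\sin\theta$ on each component, so $S$ is a slice $T^{n-1}(\theta)$; the Robin sign fixes orientation, and a short ``zonal annulus'' argument (two parallel slices on opposite sides are incompatible for a single $a>0$) leaves $M=\mathbb{S}^n\setminus D^{n-1}(\theta)$. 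In case (ii), each $S$ has $y_{n+1}|_S$ nonconstant; following the flow of $\nabla^S y_{n+1}$ together with the displayed equation and CMC identifies $S$ with some $T^m(\theta)$ ($m<n-1$) or with a $k$-fold cover of $T^{n-2}(\theta)$, and the integer $m(S)$ records the dimension of the critical set at the top level $y_{n+1}=\sin\theta$. Subcase (a), with all $m(S)<n-2$, embeds $M$ directly as the listed spherical domain; subcase (b) is handled by lifting along integral curves of $\nabla f$, capping $T^0(\theta)$-type components by $D^0(\theta)$ to form $\widetilde M$ as a $k$-fold cover of $\mathbb{S}^n\setminus D^{n-2}(\theta)$, with $k$ matching the number of interior maxima of $f$; subcase (c) in dimension two integrates a unit-speed reparametrization of $\nabla^S y_{n+1}$ around the boundary curve to extract length $2\pi k\sin\theta$.

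I expect subcase (b) to be the main obstacle: one must promote the CMC-plus-Robin data on components reaching both pole levels $y_{n+1}=\pm\sin\theta$ to the global statement that the smoothed $\widetilde M$ is precisely a $k$-fold covering of $\mathbb{S}^n\setminus D^{n-2}(\theta)$ and not some more exotic branched object, and match the covering multiplicity $k$ exactly to the number of interior maxima of $f$. Tracking the interplay between the interior critical set of $f$, the topology of $\partial M$, and possible non-injectivity of the developed map is where the most careful bookkeeping will be required.
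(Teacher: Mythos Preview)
Your outline tracks the paper's strategy closely (interior extremum $\Rightarrow$ local spherical metric via geodesic polar coordinates; boundary Hessian identity plus CMC to classify $S$; then the case split), and you correctly anticipate that subcase~(b) carries the heaviest bookkeeping. However, the sketch of part~(B) hides the actual technical content of the argument, and in one place your route diverges from what the paper can justify.

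\textbf{The boundary classification is not a one-line consequence of the Hessian identity.} You write that ``following the flow of $\nabla^S y_{n+1}$ together with the displayed equation and CMC identifies $S$ with some $T^m(\theta)$,'' but this step is the core of the proof and requires real work. The paper's mechanism is: differentiate the relation $h_{ij}f_j=-af_i$ tangentially, substitute the boundary Hessian identity, and use the Codazzi equation (available because $\sec_g\equiv 1$) to obtain a first-order ODE for each principal curvature $\lambda_i(s)$ along an integral curve of $\bar\nabla f/|\bar\nabla f|$. Solving this ODE explicitly gives either $\lambda\equiv -a$ or a one-parameter family $\lambda(s)=(a\mu\cos(\sqrt{1+a^2}s)+1)/(a-\mu\cos(\sqrt{1+a^2}s))$; only here does the CMC hypothesis enter, forcing $\mu=0$ and hence the two constant principal curvatures $-a$ and $1/a$ with multiplicities $n-1-m$ and $m$. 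From constant principal curvatures to $T^m(\theta)$ the paper invokes Cartan's classification of isoparametric hypersurfaces with $g\le 2$ in $\mathbb{S}^n$, after first \emph{doubling} $X_+$ across $M_0$ (which is totally geodesic) to get a genuine closed hypersurface. Your displayed equation alone, even with CMC, does not yield constant principal curvatures without the ODE step, and it does not give the global $T^m(\theta)$ identification without the isoparametric input. (Also, a sign: the identity reads $\mathrm{Hess}^S f=-f(g_S-a\,II)$, not $+a\,II$.)

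\textbf{Subcase (a) does not follow from a ``direct embedding.''} Your part~(A) only produces $\sec_g\equiv 1$ and a local isometry (equivalently, each $X_\pm$ embeds in a hemisphere); it does not by itself give a global embedding of $M$ into $\mathbb{S}^n$, since a priori different $X_\pm$'s could glue nontrivially. The paper closes this gap with a separate gluing theorem: once each $S$ is known to be isometric to $T^m(\theta)$ with the correct second fundamental form, one caps it with $(D^m(\theta),Ly_{n+1})$ smoothly, obtains a closed manifold satisfying the Obata equation, and then invokes Obata's theorem to conclude it is $\mathbb{S}^n$. Without this gluing-and-closing step (or an independent argument that the developing map is injective), the conclusion of subcase~(a) is unjustified.
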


Ren-Xu~\cite{RX} obtained a Lichnerowicz type estimate for the first non-zero
eigenvalue of the Laplacian with the Robin boundary condition on manifolds with $\mathrm{Ric}_g\geq (n-1)g$.
As an application of Theorem \ref{thm.positive}, we characterize the equality case of their estimates.

\begin{corollary}\label{C1} Let $(M,g)$ be an $n$-dimensional compact manifold with boundary, suppose $\mathrm{Ric}_g\geq (n-1)g$
and the second fundamental form $h\geq -2a g$ ($a>0$) with the mean curvature $H\geq (n-1)/a$. Let $\xi_1$ be the first eigenvalue
of the Laplacian with Robin boundary condition
\begin{align}  \notag
\left\{
  \begin{array}{ll}
    \Delta f+\xi_1 f=0, & \quad\textrm{in $M$}, \\
   \frac{\partial f}{\partial \nu}+af=0, &\quad\textrm{on $\partial M$}.
  \end{array}
\right.
\end{align}
Then $\xi_1\geq n$ and the equality holds if and only if $(M, g)$ is isometric to a spherical cap.
\end{corollary}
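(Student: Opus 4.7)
The plan is to combine a Bochner-Reilly integral argument with the rigidity result of Theorem~\ref{thm.positive}. I start by applying the Bochner formula to the first Robin eigenfunction $f$ and integrating over $M$. The divergence theorem produces a boundary term $\int_{\partial M}\nabla^2 f(\nabla f,\nu)$, the pointwise inequality $|\nabla^2 f|^2 \geq (\Delta f)^2/n$ handles the Hessian-squared term, and $\mathrm{Ric} \geq (n-1)g$ controls the Ricci contribution. Combined with $\Delta f = -\xi_1 f$, these ingredients yield
\begin{equation*}
\frac{\xi_1^2}{n}\int_M f^2 \;\leq\; \int_{\partial M}\nabla^2 f(\nabla f,\nu) \;+\; (\xi_1 - n + 1)\int_M|\nabla f|^2.
\end{equation*}

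The substantive step is to estimate the boundary integral using the Robin condition. Decomposing $\nabla f = \nabla^T f - af\,\nu$ along $\partial M$, differentiating $\partial_\nu f = -af$ tangentially gives $\nabla^2 f(\nabla^T f,\nu) = -a|\nabla^T f|^2 - h(\nabla^T f,\nabla^T f)$, and the standard splitting $\Delta f = \Delta_{\partial M}f + \nabla^2 f(\nu,\nu) + H\,\partial_\nu f$ handles the normal part. After integrating by parts on $\partial M$,
\begin{equation*}
\int_{\partial M}\nabla^2 f(\nabla f,\nu) = -2a\!\int_{\partial M}\!|\nabla^T f|^2 - \int_{\partial M}\! h(\nabla^T f,\nabla^T f) + a\xi_1\!\int_{\partial M}\! f^2 - a^2\!\int_{\partial M}\! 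Hf^2.
\end{equation*}
The hypothesis $h \geq -2ag$ makes the first two integrals combine to something nonpositive, while $H \geq (n-1)/a$ bounds the remaining two by $a(\xi_1 - n + 1)\int_{\partial M}f^2$. Substituting back together with the identity $\int_M|\nabla f|^2 = \xi_1\int_M f^2 - a\int_{\partial M} f^2$, the boundary contributions cancel exactly and the main inequality collapses to $\xi_1^2/n \leq \xi_1(\xi_1-n+1)$, i.e., $\xi_1 \geq n$.

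For the rigidity statement, the equality $\xi_1 = n$ saturates the Cauchy-Schwarz step and forces $\nabla^2 f + fg = 0$, so $f$ satisfies (\ref{eq.robin}) with the given $a > 0$; unique continuation for the analytic Obata equation together with the Robin condition then upgrades the $H$-inequality to $H \equiv (n-1)/a$ on each boundary component, so Theorem~\ref{thm.positive} identifies $(M,g)$ with one of the listed spherical models. A direct shape-operator calculation on $T^m(\theta) \subset \mathbb{S}^n$, with respect to the outward normal of $\mathbb{S}^n \setminus D^m(\theta)$, produces principal curvatures $\tan\theta = 1/a$ (multiplicity $m$) and $-\cot\theta = -a$ (multiplicity $n-1-m$), hence $H = m/a - (n-1-m)a$; the requirement $H = (n-1)/a$ rearranges to $(n-1-m)(1+a^2) \leq 0$ and forces $m = n-1$, singling out case (i) and showing $(M,g)$ is a spherical cap. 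The chief obstacle is the boundary bookkeeping in the second step: the hypotheses on $h$ and $H$ are finely tuned against the Robin relation so that the four boundary integrals cancel exactly against the $-a\int_{\partial M} f^2$ piece of $\int_M|\nabla f|^2$, and keeping sign conventions for the shape operator, outward normal, and the sign of $a$ consistent is where one must be most careful.
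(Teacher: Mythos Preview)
Your argument is correct and is essentially the paper's proof: the paper invokes Reilly's formula directly, which is nothing other than the integrated Bochner identity with the boundary terms already packaged, while you unpack those boundary terms by hand via $\nabla^2 f(\nabla f,\nu)$; the resulting inequalities and the use of $h\geq -2ag$, $H\geq (n-1)/a$ are identical. Your rigidity step is also the same as the paper's ``routine check'' through Theorem~\ref{thm.positive}, only made explicit by computing $H$ on each $T^m(\theta)$; one small remark is that the passage from ``$H=(n-1)/a$ where $f\neq 0$'' to ``$H\equiv (n-1)/a$'' does not need unique continuation---it follows because $|\bar\nabla f|^2+(1+a^2)f^2=L^2$ forces the boundary zero set of $f$ to be a regular hypersurface, hence nowhere dense, and continuity of $H$ does the rest.
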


For $a<0$, we have the following:
\begin{theorem}[$a<0$]\label{thm.negative}
Let $(M,g)$ be a smooth connected compact $n$-dimensional Riemannian manifold with boundary.
Assume $f\in C^{\infty}(M)$ is a non-constant function satisfying equation (\ref{eq.robin}) and denote
$a=\cot \theta<0, \quad\theta\in (\pi/2,\pi)$.
Set $M_0=\{p\in M: f(p)=0\}$, $\tilde{g}_0=g|_{M_0}$ and $\tilde{\nabla}$ the Levi-Civita connection w.r.t. $\tilde{g}_0$.
We then have following characterization of $(M,g)$:
\begin{itemize}
\item [(i)]
If $f$ is constant on some boundary component, then
$(M, g)$ is isometric to either a geodesic ball of radius ${3\pi}/{2}-\theta$ in $\mathbb{S}^n$ or
the warped product metric
\[
g= dt^2+(\cos t)^2 \tilde{g}_0, t\in[\pi/2-\theta, \theta-\pi/2].
\]
\item[(ii)]
If $f$ is not constant on any boundary component, then $\partial M$ is connected.
Moreover, $M$ is a $\mathbb{Z}_2$-symmetric domain in the warped product space
$$
M_0 \times  [\pi/2-\theta, \theta-\pi/2]_t, \quad g=dt^2+ (\cos t)^2 \tilde{g}_0,
$$
which is bounded by the graphs of $\pm\phi$, where $\phi\in C^{\infty}(\mathring{M}_0)\cap C(M_0)$ satisfies
\begin{align}\notag
\frac{\cos \phi}{\sqrt{1+(\cos\phi)^{-2}|\tilde{\nabla} \phi|_{\tilde{g}_0}^2}} +a\sin\phi=0, \quad\textrm{in $\mathring{M}_0$},
\\ \notag
\textrm{$\phi>0$ in $\mathring{M}_0$ and $\phi=0$ on $\partial M_0$.}
\end{align}
\end{itemize}
\end{theorem}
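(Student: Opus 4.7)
The plan is to exploit the rigid warped-product structure that the Obata equation $\nabla^2 f+fg=0$ forces on $(M,g)$, and then read off the position of $\partial M$ from the Robin condition. My first step is to extract the pointwise identity $|\nabla f|^2+f^2\equiv L^2$ (for some constant $L>0$) by differentiating the equation; this shows that interior critical points of $f$ occur only where $f=\pm L$, and that $M_0=\{f=0\}$, if non-empty, is a smooth embedded hypersurface on which $|\nabla f|=L$. Since $\nabla^2 f$ vanishes on $M_0$, the hypersurface $M_0$ is totally geodesic. On any connected region where $\nabla f\ne 0$ I flow along $\nabla f/|\nabla f|$ and let $t$ denote the signed arclength from $M_0$; a Tashiro-type calculation reduces the Obata equation to $f=L\sin t$ together with the warped metric
\[
g=dt^2+(\cos t)^2\,\tilde g_0,\qquad \tilde g_0=g|_{M_0},\qquad t\in(-\pi/2,\pi/2),
\]
where $t=\pm\pi/2$ corresponds to possible critical values $f=\pm L$.

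For part (i), I assume $f\equiv c$ on some boundary component $\Sigma$. First I rule out $c=0$: otherwise $\Sigma\subset M_0$, and since $\nabla f$ is non-zero and normal to $\Sigma$, the Robin condition forces $0=\partial f/\partial\nu=\pm L$, impossible. Hence $\Sigma$ is a regular level set. If $M_0=\emptyset$, then $f$ has constant sign, say $f>0$, and must attain its maximum $L$ at an interior point $p_0$. Geodesic polar coordinates based at $p_0$ identify $(M,g)$ with a geodesic ball in $\mathbb{S}^n$ with $f=L\cos r$; the Robin condition at $\{r=R\}$ reduces to $\tan R=a$, whose solution in $(\pi/2,\pi)$ is $R=3\pi/2-\theta$. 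If $M_0\ne\emptyset$, I use the $t$-coordinate above, and the Robin condition forces $\Sigma$ to sit at a level value determined by $\theta$. A short symmetry argument (based on the total geodesicity of $M_0$ and the fact that any further boundary component, by the same reasoning, must be a symmetric level set) then identifies $M$ with the full warped cylinder over $M_0$.

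For part (ii), the hypothesis that $f$ is non-constant on every boundary component implies that $f$ changes sign and hence $M_0\ne\emptyset$. The central step is the $\mathbb{Z}_2$-symmetry of $M$ across $M_0$: the reflection $\sigma(x,t)=(x,-t)$ is an isometry of the ambient warped product $M_0\times(-\pi/2,\pi/2)$ that satisfies $f\circ\sigma=-f$ and reverses the outward normal, so $\sigma$ sends Obata--Robin solutions to Obata--Robin solutions. A connectedness/unique-continuation argument then concludes $\sigma(M)=M$. Therefore $\partial M$ decomposes as the union of two graphs $\{t=\pm\phi(x)\}$ with $\phi\ge 0$ over a subdomain of $M_0$. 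Writing the unit outward normal on the upper graph, with respect to the warped metric, as
\[
\nu=\frac{\partial_t-(\cos\phi)^{-2}\tilde\nabla\phi}{\sqrt{1+(\cos\phi)^{-2}|\tilde\nabla\phi|_{\tilde g_0}^2}},
\]
and substituting $f=L\sin t$ into $\partial f/\partial\nu+af=0$ produces precisely the nonlinear equation for $\phi$ stated in the theorem. The regularity claim and the boundary condition $\phi=0$ on $\partial M_0$ follow from the smoothness of $\partial M$ together with the observation that the upper and lower graphs can only meet at $\partial M_0$.

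The main obstacle will be establishing the $\mathbb{Z}_2$-symmetry of $M$ across $M_0$. It is not enough to merely invoke the ambient isometry $\sigma$; one must rule out \emph{asymmetric} Obata--Robin domains sitting inside the warped product. My plan is to exploit the compatibility of $\sigma$ with both sides of the boundary-value problem (the Obata equation is preserved by $f\mapsto -f$, and the Robin condition is preserved once the outward normal is flipped), together with unique continuation for real-analytic solutions of an elliptic equation, to conclude $\sigma(M)=M$ once agreement is established along $M_0$.
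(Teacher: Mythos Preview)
Your overall architecture (warped product via the gradient flow, then locate $\partial M$ from the Robin condition) matches the paper's, but there are two genuine gaps.

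\textbf{Part (i).} Your case split on whether $M_0$ is empty does not line up with the geodesic-ball versus warped-cylinder dichotomy. In fact $M_0=\emptyset$ never occurs when $a<0$: if, say, $f>0$ everywhere, then the backward flow of $\nabla f/|\nabla f|$ from any point can neither reach a critical point (the only critical values are $\pm L$, and $f$ is strictly decreasing from a value $<L$) nor exit through $\partial M$ (Robin with $a<0$ and $f>0$ forces $\partial f/\partial\nu>0$, while at an exit point one needs $\partial f/\partial\nu\le 0$), contradicting compactness. The geodesic ball of radius $3\pi/2-\theta>\pi/2$ \emph{does} contain $M_0$ (an equatorial sphere), so it falls into your second sub-case, where your ``short symmetry argument'' only produces the symmetric cylinder and therefore misses it entirely. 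The paper's dichotomy is different and sharper: it shows that $f$ has an interior critical point \emph{if and only if} $\partial M$ is connected with $f|_{\partial M}$ constant, and this is exactly the geodesic ball; otherwise a second boundary component exists, and a minimal-distance flow argument from $S$ to $\partial M\setminus S$ yields the symmetric cylinder.

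\textbf{Part (ii).} You correctly flag the $\mathbb{Z}_2$-symmetry as the crux, but the proposed tool---unique continuation for real-analytic solutions of an elliptic equation---does not apply. The solution $f=L\sin t$ extends to the whole ambient warped product, so unique continuation for $f$ says nothing about where $\partial M$ sits; the only constraint on $\partial M$ is the Robin condition, which translates into the \emph{first-order} equation for $\phi$ (equivalently, $|\tilde\nabla\phi|^2$ equals a prescribed function of $\phi$). This is eikonal-type, not elliptic. The paper instead writes the upper and lower pieces of $\partial M$ as graphs $\phi_+$ and $\phi_-$, both solving this equation with $\phi_\pm|_{\partial M_0}=0$, and proves $\phi_+=\phi_-$ by a direct maximum-principle computation: at an interior maximum of $\phi_+-\phi_-$ one has $\tilde\nabla\phi_+=\tilde\nabla\phi_-$, hence $|\tilde\nabla\phi_+|^2=|\tilde\nabla\phi_-|^2$, while the equation makes $|\tilde\nabla\phi|^2$ a strictly monotone function of $\phi$ on $(0,\pi-\theta)$, a contradiction. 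You should also justify the embedding of $M$ into the warped product: the paper does this by showing that every flow line of $\pm\nabla f/|\nabla f|$ reaches $M_0$ before hitting $\partial M$, so each $M_c$ injects into $M_0$ and $M$ is genuinely a subdomain of $M_0\times[\pi/2-\theta,\theta-\pi/2]$.
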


The idea for analyzing the Obata equation has two-folds: the existence of a non-constant $f$ whose hessian is proportional to the metric
implies a local warped product structure between two regular level sets of the function (c.f. \cite{P}); an isolated critical point (if exist) helps to
determine both the topology and the metric via the gradient flow of $f$.

Working on the manifolds with boundary, the sign of $a$ in the Robin condition plays a role. If $a$ is positive, then there exist both interior maximum and minimum. It follows
that the manifold has spherical metric (constant positive sectional curvature). To determine the boundary, we analyze the second fundamental form carefully.
The discussion leads to an interesting connection with isoparametric hypersurfaces in spheres.

On the other hand, if $a<0$ then the global maximum and minimum may not occur
in the interior. Therefore only warped product structure is to be expected. Nevertheless, the restriction of $f$ to the boundary is a transnormal function, which
controls the geometric and topological structure to some extent.

Another issue complicating the proof is that there might be multiple
boundary components. This can be handled by the behavior of $f$ along its gradient flow lines together with the Robin boundary condition.

We also consider the Obata equation with non-vanishing Neumann boundary condition:
\begin{equation}\label{eq.neumann1}
\begin{cases}
\nabla^2 f+fg=0 &\quad \textrm{in $M$},
\\
\frac{\partial f}{\partial \nu}=1 &\quad \textrm{on $\partial M$}.
\end{cases}
\end{equation}
This serves as a complementary example for the third possibility of the appearance of the maximum and minimum. In this case,
only global minimum is achieved in the interior. This actually simplifies the matter.

\begin{theorem}\label{thm.neumann1}
Let $(M,g)$ be a smooth compact $n$-dimensional Riemannian manifold with boundary and $f\in C^{\infty}(M)$ satisfies equation (\ref{eq.neumann1}).
Then
\begin{itemize}
  \item[(i)] if $f|_{\partial M}=const$, then $(M, g)$ is isometric to a spherical cap;
  \item[(ii)] if $f|_{\partial M}\neq const$, then $(M, g)$ is isometric to the standard hemisphere.
\end{itemize}
\end{theorem}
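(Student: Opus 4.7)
The plan is to begin with the classical identity $|\nabla f|^2 + f^2 \equiv C$, a constant on $M$, derived by differentiating and invoking $\nabla^2 f = -fg$. Because $\partial f/\partial\nu = 1 > 0$ on $\partial M$, $f$ strictly increases in the outward normal direction at every boundary point, hence decreases as one moves inward; no boundary point can be a local minimum of $f$ on $M$, so the global minimum is attained at an interior point $p^-$, with $\nabla f(p^-) = 0$ and $f(p^-) = -\sqrt{C}$. The Obata equation then forces $(M,g)$ to have constant sectional curvature $1$, and in geodesic polar coordinates $(\rho,\xi)$ centered at $p^-$ one has $g = d\rho^2 + \sin^2\rho\, g_{\mathbb{S}^{n-1}}$ and $f = -\sqrt{C}\cos\rho$.

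Next I would rule out any interior maximum of $f$ by an open--closed dichotomy on the unit tangent sphere at $p^-$: an interior point $p^+$ with $f(p^+) = \sqrt{C}$ would satisfy $d(p^-,p^+) = \pi$, and the set of unit directions at $p^-$ whose radial geodesics stay in the interior up to time $\pi$ and the set of those exiting $\partial M$ transversally at some time $<\pi$ would each be nonempty (the first contains the velocity of a minimizing $p^-$-to-$p^+$ geodesic, the second exists because $\partial M \neq \emptyset$) and open, contradicting the connectedness of $S^{n-1}$. Hence $p^-$ is the unique interior critical point. In case (i), where $f|_{\partial M}=c$ is constant, $\nabla^{\partial M} f$ vanishes on $\partial M$, so $\nabla f = \nu$ and $|\nabla f|^2 = 1$ there, giving $C = 1+c^2$; the boundary is the geodesic sphere $\{\rho = \arccos(-c/\sqrt{1+c^2})\}$, and $M$ is the closed geodesic ball of this radius, a spherical cap.

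For case (ii), $f|_{\partial M}$ is non-constant; at its critical points $\nabla f = \nu$ yields $f^2 = C-1$, so $\max_{\partial M} f = -\min_{\partial M} f = f_0 := \sqrt{C-1}$, and on $\partial M$ one has $|\nabla^{\partial M} f|^2 + f^2 = f_0^2$. The tangential part of the ambient Hessian equation reads $\nabla^2_{\partial M} f + f g_{\partial M} = -II$, and differentiating $\langle \nabla f, \nu\rangle = 1$ along $\partial M$ gives $II(\cdot, \nabla^{\partial M} f) = 0$. The aim is to show $II \equiv 0$; once this is established, $f|_{\partial M}$ satisfies the intrinsic Obata equation on $\partial M$ with conservation constant $f_0^2$, so $(\partial M, g_{\partial M})$ is the standard round $(n-1)$-sphere, the boundary is totally geodesic, and then $M$ --- a domain of constant sectional curvature $1$ with totally geodesic $(n-1)$-sphere boundary containing the interior pole $p^-$ --- must be a hemisphere.

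The main technical step is proving $II \equiv 0$ in case (ii). My preferred route is to identify $M$ isometrically with a domain of $\mathbb{S}^n$ via $\exp_{p^-}$, sending $p^-$ to a pole and $f$ to $\sqrt{C}\, y_{n+1}$; the Neumann condition then becomes $\langle \nu, e_{n+1}\rangle \equiv 1/\sqrt{C}$ on $\partial M \subset \mathbb{S}^n$. Combining this constant-angle property with the non-constancy of $y_{n+1}|_{\partial M}$ (which excludes $\partial M$ from being a level hypersurface of $y_{n+1}$) and the classification of closed embedded hypersurfaces in $\mathbb{S}^n$ on which the unit normal makes a constant angle with a fixed direction --- the only such hypersurfaces are level sets of the height function and great hyperspheres --- one concludes $\partial M$ is a great $(n-1)$-sphere, which gives $II \equiv 0$ and completes the argument.
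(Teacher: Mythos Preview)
Your overall architecture matches the paper's: establish the conservation law, locate a unique interior minimum, embed $M$ as a star-shaped domain in $\mathbb{S}^n$, and then argue that $\partial M$ is totally geodesic. There are, however, two gaps.

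The minor one: from ``no interior maximum'' you conclude that ``$p^-$ is the unique interior critical point.'' Ruling out $C_+$ only tells you that every interior critical point is a global minimum; it does not by itself exclude several minima $q_{-1},\dots,q_{-m}$. The paper handles this by showing that every interior point flows (under $-\nabla f/|\nabla f|$) to some $q_{-i}$ before reaching $\partial M$, so $\mathring{M}=\bigsqcup_i N_i$ with each basin $N_i$ open; connectedness then forces $m=1$. Without this step your identification $M\cong\exp_{p^-}(\Omega)$ is not justified.

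The substantive gap is the last step. You reduce case (ii) to: a closed embedded hypersurface $\Sigma\subset\mathbb{S}^n$ whose unit normal satisfies $\langle\nu,e_{n+1}\rangle\equiv 1/\sqrt{C}$ and on which $y_{n+1}$ is non-constant must be a great hypersphere. You then invoke this as a known ``classification of constant-angle hypersurfaces in $\mathbb{S}^n$.'' That classification is not standard, and in fact it is equivalent to the very statement you are trying to prove: translating back, the constant-angle condition is exactly $\partial f/\partial\nu=1$, and the conclusion $II\equiv 0$ is the content of Theorem~\ref{thm.neumann1}(ii). The paper does not appeal to any such black box; it proves $II\equiv 0$ directly by deriving an ODE for each principal curvature $\lambda_i$ along the integral curves of $\bar\nabla f/|\bar\nabla f|$,
\[
\sqrt{L^2-1}\,\cos s\;\lambda_i'(s)\;-\;\sqrt{L^2-1}\,\sin s\;\lambda_i(s)\;=\;\lambda_i^2(s),
\]
and observing that $\bigl(\lambda_i(s)\cos s\bigr)'=\lambda_i^2(s)/\sqrt{L^2-1}\ge 0$ together with the vanishing of $\lambda_i(s)\cos s$ at the focal endpoints $s=\pm\pi/2$ forces $\lambda_i\equiv 0$. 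Unless you can supply an independent proof (or a reference) for the constant-angle classification, your argument for case (ii) is circular at the key point.
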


Finally, we would like to point out that the assumption that each boundary component has constant mean curvature in Theorem~\ref{thm.positive} is for the simplicity of the exposition.
This technical assumption is only needed for Proposition~\ref{P3.2} and its subsequent propositions. Without this assumption, we may run into a lengthy discussion of isoparametric hypersurfaces
with $g=3,4,6$ distinct principal curvatures (the reader is referred to the remark after Proposition~\ref{P3.2} for details). We shall address this issue in a forthcoming paper.
We also remark that one can study generalized Obata equations $\nabla^2 f+h(f)g=0$ (c.f.~\cite{WY}) on manifolds with boundary. Almaraz and Barbosa~\cite{AB} also studied the Obata equation with Robin
boundary condition under another assumption on the mean curvature of the boundary, based on which they obtained Corollary~\ref{C1} as well.

An outline of this paper is as follows:
In Section \ref{sec.gen}, we set up notations and make some general discussion of the manifold which is independent of the sign of $a$.
In Section \ref{sec.pos}, we treat the equation (\ref{eq.robin}) with $a>0$ and prove Theorem \ref{thm.positive}.
In Section \ref{sec.neg}, we treat the equation (\ref{eq.robin})  with $a<0$ and prove Theorem \ref{thm.negative}.
In Section \ref{sec.neu}, we consider equation (\ref{eq.neumann1}) and prove Theorem \ref{thm.neumann1}.
In Appendix \ref{sec.app}, we prove a gluing theorem. This is helpful in our proof for main theorems and might be of independent
interest.

{Acknowledgment: The authors wish to thank Prof. Jianquan Ge, Prof. Guoqiang Wu, Prof. Wenjiao Yan for helpful discussions. The authors thank Prof. Chao Xia for pointing out an inaccuracy in an earlier version of this paper.}

\section{Preliminary}\label{sec.gen}
In this section, we set up notations and prove some preliminary results.
Let $(M^n,g)$ be a smooth connected compact manifold with boundary
and $f$ be a non-constant function satisfying equation (\ref{eq.robin}).

\begin{itemize}
\item[(i)]
\textbf{Ambient Manifold.}
We use $R_{ijkl}$, $\nabla$, $\Delta_g$, etc., for terms associated with $g$ and $:$ for covariant derivative w.r.t. $g$.

\item[(ii)]
\textbf{Boundary.}
We use $\bar{R}_{ijkl}$, $\bar{\nabla}$, $\Delta_{\bar{g}}$, etc., for terms associated with $\bar{g}$, the induced metric on the boundary and
use $;$ for covariant derivative w.r.t. $\bar{g}$ on the boundary.

Denote by $h$ the second fundamental form of $\partial M$ w.r.t. the outward unit normal $\nu$.
Choose an orthonormal frame $\{e_1, ..., e_{n}\}$ with $e_n=\nu$ near the boundary, our convention for the second fundamental form is
\[
h(e_i,e_j)
=\langle \nabla_{e_i}e_n, e_j\rangle_g
=  -\langle \nabla_{e_i}e_j, e_n \rangle_g.
\]

\item[(iii)]
\textbf{Level Sets and Components.}
Let $M_c=\{p\in M: f(p)=c\}$ and $X_{+} (X_{-})$ be a connected component of $\{f>0\} (\{f<0\})$.

\item[(iv)]
\textbf{Boundary Level Sets and Components.}
We denote by $S$, a connected component $\partial M$. Let $S_c=\{p\in S: f(p)=c\}$ and $S_{+} (S_{-})$ be a connected component of $\{f>0\} (\{f<0\})$ in $S$.

\item[(v)]
\textbf{Global Max/Min and Boundary Max/Min.}
Denote $C_{+}=\{ f=\max_{x\in M} f(x)\}$ and $C_{-}=\{ f=\min_{x\in M} f(x)\}$. Let $\bar{f}$ be the restriction of $f$ on a boundary component $S$ and denote $\Sigma_{+}=\{ \bar{f}=\max_{x\in S} \bar{f}(x) \}$,
$\Sigma_{-}=\{\bar{f}=\min_{x\in S} \bar{f}(x)\}$.
\end{itemize}

\begin{lemma}\label{l2.1}
Suppose $f$ satisfies (\ref{eq.robin}), then there exists a constant $L>0$ such that
\[
|\nabla f|^2 +f^2 =L^2.
\]
\end{lemma}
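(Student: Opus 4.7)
The plan is to show that the function $\phi := |\nabla f|^2 + f^2$ has identically vanishing gradient on $M$, hence is a (nonnegative) constant by connectedness, and then use that $f$ is non-constant to ensure this constant is strictly positive. I expect only the interior Obata equation to be needed; the Robin condition on $\partial M$ plays no role in this preliminary identity.

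In indices, $\phi = g^{ij} f_i f_j + f^2$. Differentiating and substituting the Obata equation $f_{ij} = -f\, g_{ij}$ in the form $\nabla_k \nabla_j f = -f\, g_{jk}$, I would compute
\[
\nabla_k \phi \;=\; 2 g^{ij} f_i (\nabla_k \nabla_j f) + 2 f f_k \;=\; 2 f^j(-f g_{jk}) + 2 f f_k \;=\; -2 f f_k + 2 f f_k \;=\; 0.
\]
Since $M$ is connected, this forces $\phi$ to equal some constant $L^2 \ge 0$ throughout $M$.

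Finally, the non-triviality of $L$: if $L = 0$, then $|\nabla f|^2 + f^2 \equiv 0$ on $M$, so $f$ vanishes identically, contradicting the hypothesis that $f$ is non-constant. Hence $L > 0$, as required.

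There is no real obstacle here; the computation is a one-line tensorial identity. The only point worth flagging is conceptual rather than technical: the conservation law $|\nabla f|^2 + f^2 = L^2$ is an artifact of the bulk Obata equation alone, which is why $L$ will serve later as an intrinsic scale for the problem (the value $L$ will coincide with the oscillation amplitude of $f$ along any geodesic tangent to $\nabla f$), independent of the Robin parameter $a$.
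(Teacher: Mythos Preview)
Your proof is correct and essentially identical to the paper's: both differentiate $|\nabla f|^2+f^2$, substitute the Obata equation $\nabla^2 f=-fg$, and obtain zero, with your version written in index notation and the paper's in invariant notation $\nabla_X(|\nabla f|^2+f^2)=2\nabla^2 f(\nabla f,X)+2f\nabla_X f=0$. You are slightly more explicit than the paper in justifying $L>0$ via the non-constancy of $f$, but otherwise the arguments coincide.
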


\begin{proof}
Since
\[
\nabla_X (|\nabla f|^2 +f^2)= 2\nabla^2 f(\nabla f, X)+ 2 f \nabla_X f=-2 f \langle \nabla f, X \rangle +2 f\nabla_X f=0,
\]
the conclusion then follows.
\end{proof}

A direct consequence is that $\forall t\neq \pm L$, $M_t$ is a regular hypersurface.
\begin{lemma} \label{geodesic} Let $(N, g)$ be a compact manifold, suppose $f$ is a smooth function such that
its gradient is an eigenvector of its hessian at each point, then the integral curves of $\frac{\nabla f}{|\nabla f|}$ are geodesics.
\end{lemma}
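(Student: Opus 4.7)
The plan is to reduce the statement to checking that $\nabla_X X = 0$ for the unit vector field $X := \nabla f/|\nabla f|$ on the open set $\{\nabla f \neq 0\}$, since a unit vector field whose integral curves satisfy the geodesic equation automatically produces unit-speed geodesics.

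First I would rewrite the eigenvector hypothesis as an identity between vector fields. Interpreting the Hessian as the endomorphism $Y \mapsto \nabla_Y \nabla f$, the assumption that $\nabla f$ is an eigenvector at each point means there is a smooth function $\mu$ on $\{\nabla f \neq 0\}$ with
\[
\nabla_{\nabla f} \nabla f = \mu\, \nabla f.
\]

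Next, set $u = |\nabla f|$ and expand
\[
\nabla_X X \;=\; \nabla_{\nabla f / u}\!\left(\frac{\nabla f}{u}\right) \;=\; \frac{1}{u^2}\,\nabla_{\nabla f}\nabla f \;+\; \frac{1}{u}\bigl((\nabla f)(1/u)\bigr)\,\nabla f.
\]
Using the eigenvector identity on the first summand and noting that $(\nabla f)(1/u)$ is a scalar, both terms are scalar multiples of $\nabla f$, so $\nabla_X X$ is parallel to $X$. On the other hand, $\langle X, X\rangle \equiv 1$ gives $\langle \nabla_X X, X\rangle = 0$ upon differentiating along $X$. A vector both parallel to $X$ and orthogonal to $X$ must vanish, so $\nabla_X X = 0$. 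Since $X$ is unit, its integral curves are parametrized by arc length, and $\nabla_X X = 0$ is exactly the geodesic equation.

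Honestly, there is no real obstacle here; this is a short pointwise computation. The only mild care is that the computation lives on $\{\nabla f \neq 0\}$, but that is precisely where the integral curves of $X$ are defined, so this restriction is automatic. Compactness of $N$ plays no role in the argument.
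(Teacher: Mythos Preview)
Your proof is correct and follows essentially the same approach as the paper: both compute the covariant derivative of the normalized gradient along itself and use the eigenvector hypothesis to see it vanishes. The only cosmetic difference is that the paper pairs $\nabla_{\nabla f}(\nabla f/|\nabla f|)$ with an arbitrary tangent vector and shows the result is zero directly, whereas you first observe $\nabla_X X$ is parallel to $X$ and then invoke $\langle X,X\rangle\equiv 1$ to kill it; these are the same computation reorganized.
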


\begin{proof} By assumption, there exists $\varphi$ such that
\[
\nabla^2 f(\nabla f, \cdot)=\varphi \langle \nabla f, \cdot\rangle .
\]
Then for any tangent vector $X$,
\begin{align} \notag
\langle \nabla _{\nabla f} \frac{\nabla f}{|\nabla f|}, X\rangle =\frac{\nabla^2 f(\nabla f, X)}{|\nabla f|}-\langle \nabla f, X\rangle  \frac{\nabla^2f(\nabla f, \nabla f)}{|\nabla f|^3}=0.
\end{align}
The conclusion follows.
\end{proof}

By (\ref{eq.robin}), $\nabla f$ is an eigenvector of $\nabla^2 f$, it follows that the integral curves of $\frac{\nabla f}{|\nabla f|}$ are geodesics.

At this point it is worthwhile to briefly sketch the proof of Obata theorem. As this theme of argument prevails throughout the paper: Gradient flow sets
a canonical diffeomorphism between two regular level sets, for which the Obata equation induces a metric O.D.E. Isolated non-degenerate critical points
prescribe the asymptotic behavior of the metric, which helps to solve the metric O.D.E.

More precisely, let $(M, g)$ be a complete smooth Riemannian manifold, suppose there exists $f$ satisfying $\nabla^2 f+fg=0$,
then Lemma~\ref{l2.1} implies $C_{\pm}$ are the only critical sets of $f$, and if exist, they consist of isolated non-degenerate critical points.
Consider the flow of $\frac{\nabla f}{|\nabla f|}$, the flow lines are all geodesics by Lemma~\ref{geodesic}. The restriction of $f$ on each flow line satisfies $f''+f=0$.
Let $\gamma(s)$ be a flow line such that $\gamma(s)\in M_0$. Then $f(\gamma(s))=L \sin s$. The standard Morse theory implies that $f^{-1}(-L, L)$ is diffeomorphic
to $M_0\times(-\frac{\pi}{2}, \frac{\pi}{2})_s$. Moreover, $g$ can be written as $ds^2+ g(s)$, where $g(s)$ is a family of metrics on $M_0$.

Under this coordinate system, we have
\[
\nabla^2 f=\left(
             \begin{array}{cc}
               f''(s) & 0\\
               0 & f'(s) h(s) \\
             \end{array}
           \right)
           \]
where $h(s)$ is the second fundamental form of the level set $M_0\times \{s\}$. Since $h(s)=\frac{1}{2}g'(s)$, the Obata equation thus becomes a metric O.D.E:
\begin{align}\notag
\frac{1}{2}f'(s) g'(s)+f(s) g(s)=0.
\end{align}
As $s\to \pm \frac{\pi}{2}$, the level sets must converge to two isolated critical points, which are smooth manifold points. From this one can solve for $g$ and prove that $M$ is indeed isometric to the
standard sphere.

Working on the manifolds with boundary, we use above idea to obtain following two useful results.

\begin{proposition} \label{Pwarped} Suppose $\nabla^2 f +fg=0$ on $M$. Take a closed subset $N \subset M_{c}$, denote by $N_{\delta}$ the domain resulted from
the normalized gradient flow of $f$ on $N$ for $t\in[0, \delta]$. Suppose that $N_{\delta}\subset \mathring{M}$, then $g$ is the warped product metric
$dt^2+\frac{\cos^2(\alpha+t)}{\cos^2 \alpha} g_{N}$. Here we set $c=L \sin \alpha$.
\end{proposition}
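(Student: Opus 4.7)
The plan is to introduce coordinates on $N_\delta$ by flowing outward from $N$ along $\nabla f/|\nabla f|$, and then to read off the warped-product form of $g$ by plugging the Obata equation into the standard evolution equation of the induced metric on the level sets of $f$. Since $\nabla^2 f(\nabla f, \cdot) = -f\langle \nabla f, \cdot\rangle$ by the Obata equation, Lemma~\ref{geodesic} shows that each integral curve $\gamma$ of $\nabla f/|\nabla f|$ is a unit-speed geodesic, while Lemma~\ref{l2.1} gives
\[
\frac{d}{dt} f(\gamma(t)) = |\nabla f|(\gamma(t)) = \sqrt{L^2 - f(\gamma(t))^2}.
\]
Integrating this ODE with initial value $c = L\sin\alpha$ produces $f(\gamma(t)) = L\sin(\alpha+t)$ and $|\nabla f|(\gamma(t)) = L\cos(\alpha+t)$, both uniform in the starting point $p\in N$. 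Because $\gamma(t)$ lies on the regular level set $M_{L\sin(\alpha+t)}$ and $|\nabla f|>0$ throughout, the flow map $\Phi(p,t) := \gamma_p(t)$ is a smooth diffeomorphism from $N \times [0,\delta]$ onto $N_\delta$.

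In the coordinates $(t, y)$ supplied by $\Phi$, the coordinate field $\partial_t = \nabla f/|\nabla f|$ is a unit normal to each level set $\Phi_t(N)$, while the remaining coordinate fields $\partial_{y^i}$ are tangent to that level set. Hence the metric has no cross terms and takes the form $g = dt^2 + g_t$, where $g_t$ is a one-parameter family of metrics on $N$ with $g_0 = g_N$. Using $[\partial_t, \partial_{y^i}] = 0$ together with the paper's sign convention $h(X,Y) = \langle \nabla_X \partial_t, Y\rangle$, a direct calculation yields the evolution identity $\partial_t g_t = 2 h_t$, where $h_t$ denotes the second fundamental form of $\Phi_t(N)$ with respect to $\partial_t$.

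To close the loop, for $X, Y$ tangent to $N$ I will compute
\[
\nabla^2 f(X, Y) = -\langle \nabla_X Y, \nabla f\rangle = |\nabla f|\, h_t(X, Y) = L\cos(\alpha+t)\, h_t(X, Y),
\]
and substitute this into $\nabla^2 f + f g = 0$ to get $h_t = -\tan(\alpha+t)\, g_t$. Combined with $\partial_t g_t = 2 h_t$, this produces the scalar ODE $\partial_t g_t = -2\tan(\alpha+t)\, g_t$, whose unique solution with initial datum $g_N$ is
\[
g_t = \frac{\cos^2(\alpha+t)}{\cos^2 \alpha}\, g_N,
\]
which is exactly the asserted warped-product form. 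The only delicate point I anticipate is aligning the sign conventions for $h_t$, for $\partial_t g_t = 2 h_t$, and for $\nabla^2 f(X,Y) = |\nabla f|\,h_t(X,Y)$ with the convention fixed in Section~\ref{sec.gen}; once these are consistent the remainder is a short computation.
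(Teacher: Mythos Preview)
Your proof is correct and follows essentially the same route as the paper: set up product coordinates via the normalized gradient flow, write $g=dt^2+g_t$, use the Obata equation together with $h_t=\tfrac12\partial_t g_t$ to obtain the scalar ODE $\tfrac12 f'(t)\,\partial_t g_t+f(t)g_t=0$ (equivalently your $\partial_t g_t=-2\tan(\alpha+t)g_t$), and integrate from $g_0=g_N$. The only cosmetic difference is that the paper derives $f(\gamma(t))=L\sin(\alpha+t)$ from the second-order ODE $f''+f=0$ with the two initial data, whereas you use the equivalent first-order form $f'=\sqrt{L^2-f^2}$ coming from Lemma~\ref{l2.1}.
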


\begin{proof}
By Lemma~\ref{l2.1} and the assumption, we have $f(N_{\delta})\subset(-L, L)$.

Morse theory implies that $N_{\delta}$ is diffeomorphic to $N\times[0, \delta]_t$. Since we consider the normalized gradient flow, i.e.,
the flow of $\frac{\nabla f}{|\nabla f|}$, the metric $g$ on $N_{\delta}$ takes the form $dt^2 +g(t)$, where $g(t)$ is a family of metrics on $N$.
(\ref{eq.robin}) implies the following metric O.D.E
\begin{align}  \label{metricode}
\frac{1}{2}f'(t) g'(t)+f(t) g(t)=0.
\end{align}
Let $\gamma$ be a flow line of $\frac{\nabla f}{|\nabla f|}$, then $f(\gamma(t))$ satisfies
\begin{align} \label{eq.geodesic}
f''(\gamma(t))+f(\gamma(t))=0,
\end{align}
with $f(\gamma(0))=c$ and $f'(\gamma(0))=\sqrt{L^2-c^2}$.
Since $c=L \sin \alpha$, we solve for (\ref{eq.geodesic}) to get
\[
f(\gamma(t))=L \sin(\alpha+t).
\] Plugging this to (\ref{metricode}) and in view of $g(0)=g_N$, we obtain
\[
g(t)=\frac{\cos^2(\alpha+t)}{\cos^2 \alpha} g_{N}.
\]
\end{proof}

\begin{proposition} \label{Pspherical} Let $(M, g)$ and $f$ be as above. Let $p \in \mathring{M}$ such that $f(p)=L$.
Then for any star shaped region $V \subset T_pM$ containing $p$ where the exponential map $exp_p$ is defined,
then $g|_{exp_p(V)}$ is the spherical metric.
\end{proposition}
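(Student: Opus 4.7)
The plan is to introduce geodesic polar coordinates at $p$, derive a warped-product form for $g$ on the punctured region $\exp_p(V) \setminus \{p\}$ directly from the Obata equation, and then match the transverse factor against the normal-coordinate asymptotics at $p$ to identify the round metric.

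First, from Lemma \ref{l2.1} and $f(p) = L$ we see that $p$ is an interior critical point, and the Obata equation at $p$ gives $\nabla^2 f(p) = -L\, g(p)$, so $p$ is a non-degenerate maximum. By Lemma \ref{geodesic} the flow lines of $\nabla f/|\nabla f|$ are geodesics, and near $p$ they are (after extension) just the radial geodesics through $p$. Along any unit-speed geodesic $\gamma_\omega(r) = \exp_p(r\omega)$ with $\omega$ in the unit sphere $S^{n-1} \subset T_pM$, the function $u(r) := f(\gamma_\omega(r))$ solves $u'' + u = 0$ with $u(0) = L$ and $u'(0) = 0$, hence $f = L\cos r$ throughout $\exp_p(V)$. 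In particular, every geodesic sphere $\{r = \epsilon\} \subset \exp_p(V)$ with $\epsilon \in (0, \pi/2)$ is a regular level set of $f$.

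On the punctured region $\exp_p(V) \setminus \{p\}$, the Gauss lemma gives $g = dr^2 + \tilde g(r, \omega)$ for a smoothly varying family $\tilde g(r, \cdot)$ of metrics on $S^{n-1}$, and the same metric-ODE computation as in the proof of Proposition \ref{Pwarped} (applied to $N = \{r = \epsilon\}$ for small $\epsilon$) reduces the tangential part of the Obata equation to
\[
\tfrac{1}{2}\, u'(r)\, \partial_r \tilde g(r,\omega) + u(r)\, \tilde g(r,\omega) = 0.
\]
Inserting $u(r) = L\cos r$ and integrating yields $\tilde g(r, \omega) = \sin^2(r)\, h(\omega)$ for some $r$-independent metric $h$ on $S^{n-1}$.

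The main obstacle is the last step: identifying $h$ with the canonical round metric $g_{S^{n-1}}$ on the unit sphere of $T_pM$. For this one invokes the standard fact that in normal coordinates at $p$, the induced metric on the geodesic sphere of radius $r$ admits the expansion $\tilde g(r, \omega) = r^2 g_{S^{n-1}}(\omega) + O(r^4)$ as $r \to 0^+$; combined with $\sin^2 r = r^2 + O(r^4)$ this forces $h = g_{S^{n-1}}$. This asymptotic matching is precisely what bridges the warped-product formula (valid only away from $p$) with the smooth extension of $g$ across $p$. The conclusion is $g|_{\exp_p(V)} = dr^2 + \sin^2(r)\, g_{S^{n-1}}$, the standard spherical metric in geodesic polar coordinates.
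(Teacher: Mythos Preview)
Your argument is correct and follows essentially the same route as the paper's: derive the metric ODE in radial form, solve it, and pin down the transverse factor via the normal-coordinate asymptotics $\tilde g(r)/r^2 \to g_{S^{n-1}}$. One point of care: invoking the Gauss lemma to write $g = dr^2 + \tilde g(r,\omega)$ globally on $\exp_p(V)\setminus\{p\}$ presupposes that $\exp_p$ is a diffeomorphism on all of $V$, which is not given a priori; the paper sidesteps this by first running the argument on a small ball $B_{r_0}$ (where $\exp_p$ is automatically a diffeomorphism) and then propagating outward along each radial ray via Proposition~\ref{Pwarped}, which is exactly what your parenthetical application of that proposition with $N=\{r=\epsilon\}$ achieves once read direction by direction.
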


\begin{proof}
If $f(p)=L$, by Lemma~\ref{l2.1} and (\ref{eq.robin}), it is a non-degenerate critical point of $f$, thus isolated.
Since all flow lines of $\frac{\nabla f}{|\nabla f|}$ terminates till $f\to L$, it follows that all
geodesics starting at $p$ are backward flow lines. By (\ref{eq.geodesic}), it is easy to see that $f(x)=L \cos d(x, p)$.

Take any star shape region $V$ in $T_pM$, there exists $r_0$ small enough such that $B_{r_0}\subset V$
and $exp_p$ is a diffeomorphism on $B_{r_0}$.

We \textbf{claim} that $g$ on $exp_p(B_{r_0}(p))$ is the spherical metric.

Let $r$ be the radial coordinate associated with $exp_p$. Then the metric has the form $g=dr^2+\bar{g}(r)$, where
$\bar{g}(r)$ is a family of metrics on $\mathbb{S}^{n-1}$.
The metric O.D.E implied by (\ref{eq.robin}) is
\begin{align}  \notag
\frac{1}{2}f'(r) \bar{g}'(r)+f(r) \bar{g}(r)=0.
\end{align}
Also note $f(r)=L \cos r$. Since $p$ is a smooth manifold point, we have
\[
\lim_{r\to 0} \frac{\bar{g}(r)}{r^2}= g_{\mathbb{S}^{n-1}}.
\]
Based on this, we get $\bar{g}(r)=\sin^2 r g_{\mathbb{S}^{n-1}}$. Therefore in a small neighborhood of $p$, $g$ is the spherical metric.

Now for any $q \in  exp_p(V)$, then there exists a geodesic $\gamma: [0,L ]\to exp_p(V)$ connecting $p$ and $q$, with $\gamma(0)=p$.
Consider the part $\gamma|_{s\geq r_0}$, there exists a neighborhood $N_{\delta}$ of the form in Proposition~\ref{Pwarped}
with $N \subset \partial exp_p(B_{r_0})$. By Proposition~\ref{Pwarped} and the above claim, it follows $g$ is the spherical metric on $N_{\delta}$.
\end{proof}

We conclude this section by recalling the notion of the transnormal function and some basic properties of it.
Let $(N, g)$ be a closed Riemannian manifold, a function $f$ of class $C^2$ is called a transnormal function, if
\[
|\nabla f|^2 =b(f),
\]
for some function $b$ of class $C^2$ defined on the range of $f$. Put $V_{+}=\{ f=\max\}$ and $V_{-}=\{ f=\min\}$, they are called focal varieties.
The fundamental property of the focal varities is due to Q.M. Wang~\cite{Wa}.

\begin{theorem}[Wang]
The focal varieties of a transnormal function are smooth submanifolds.
\end{theorem}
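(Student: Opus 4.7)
The plan is to prove the statement for $V_+$ (the case of $V_-$ is symmetric), by extracting algebraic constraints on the Hessian from the transnormal equation and combining them with the structure of the normalized gradient flow. First I would differentiate $|\nabla f|^2 = b(f)$ once to obtain the eigenvector identity $f_{;ij} f^{;j} = \tfrac12 b'(f)\, f_{;i}$, which in particular (together with Lemma~\ref{geodesic}) forces integral curves of $\nabla f/|\nabla f|$ to be geodesics. Differentiating the identity once more and evaluating at $p \in V_+$, where $\nabla f(p) = 0$ and $f(p) = c_+ := \max f$, yields the algebraic constraint
\[
(\nabla^2 f(p))^2 = \tfrac12 b'(c_+)\, \nabla^2 f(p).
\]
Since $b \geq 0$ on the range of $f$ with $b(c_+) = 0$, we have $\lambda := \tfrac12 b'(c_+) \leq 0$, and every eigenvalue of $\nabla^2 f(p)$ is either $0$ or $\lambda$. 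The kernel of $\nabla^2 f(p)$ is the natural candidate for the tangent space to $V_+$.

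In the main case $\lambda < 0$, along an integral curve $\gamma$ of $\nabla f/|\nabla f|$ the function $u(s) := f(\gamma(s))$ satisfies $u' = \sqrt{b(u)}$; because $b(t) \sim -2\lambda (c_+ - t)$ near $c_+$, the time $s_0 := \int_{c_0}^{c_+} dt/\sqrt{b(t)}$ is finite for any $c_0 < c_+$. For $c_0$ close enough to $c_+$, the level set $M_{c_0}$ is a smooth hypersurface, and I would define the smooth endpoint map $\Phi : M_{c_0} \to M$ sending $x \mapsto \gamma_x(s_0)$, whose image lies in $V_+$. A direct analysis at $p \in V_+$ via the exponential map (similar in spirit to the proof of Proposition~\ref{Pspherical}) shows that the backward geodesics arriving at $p$ in time $s_0$ correspond precisely to unit vectors in the $\lambda$-eigenspace $E_\lambda(p)$ of $\nabla^2 f(p)$: each such geodesic produces the correct asymptotic ODE for $u(s)$, while directions in $\ker \nabla^2 f(p)$ keep the geodesic tangent to the critical locus to second order. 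Hence $\Phi^{-1}(p) \cong S^{k-1}$ with $k = \dim E_\lambda(p)$, and the constant-rank theorem identifies $V_+$ as an embedded submanifold with tangent space $\ker \nabla^2 f(p)$.

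The main obstacles are (i) establishing local constancy of $k$ along $V_+$, and (ii) handling the degenerate case $\lambda = 0$. For (i), the identity $(\nabla^2 f)^2 = \lambda\, \nabla^2 f$ restricts the spectrum on $V_+$ to $\{0,\lambda\}$ with no intermediate values, so rank changes can only be discrete; combining upper semicontinuity of $\dim \ker \nabla^2 f$ with the fiberwise sphere structure of $\Phi$ prevents jumps along connected components, giving local constancy of $k$ via an open-closed argument. For (ii), when $\lambda = 0$ the Hessian vanishes identically on $V_+$, $b$ vanishes to order at least two at $c_+$, and flow lines only asymptotically approach $V_+$; one must replace $\Phi$ by a suitably rescaled (blown-up) flow map and exploit the $C^2$-regularity of $b$ together with the $\omega$-limit-set structure of the gradient flow on the compact manifold $N$ to recover a smooth parametrization of $V_+$. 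This degenerate regime is the delicate part of the argument and is where the transnormal hypothesis, as opposed to the stronger isoparametric one, exerts the least control.
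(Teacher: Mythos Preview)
The paper does not supply its own proof of this statement: the theorem is quoted verbatim as a result of Q.-M.~Wang~\cite{Wa} and is used as a black box (only to justify that $\Sigma_\pm$ are smooth submanifolds, feeding into Lemma~\ref{focal}). There is therefore no argument in the paper to compare your proposal against.

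As a side remark on your sketch itself: the differentiation argument giving $(\nabla^2 f(p))^2=\tfrac12 b'(c_+)\,\nabla^2 f(p)$ on $V_+$ is correct and is indeed the starting point of Wang's original proof. In the nondegenerate case $\lambda<0$ your endpoint map $\Phi$ is essentially the focal map Wang uses, though the constant-rank step you outline in (i) needs more than semicontinuity plus ``discreteness'' of the spectrum: what actually pins down $k$ is that the fiber $\Phi^{-1}(p)$ is a sphere of dimension $k-1$ \emph{and} that $\Phi$ is a smooth map from a connected compact hypersurface, so its rank (hence $k$) is forced to be constant on each component of $V_+$ via the normal sphere bundle picture rather than a bare open--closed argument on eigenvalue multiplicities. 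Your treatment of the degenerate case $\lambda=0$ is, as you acknowledge, only a placeholder; Wang handles it, but not by a rescaled flow --- the argument there is genuinely different and relies on showing that $V_+$ is locally a level set of an auxiliary function built from $f$. If you intend to actually write out a proof rather than cite \cite{Wa}, that is the part requiring real work.
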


In view of Lemma ~\ref{l2.1} and the Robin boundary condition, we have
\begin{align}  \label{e2.1}
|\bar{\nabla} f|^2+(1+a^2)f^2=L^2
\end{align} on $\partial M$. Hence restriction of $f$ on each boundary component is a transnormal function.

Denote by $\Sigma_{+}$ and $\Sigma_{-}$ the focal submanifolds of a boundary component $S$.

\begin{lemma} \label{focal}
If codimensions of $\Sigma_{+}$ and $\Sigma_{-}$ are both greater than $1$, then $\Sigma_{+}$ and $\Sigma_{-}$ are connected.
\end{lemma}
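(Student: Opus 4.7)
The plan is to exploit the transnormality of $\bar f := f|_S$ (from equation (\ref{e2.1})) together with the connectedness of $S$. By Wang's theorem quoted immediately above, $\Sigma_\pm$ are smooth submanifolds, and the standard structure theory for transnormal functions---obtained by integrating the normalized gradient flow of $\bar f$ on the closed manifold $S$, in the spirit of Proposition~\ref{Pwarped}---yields a diffeomorphism
\[
S \setminus (\Sigma_+ \cup \Sigma_-) \;\cong\; S_c \times \bigl(\min_S \bar f,\; \max_S \bar f\bigr),
\]
where $S_c$ denotes any regular level set of $\bar f$. It therefore suffices to show that $S_c$ is connected.

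Let $\Sigma_+^1,\ldots,\Sigma_+^p$ and $\Sigma_-^1,\ldots,\Sigma_-^q$ denote the connected components of the two focal varieties, and write $d_\pm^i = \mathrm{codim}_S \Sigma_\pm^i \geq 2$. Each $\Sigma_\pm^i$ admits a pairwise disjoint tubular neighborhood $U_\pm^i$ diffeomorphic to its normal disk bundle of rank $d_\pm^i$, and since $\bar f$ is locally a function of the distance to $\Sigma_\pm^i$, we may arrange $\partial U_\pm^i$ to lie in a regular level set of $\bar f$. The normal sphere fiber $S^{d_\pm^i - 1}$ has dimension at least $1$, hence is connected; together with the connectedness of the base $\Sigma_\pm^i$, this forces $\partial U_\pm^i$ to be connected. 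Consequently exactly one connected component of a regular level set $S_c$ limits to each $\Sigma_+^i$ as $c \to \max_S \bar f$, and exactly one limits to each $\Sigma_-^j$ as $c \to \min_S \bar f$.

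Now enumerate $S_c = \bigsqcup_{k=1}^m S_c^k$ by connected components. Under the flow identification, let $T^k$ denote the closure in $S$ of the image of $S_c^k \times (\min_S \bar f,\, \max_S \bar f)$; by the previous paragraph $T^k$ contains exactly one $\Sigma_+^{i(k)}$ and one $\Sigma_-^{j(k)}$. Each $T^k$ is closed by construction and open, since it contains the tubular neighborhoods $U_+^{i(k)}$ and $U_-^{j(k)}$ together with the open flow cylinder joining them. As the $T^k$ are pairwise disjoint and cover $S$, the connectedness of $S$ forces $m = 1$. Hence $S_c$ is connected, and by the bijection of the preceding paragraph $\Sigma_+$ and $\Sigma_-$ are each connected as well.

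The main obstacle, and the precise place where the codimension hypothesis is used, is the bijection step: if some codimension equalled $1$ then the normal sphere fiber would be the disconnected $S^0$, two distinct components of $S_c$ could limit to the same component of a focal variety, and the open-closed decomposition would collapse. With both codimensions at least $2$, the rest is a clean combination of Wang's structure theorem with the gradient-flow analysis of transnormal functions already developed in this section.
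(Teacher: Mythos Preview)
Your argument is correct, but it is considerably more elaborate than the paper's. The paper's proof is two lines: since $\Sigma_+$ and $\Sigma_-$ both have codimension at least $2$ in the connected closed manifold $S$, removing them does not disconnect $S$; hence $S\setminus(\Sigma_+\cup\Sigma_-)$ is connected, and since this set is diffeomorphic to $S_0\times(0,1)$ by the transnormal structure, $S_0$ is connected and the conclusion follows.

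You reach the same destination by a longer combinatorial route: you enumerate components of $S_c$ and of $\Sigma_\pm$, use the connectedness of the normal sphere bundles $\partial U_\pm^i$ to set up a bijection between them, and then run an open--closed argument on the resulting ``tubes'' $T^k$. This is fine, and it does make explicit where the codimension hypothesis enters (the sphere fiber $S^{d-1}$ is connected precisely when $d\ge 2$). But the step you work hardest on---showing that $S_c$ is connected---is exactly what the paper gets for free from the elementary fact that a codimension $\ge 2$ submanifold cannot separate a connected manifold. Your tubular-neighborhood analysis is essentially a hands-on reproof of that fact in this particular setting. The paper's approach is shorter and more transparent; yours has the minor virtue of being self-contained, not invoking the separation lemma as a black box.
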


\begin{proof} It follows from the assumption that $S\setminus (\Sigma_{+}\cup \Sigma_{-})$ is connected. In view of (\ref{e2.1}), $S\setminus (\Sigma_{+}\cup \Sigma_{-})$ is
diffeomorphic to $S_0 \times (0,1)$, thus $S_0$ is connected and the conclusion follows.
\end{proof}

\section{$a>0$ and The Proof of Theorem \ref{thm.positive}}\label{sec.pos}

In this section, we assume $f$ is a non-constant function satisfying equation (\ref{eq.robin}) and put
\[
a=\cot \theta>0,\quad\theta\in (0,\pi/2).
\]
We first deal with the simple case that $f$ is constant on a connected boundary component. The idea is
already mentioned in previous section. Namely consider the gradient flow.

\begin{proposition}\label{constantSp}
Suppose there exists a boundary component $S$, such that $f|_{S}$ is constant.
Then $\partial M=S$, which is isometric to $T^{n-1}(\theta)$ and $M$ is isometric
to a geodesic ball of radius $\pi/2-\theta$ in $\mathbb{S}^n$.
\end{proposition}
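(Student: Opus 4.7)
The plan is to analyze the normalized gradient flow of $f$ emanating inward from the connected component $S$, show that it reaches a single interior maximum point without touching $\partial M$ again, and use the two structural results of Section~\ref{sec.gen} to identify $M$ with a spherical cap.

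First I would extract the initial data of this flow on $S$. Since $f|_S\equiv c$ is constant, $\bar{\nabla}f=0$ and the Robin condition yields $\partial f/\partial\nu=-ac$. Combined with Lemma~\ref{l2.1}, this gives $(1+a^2)c^2=L^2$, hence $c=\pm L\sin\theta$ using $1+a^2=\csc^2\theta$. Replacing $f$ by $-f$ if necessary, take $c=L\sin\theta$, so $\nabla f=-L\cos\theta\,\nu$ on $S$ points strictly into $M$ with magnitude $L\cos\theta$. Each flow line $\gamma$ of $\nabla f/|\nabla f|$ through $S$ is a geodesic by Lemma~\ref{geodesic}, and along it the ODE $(f\circ\gamma)''+(f\circ\gamma)=0$ with the above initial data gives $f(\gamma(t))=L\sin(\theta+t)$, so $f$ attains the value $L$ precisely at time $t^*=\pi/2-\theta$.

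The crucial step, and the main obstacle I anticipate, is to show that the flow never exits $M$ through another boundary component before time $t^*$. If some flow line met another boundary component $S'$ at time $t_0<t^*$, then at that point $\partial f/\partial\nu_{S'}=|\nabla f|\,\gamma'(t_0)\cdot\nu_{S'}\geq 0$ since the flow exits $M$; yet the Robin condition forces $\partial f/\partial\nu_{S'}=-aL\sin(\theta+t_0)<0$, a contradiction. This is exactly where the hypothesis $a>0$ is used decisively.

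With the flow confined to $\mathring M$ for $t\in(0,t^*)$, Proposition~\ref{Pwarped} identifies the flow region isometrically with $S\times[0,t^*)$ carrying the metric $dt^2+\frac{\cos^2(\theta+t)}{\cos^2\theta}g_S$. At $t=t^*$ every flow line limits to a critical point of $f$; these are isolated and non-degenerate since $\nabla^2 f=-Lg$ there, so by continuity of the endpoint map and connectedness of $S$ all flow lines converge to a single point $p_*\in\mathring M$. Proposition~\ref{Pspherical} then furnishes a spherical metric in a neighborhood of $p_*$; substituting $s=t^*-t$ and matching $ds^2+\frac{\sin^2 s}{\cos^2\theta}g_S$ against the model $ds^2+\sin^2 s\,g_{\mathbb{S}^{n-1}}$ forces $g_S=\cos^2\theta\cdot g_{\mathbb{S}^{n-1}}$, identifying $S$ with $T^{n-1}(\theta)$ and the closed flow region $\bar\Omega$ with the geodesic ball of radius $\pi/2-\theta$ in $\mathbb{S}^n$. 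To conclude $\partial M=S$, I would note that $\bar\Omega$ is compact (hence closed in $M$), while $\bar\Omega\setminus S$ is both open in $\mathring M$ (as a full-dimensional manifold neighborhood of each of its points) and closed in $\mathring M$ (as $\bar\Omega\cap\mathring M$); connectedness of $\mathring M$ then gives $\bar\Omega\supseteq\mathring M$, so $\bar\Omega=M$ and $\partial M=S$ as required.
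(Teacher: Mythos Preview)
Your proof is correct and follows essentially the same route as the paper: flow inward from $S$ along $\nabla f/|\nabla f|$, solve the ODE $f(\gamma(t))=L\sin(\theta+t)$, identify the warped product metric, and let the level sets collapse to a single smooth interior point to recognize the spherical cap. The only noteworthy variation is in the step showing the flow stays in $\mathring M$: the paper observes that $f(\gamma_p(t))>L\sin\theta$ while (by Lemma~\ref{l2.1} and the Robin condition) $|f|\leq L\sin\theta$ on all of $\partial M$, whereas you derive a sign contradiction from the Robin condition at a hypothetical exit point (in the spirit of Lemma~\ref{l3.1}); both arguments are short and equivalent, and your added justification that $\bar\Omega=M$ via connectedness fills in a detail the paper leaves implicit.
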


\begin{proof}
Without loss of generality, we can assume $f|_{S}$ is a positive constant. Hence by (\ref{e2.1})
\[
f|_S= L\sin \theta, \quad \frac{\partial f}{\partial \nu}=-L \cos \theta.
\]
Consider the flow of $S$ along $\frac{\nabla f}{|\nabla f|}$. Clearly, $\frac{\nabla f}{|\nabla f|}$
is opposite to $\nu$ along $S$. Let $\gamma_p(t)$ be the flow line starting from an arbitrary point $p\in S$.
By (\ref{eq.robin}),
\[
f''(\gamma_p(t))+f(\gamma_p(t))=0.
\]
So $f(\gamma_p(t))=L\sin (t+\theta)$. For $t\in(0, \frac{\pi}{2}-\theta)$, $f(\gamma_p(t))> L \sin \theta$, therefore $\gamma_p(t)$ is an interior point.
The only critical point of $f$ is where $f=\pm L$, therefore $M\setminus \{f=\pm L\}$ is diffeomorphic to $S \times_t [0, \frac{\pi}{2}-\theta)$, and
 $g$ takes the form $g=dt^2 +\bar{g}(t)$. The Obata equation implies
\[
\frac{1}{2} \frac{d }{d t}f(\gamma_p(t))\bar{g}'(t)+f(\gamma_p(t))\bar{g}(t)=0.
\]
Since $\bar{g}(0)=\bar{g}|_S$, we obtain
\[
\bar{g}(t)=\frac{\cos^2(t+\theta)}{\cos^2\theta} \bar{g}|_{S}, \quad
g=dt^2 +\frac{\cos^2(t+\theta)}{\cos^2\theta} \bar{g}|_{S}.
\]
Letting $t\rightarrow \pi/2-\theta$, $(S,\bar{g}|_S)$ shrinks to a smooth point, thus
$(S,\bar{g}|_S)=\mathbb{S}^{n-1}(\cos\theta)$ and $(M,g)$ is isometric to a geodesic ball of radius $\pi/2-\theta$ in $\mathbb{S}^n$.
\end{proof}

Next we deal with the general case that $f$ is not constant on any boundary component.
To this end, we shall establish several lemmas and propositions.
The following lemma characterizes inward geodesics from any boundary point in the direction of  $\nabla f$.

\begin{lemma}\label{l3.1} $\forall p \in M$, let $\gamma(t)$ be the geodesic starting at $p$ with $\gamma'(0)=\frac{\nabla f}{|\nabla f|}$, then
\begin{itemize}
  \item if $f(p)> 0$, then $\gamma(t)$ hits an interior maximum before it hits the boundary ;
  \item if $f(p)< 0$, then $\gamma(t)$ hits an interior minimum before it hits the boundary.
\end{itemize}
\end{lemma}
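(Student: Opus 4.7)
The plan is to use the geodesic ODE for $f$ along $\gamma$, combined with the Robin condition, to rule out boundary intersections before the geodesic reaches a critical point.

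First I would note that $\gamma$ is in fact the integral curve of $\nabla f/|\nabla f|$, not just a geodesic started in that direction. This is because Lemma \ref{geodesic} applies (since equation (\ref{eq.robin}) makes $\nabla f$ an eigenvector of the hessian), so $\nabla f/|\nabla f|$ has geodesic flow lines; uniqueness of geodesics then gives $\gamma'(t)=\nabla f/|\nabla f|(\gamma(t))$ for as long as $|\nabla f|\neq 0$ and $\gamma$ remains in $M$. Consequently $F(t):=f(\gamma(t))$ satisfies $F''+F=0$ with $F(0)=f(p)$ and $F'(0)=|\nabla f|(p)=\sqrt{L^2-f(p)^2}$ by Lemma \ref{l2.1}. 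Hence $F(t)=L\sin(t+\alpha)$, where $\alpha\in(-\pi/2,\pi/2)$ is determined by $\sin\alpha=f(p)/L$. In the case $f(p)>0$ we have $\alpha\in(0,\pi/2)$, the value $F(t^*)=L$ is first attained at $t^*=\pi/2-\alpha>0$, and on $(0,t^*)$ the function $F$ is strictly increasing and strictly positive.

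Next I would argue by contradiction: suppose there is a first time $t_0\in(0,t^*]$ with $\gamma(t_0)\in\partial M$. Since $\gamma$ approaches $\partial M$ from the interior (or leaves it along the inward direction, in case $p\in\partial M$), at $t_0$ we must have $\langle\gamma'(t_0),\nu\rangle\geq 0$. Using $\gamma'(t_0)=\nabla f/|\nabla f|(\gamma(t_0))$, this translates to $\partial f/\partial\nu(\gamma(t_0))\geq 0$, and the Robin condition gives $-a f(\gamma(t_0))\geq 0$, so $f(\gamma(t_0))\leq 0$ because $a>0$. This contradicts $F(t_0)>F(0)=f(p)>0$. Hence $\gamma((0,t^*))\subset\mathring{M}$.

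It remains to check that the endpoint $\gamma(t^*)$ itself lies in the interior. At $t^*$ we have $F(t^*)=L$, hence $|\nabla f|(\gamma(t^*))=0$ by Lemma \ref{l2.1}. If $\gamma(t^*)$ were on $\partial M$, then both $\bar\nabla f$ and $\partial f/\partial\nu$ would vanish there, but the Robin condition forces $f=0$ at that point, contradicting $f(\gamma(t^*))=L>0$. So $\gamma(t^*)$ is an interior critical point of $f$ with $f=\max f$, that is, an interior maximum, reached before $\gamma$ meets the boundary. The case $f(p)<0$ is identical after replacing $f$ by $-f$ (the Robin condition is preserved), which gives an interior minimum before boundary contact. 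The only place where the sign $a>0$ enters is the contradiction $f(\gamma(t_0))\leq 0$ versus $f(\gamma(t_0))>0$, so I expect no essential obstacle beyond keeping track of this sign and treating the trivial edge case $|f(p)|=L$ separately.
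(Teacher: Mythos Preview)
Your proof is correct and follows essentially the same approach as the paper's: identify $\gamma$ with the integral curve of $\nabla f/|\nabla f|$ via Lemma~\ref{geodesic}, solve the ODE $F''+F=0$ to get $F(t)=L\sin(t+\alpha)$, and derive a contradiction from the Robin condition at a putative first boundary hit. The paper argues the same way, only with a strict inequality $\langle\gamma'(t_0),\nu\rangle>0$ (which is justified since equality would force $f=0$ by the Robin condition). Your treatment is in fact slightly more careful than the paper's, which does not separately verify that the endpoint $\gamma(t^*)$ is interior; one small slip is that in your second paragraph you should restrict to $t_0\in(0,t^*)$ so that $\gamma'(t_0)=\nabla f/|\nabla f|$ is well defined, since you handle $t_0=t^*$ separately in the next paragraph anyway.
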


\begin{proof}
The existence of such a geodesic only needs clarification when $p \in \partial M$. In that case, by the Robin condition, if $f(p)>0$ then
\[
0>-af(p)=\frac{\partial f}{\partial \nu}(p)=\nabla f \cdot \nu.
\]
Hence $\nabla f(p)$ points inward. So the geodesic starting at $p$ in the direction of
 $\frac{\nabla f(p)}{|\nabla f(p)|}$ exists at least for a short period of time. By virtue of Lemme~\ref{geodesic}, $\gamma(t)$ coincides with the integral curve of $\frac{\nabla f}{|\nabla f|}$.
Therefore the restriction of $f$ on $\gamma(t)$ satisfies
\[
f''(\gamma(t))+ f(\gamma(t))=0.
\]
Assume $f(p)=L \sin \alpha$ with $\alpha\in(0, \frac{\pi}{2})$, then
\[
f(\gamma(t))=L \sin(\alpha+t).
\]
Suppose $\gamma(t)$ hits $\partial M$ at $t_0$ before it reaches a global maximum, then $t_0<\frac{\pi}{2}-\alpha$ and $f(\gamma(t_0))>0$.  Since
 $\gamma'(t_0)$ forms an acute angle with $\nu$, we have  $\frac{\partial f}{\partial \nu}(\gamma(t_0))>0$. On the other hand, due to the Robin boundary condition
 \[
\frac{\partial f}{\partial \nu}(\gamma(t_0))=-a f(\gamma(t_0))<0,
 \] a contradiction. The proof of second statement is similar.
\end{proof}

\begin{proposition} \label{P3.1}
Let $X_{+}(X_{-})$ be a connected component of $\{f>0\}(\{f<0\})$, then $X_{+}(X_{-})$ can be isometrically embedded into the hemisphere $\mathbb{S}^{n}_{+}$.
\end{proposition}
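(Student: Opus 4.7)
The plan is to identify a unique interior maximum $p \in X_+$ with $f(p) = L$ and then use $\exp_p$ as spherical polar coordinates to isometrically embed $X_+$ into $\mathbb{S}^n_+$. For existence, let $p^* \in \bar X_+$ maximize $f|_{\bar X_+}$; by Lemma~\ref{l3.1} $p^*$ cannot lie on $\partial M$ (the inward geodesic along $\nabla f/|\nabla f|$ would produce strictly higher $f$-values), and $p^* \notin \{f=0\}$ since $f>0$ on $X_+$. Hence $p^* \in X_+ \cap \mathring M$ is a critical point of $f$, and Lemma~\ref{l2.1} forces $f(p^*) = L$. At any such point $\nabla^2 f = -Lg$, so interior maxima of $f$ are non-degenerate, isolated, and finite in $\bar X_+$. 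For uniqueness, let $U = X_+ \setminus \{\text{interior maxima}\}$, which is connected for $n \ge 2$. For $q \in U$, the forward flow of $\nabla f/|\nabla f|$ is a geodesic (Lemma~\ref{geodesic}) with $f(\gamma(t)) = L\sin(\alpha_q+t)$ for $\alpha_q = \arcsin(f(q)/L)$; the Robin condition keeps $\nabla f/|\nabla f|$ inward-pointing on $\partial M \cap \{f>0\}$, so $\gamma$ stays in $\mathring M$ for $t > 0$ and reaches an interior maximum at $t = \pi/2 - \alpha_q$. The resulting assignment $\pi\colon U \to \{\text{interior maxima}\}$ is continuous (continuous dependence of ODEs on initial data) with discrete target, hence constant on connected $U$. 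Therefore $X_+$ has a unique interior maximum $p$.

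For the parameterization, let $T_v^\partial$ denote the first time $\gamma_v(t) := \exp_p(tv)$ (with $|v|_g=1$) meets $\partial M$. Whenever $T_v^\partial < \pi/2$, along the geodesic $f = L\cos t > 0$, and the Robin condition makes $\gamma_v' = -\nabla f/|\nabla f|$ outward-pointing at the hit, which is therefore transversal; thus $T_v^\partial$ is continuous in $v$. Setting $T_v = \min(T_v^\partial, \pi/2)$ and
\[
V = \{tv \in T_p M : |v|_g = 1,\ 0 \le t < T_v\},
\]
the set $V$ is open and star-shaped with $\exp_p(V) \subset \mathring M$. Reversing the forward gradient flow from any $q \in X_+ \cap \mathring M$ gives a unique geodesic from $p$ to $q$, exhibiting $q = \exp_p(t_q v_q)$ with $t_q v_q \in V$ and showing $\exp_p|_V$ is injective. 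Thus $\exp_p\colon V \to X_+ \cap \mathring M$ is a diffeomorphism, and Proposition~\ref{Pspherical} gives that $g$ is the spherical metric on $X_+ \cap \mathring M$, hence on all of $X_+$ by smoothness.

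Fixing a linear isometry $\iota\colon T_p M \to T_N \mathbb{S}^n$, the composition $\Phi := \exp_N^{\mathbb{S}^n} \circ \iota \circ \exp_p^{-1}$ is an isometric embedding of $X_+ \cap \mathring M$ into the open hemisphere $\mathbb{S}^n_+$ (since $t_q < \pi/2$ throughout) and extends continuously across $X_+ \cap \partial M$. The case of $X_-$ follows by applying the same argument to $-f$. The main obstacle is the uniqueness of the interior maximum: without it, distinct forward gradient flow lines could terminate at distinct critical points and $\Phi$ would fail to be globally single-valued; the connectedness-discreteness argument for $\pi$ is the heart of the proof.
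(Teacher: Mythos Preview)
Your proof is correct and follows essentially the same approach as the paper: locate an interior maximum $p \in X_+$ with $f(p)=L$, realize $X_+$ as $\exp_p$ of a star-shaped region in $T_pM$, and invoke Proposition~\ref{Pspherical} to conclude the metric is spherical. You supply considerably more detail than the paper---in particular your existence argument via Lemma~\ref{l3.1} and your connectedness--discreteness argument for the uniqueness of the interior maximum---whereas the paper simply asserts ``It is easy to see that the connected component $X_+$ containing $p$ is indeed $\exp_p(\Omega)$'' and leaves these points to the reader.
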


\begin{proof}
Take a maximum point $p$, i.e., $f(p)=L$. For any unit vector $u\in T_p M$, let $\gamma$ be
the unit-speed geodesic starting at $p$ in the direction $u$. We know all geodesics starting at $p$
coincide with backward flow lines of $\frac{\nabla f}{|\nabla f|}$. It follows that
 $f(\gamma(t))=L \cos t$.

Denote by $T_u$ the time $\gamma(t)$ hits $\partial M$.
Let $\Omega $ be the region in $T_p M$ given under the polar coordinates as
\[
\Omega=\{(t,u)|t<\min\{\frac{\pi}{2}, T_u\}, u\in \mathbb{S}^{n-1}\}.
\]
It is easy to see that the connected component $X_{+}$ containing $p$ is indeed $exp_p(\Omega)$.
By Propostion~\ref{Pspherical}, the conclusion is evident.
Set $p$ as the north pole, then $X_{+}$ is in the upper hemisphere and $\partial X_{+} \cap M_0$
lies on the equator, which is totally geodesic.
\end{proof}

The following proposition is the heart of the proof, which helps to determine $\partial M$. Note (\ref{e2.1})
 implies that $f$ is a transnormal function on $S$. Let $\Sigma_{+}$ and $\Sigma_{-}$ denote its focal submanifolds.
Suppose $m_{+}=\dim \Sigma_{+}$ and $m_{-}=\dim \Sigma_{-}$.

\begin{proposition} \label{P3.2}
Assume each connected boundary component $S$ has constant mean curvature, then $m_{+}=m_{-}=m(S)$ and the second fundamental form of $S$ has at most two
distinct principal curvatures: $-a$ of multiplicity $n-1-m$ and $\frac{1}{a}$ of multiplicity $m(S)$.
\end{proposition}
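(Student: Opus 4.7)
The plan is to derive the structure of the shape operator $h$ on a boundary component $S$ by combining a twisted Obata Hessian identity with a Bochner computation (fed by the CMC hypothesis) and a Codazzi-based induction, then to read off the dimensions of $\Sigma_\pm$ via a Morse-Bott analysis at the focal varieties. The starting point is the boundary Hessian identity $\bar{\nabla}^2\bar{f} = \bar{f}(ah-\bar{g})$ on $S$, obtained by combining the Gauss formula $\bar{\nabla}^2\bar{f}(X,Y) = \nabla^2 f(X,Y) - h(X,Y)\,\partial f/\partial\nu$ with the Obata equation and the Robin condition. Differentiating the transnormal identity (\ref{e2.1}) and comparing with this Hessian identity forces $h(\bar{\nabla}\bar{f}) = -a\,\bar{\nabla}\bar{f}$ wherever $\bar{f}\neq 0$, and hence everywhere by continuity; so $-a$ is always a principal curvature with eigenvector $\bar{\nabla}\bar{f}$. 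Since $\bar{\nabla}\bar{f}$ is therefore also an eigenvector of $\bar{\nabla}^2\bar{f}$, Lemma \ref{geodesic} implies the integral curves of $E_1 := \bar{\nabla}\bar{f}/|\bar{\nabla}\bar{f}|$ are geodesics in $S$, so $\bar{\nabla}_{E_1}E_1 = 0$.

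Next I would extract a conservation law from CMC via Bochner. Tracing the Hessian identity gives $\bar{\Delta}\bar{f} = \bar{f}(aH-(n-1))$, a linear eigenfunction equation since $H$ is constant. Feeding this, the Gauss equation for $\mathrm{Ric}^S$ (using that $(M,g)$ already has constant curvature $1$, established earlier in this section) and the identity $h(\bar{\nabla}\bar{f},\bar{\nabla}\bar{f}) = -a|\bar{\nabla}\bar{f}|^2$ into the Bochner formula for $\bar{f}$ on $S$, the $|\bar{\nabla}\bar{f}|^2$-terms cancel and the $\bar{f}^2$-terms collapse to
\[
|h|^2 \;=\; (n-1) + \frac{1-a^2}{a}H,
\]
a constant. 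Consequently both $C_1 := \sum_{i\geq 2}\lambda_i$ and $C_2 := \sum_{i\geq 2}\lambda_i^2$ are constant on $S$, where $-a,\lambda_2,\ldots,\lambda_{n-1}$ denote the eigenvalues of $h$ at a regular point.

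Third, I would use Codazzi to derive an evolution equation for the $\lambda_i$ along $E_1$ and bootstrap. In a local principal frame $E_1,\ldots,E_{n-1}$ diagonalizing $h$, reading $\langle\bar{\nabla}_{E_i}E_1,E_i\rangle = \bar{f}(a\lambda_i-1)/|\bar{\nabla}\bar{f}|$ off the Hessian identity and using $\bar{\nabla}_{E_1}E_1=0$, the Codazzi relation $h_{ii;1} = h_{i1;i}$ simplifies to
\[
E_1(\lambda_i) \;=\; -\frac{\bar{f}}{|\bar{\nabla}\bar{f}|}(a+\lambda_i)(a\lambda_i-1), \qquad i\geq 2.
\]
Differentiating $C_2$ along $E_1$ using this ODE yields $aC_3+(a^2-1)C_2-aC_1 = 0$, so $C_3$ is constant; iterating, once $C_{k-1}$ and $C_k$ are constant, the analogous identity $aC_{k+1}+(a^2-1)C_k-aC_{k-1}=0$ forces $C_{k+1}$ to be constant. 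Hence every power sum is constant, the multiset $\{\lambda_i\}_{i\geq 2}$ is constant on $S$, and $E_1(\lambda_i)\equiv 0$ then forces $(a+\lambda_i)(a\lambda_i-1)=0$, i.e., $\lambda_i\in\{-a,\,1/a\}$. This establishes the two-principal-curvature statement with globally constant multiplicities $n-1-m$ and $m$.

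Finally, at any $p\in\Sigma_+$ we have $\bar{f}(p)=L\sin\theta\neq 0$, so $\bar{\nabla}^2\bar{f}(p) = L\sin\theta\,(ah-\bar{g})|_p$ is negative semi-definite with kernel exactly the $(1/a)$-eigenspace of $h$, of dimension $m$. Since $\Sigma_+$ is smooth by Wang's theorem and $\bar{f}$ is constant on it, differentiating $\bar{f}\equiv L\sin\theta$ twice along $\Sigma_+$ gives $T_p\Sigma_+\subseteq\ker\bar{\nabla}^2\bar{f}(p)$, and the non-degeneracy of $\bar{\nabla}^2\bar{f}(p)$ transverse to this kernel upgrades this inclusion to equality via the standard Morse-Bott normal form, so $m_+=m$; the identical argument at $\Sigma_-$ gives $m_-=m$. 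The main technical obstacle in the plan is the third step: the Codazzi-derived ODE for $\lambda_i$ has to lock precisely into the CMC and Bochner constants so that the three-term Newton-type recursion closes, and sign-bookkeeping in that principal-frame calculation is the most error-prone part.
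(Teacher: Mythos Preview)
Your approach is correct and genuinely different from the paper's. Both of you start the same way: establish the boundary Hessian identity $\bar{\nabla}^2\bar{f}=\bar{f}(ah-\bar{g})$, deduce that $\bar{\nabla}\bar{f}$ is a $(-a)$-eigenvector of $h$, and derive (via Codazzi and the fact that the ambient metric is spherical) the evolution
\[
|\bar{\nabla}\bar{f}|\,\lambda_i' \;=\; -\,\bar{f}\,(\lambda_i+a)(a\lambda_i-1)
\]
along integral curves of $E_1$. From here the two arguments diverge. The paper \emph{integrates} this ODE explicitly, obtaining the one-parameter family $\lambda(s)=\tfrac{a\mu\cos(\sqrt{1+a^2}\,s)+1}{\,a-\mu\cos(\sqrt{1+a^2}\,s)\,}$ together with the constant solution $-a$; it then reads off the terminal values at $\Sigma_+$ (where tangential directions force $1/a$ and limiting $E_1$-directions force $-a$) and finally invokes CMC to pin down $\mu=0$. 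You instead use CMC at the outset: your Bochner computation (which checks out, yielding $|h|^2=(n-1)+\tfrac{1-a^2}{a}H$) gives a second conserved quantity beyond $H$, and then the Codazzi ODE generates the three-term recursion $aC_{k+1}+(a^2-1)C_k-aC_{k-1}=0$ that propagates constancy to every power sum. This forces the spectrum of $h$ to be globally constant, whence $\lambda_i'\equiv 0$ and the ODE itself collapses to $(\lambda_i+a)(a\lambda_i-1)=0$.

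What each buys: the paper's explicit integration is what allows the remark following Proposition~\ref{P3.2} to say something without the CMC hypothesis (namely, that $S_0$ is isoparametric in the equatorial $\mathbb{S}^{n-1}$ with principal curvatures governed by finitely many values of $\mu$). Your Newton-recursion argument is more algebraic and avoids any integration, but because it front-loads CMC through Bochner it does not obviously yield that extra non-CMC information. Two small technical points worth tightening: your scalar ODE for $\lambda_i$ is cleanest stated as a matrix identity $\bar{\nabla}_{E_1}h=-\tfrac{\bar{f}}{|\bar{\nabla}\bar{f}|}(h+aI)(ah-I)$ (so that the recursion follows from $E_1(\mathrm{tr}\,h^k)=k\,\mathrm{tr}(h^{k-1}\bar{\nabla}_{E_1}h)$ without ever needing a smooth principal frame), and your final ``Morse--Bott'' step is really the transnormal structure: near $\Sigma_+$ one has $\bar{f}=\tfrac{L}{\sqrt{1+a^2}}\cos(\sqrt{1+a^2}\,r)$ with $r=\mathrm{dist}(\cdot,\Sigma_+)$, so the Hessian is $-L\sqrt{1+a^2}$ times the identity on the normal bundle of $\Sigma_+$, which immediately gives $T_p\Sigma_+=\ker\bar{\nabla}^2\bar{f}(p)$ and hence $m_+=m$.
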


\begin{proof}
Choose an orthonormal frame $\{e_1, \cdots, e_n\}$ with $e_n=\nu$. In view of (\ref{eq.robin}),
\[
0=f_{:in}=e_ie_n(f)- \nabla_{e_i} e_n (f).
\]
It follows that
\begin{align} \label{e3.1}
-a f_i =h_{ij} f_j.
\end{align}
This means $\bar{\nabla} f$ is a principal direction of $h$ with principal curvature $-a$.

By
\[
\nabla^2 f=\bar{\nabla}^2 f+ h_{ij} f_n,
\]
we have
\begin{align} \label{e3.2}
f_{;ij}+h_{ij}(-af)+ f \bar{g}=0 \quad \text{on $S$}.
\end{align} Hence $\bar{\nabla} f$ is an eigenvector of $\bar{\nabla}^2 f$. By Lemma~\ref{geodesic}, the integral curves of $\frac{\bar{\nabla} f}{|\bar{\nabla} f|}$ are geodesics in $(S, \bar{g})$.

We can further assume that $e_1$ is in the direction of $\bar{\nabla} f$. Hence $f_1\neq 0$ and $f_2=\cdots=f_{n-1}=0$.
Taking covariant derivative to (\ref{e3.1}), we get
\begin{align} \label{e3.3}
-a f_{;ik}=h_{ij;k}f_j+h_{ij} f_{;jk}.
\end{align}
Plugging (\ref{e3.2}) into (\ref{e3.3}), we have
\begin{align} \label{e3.4}
h_{i1;k} f_1+ (h_{ij}+a\delta_{ij})(afh_{jk}-f\delta_{jk})=0.
\end{align}
By Codazzi equation,
\[
h_{i1;k}-h_{ik;1}=R_{ik1n}=0.
\]
The vanishing of Riemannian curvature is due to Proposition~\ref{P3.1} that $g$ is the spherical metric.
Hence (\ref{e3.4}) becomes
\begin{align} \notag
h_{ik;1} f_1+ (h_{ij}+a\delta_{ij})(af h_{jk}- f\delta_{jk})=0.
\end{align}

From this point on, we choose an orthonormal frame $\{e_1, \cdots, e_{n-1}\}$ at a point
$p \in S$ with $f(p)=0$, such that $h_{ij}(p)$ is diagonal. Let $\gamma(s)$ be
the integral curve of $\frac{\bar{\nabla} f}{|\bar{\nabla} f|}$ passing through $p$.
Parallel transport this frame along $\gamma(s)$, i.e., $\bar{\nabla}_{e_1} e_i\equiv 0, 2\leq i\leq n-1$. Hence
\[
h_{ik;1}=e_1( h_{ik})- h(\bar{\nabla}_{e_1} e_i, e_k)-h(e_i, \bar{\nabla}_{e_1} e_k)= \frac{d}{ds} h_{ik}(\gamma(s)).
\]
Then $h_{ik}(\gamma(s))$ reviewed as an $(n-2)\times (n-2)$ matrix valued function satisfies the O.D.E
\begin{align} \label{e3.7}
f_1 \frac{d}{ds} h_{ik}(\gamma(s))  + (h_{ij}+a\delta_{ij})(af h_{jk}- f\delta_{jk})=0.
\end{align}
Hence $h_{ik}$ remains diagonal, i.e, $\{e_2, \cdots, e_{n-1}\}$ are all principal directions.

Thus (\ref{e3.7}) can be viewed as an O.D.E for each principal curvature, which we denote as $\lambda_2, \cdots, \lambda_{n-1}$.
By (\ref{e3.1}, \ref{e3.2}), $f''(\gamma(s))+(1+a^2) f(\gamma(s))=0$ and thus
\[
f(\gamma(s))=\frac{L}{\sqrt{1+a^2}} \sin(\sqrt{1+a^2} s), \quad s\in \left[0, \frac{\pi}{2\sqrt{1+a^2}}\right].
\]
Hence (\ref{e3.7}) reduces to
\begin{align}\label{e3.5}
\sqrt{1+a^2}\cos (\sqrt{1+a^2}s) \lambda_i'(s) + \sin(\sqrt{1+a^2} s) f(s)(\lambda_i(s)+a)(a \lambda_i(s)-1)=0.
\end{align}

According to (\ref{e3.2}), along the tangential direction of $\Sigma_{+}$, $h_{ij}$ has principal
curvature $\frac{1}{a}$ of multiplicity $m_{+}$. The normal directions of $\Sigma_{+}$ are spanned
by $\{\gamma'(s)\}$ with $ s\to \frac{\pi}{2\sqrt{1+a^2}} $, thus all have principal curvature $-a$. Viewing these as boundary values:
\[
\lim_{s\to \frac{\pi}{2\sqrt{1+a^2}}} \lambda_i(s)=-a, \quad \text{or} \quad  \lim_{s\to \frac{\pi}{2\sqrt{1+a^2}}} \lambda_i(s)=\frac{1}{a},
\]
we can solve (\ref{e3.5}) to get
\begin{align} \label{3.6}
 \lambda(s)\equiv -a, \quad \text{or} \quad \lambda(s)=\frac{a\mu \cos(\sqrt{1+a^2}s)+1}{a-\mu\cos(\sqrt{1+a^2}s)}\quad  \text{for some $\mu\in \mathbb{R}$.}
\end{align}

Since $\Sigma_{+}$ has constant mean curvature, it follows $\mu=0$ and consequently principal curvatures of $h_{ij}$ are:
\begin{align} \notag
-a & \quad \text{of multiplicity $n-1-m_{+}$}; \\ \notag
\frac{1}{a} & \quad \text{of multiplicity $m_{+}$}.
\end{align}

A similar argument applies to $S_{-}$. By continuity, it is necessary that $m_{+}=m_{-}$, which is denoted by $m(S)$.
\end{proof}

\begin{remark}
We remark here without assumption of constant mean curvature on each boundary component, we may also get further information regarding the principal curvatures.
It has an interesting connection with isoparametric hypersurfaces in spheres.

We need the following notions: let $X_{+}$ and $S_{+}$ be as above and set
\begin{itemize}
  \item $S_{0}:=\{f=0\}\cap \overline{S_{+}}$;
  \item $X_{0}:=\{f=0\} \cap \overline{X_{+}}$.
\end{itemize}

The corresponding Levi-Civita connections are denoted respectively as
\[
(X_{+}, \nabla), \quad  (S_{+}, \bar{\nabla}), \quad (X_{\delta}, \hat{\nabla}), \quad (S_{\delta}, \tilde{\nabla}).
\]
Choose an orthonormal frame field $e_1,...,e_{n-1}, e_n$ near $S_{+}$ with $e_n=\nu$ and $e_1=\frac{\bar{\nabla}f}{|\bar{\nabla} f|}$, then for $2\leq i,j \leq n-1$ we have
\begin{align} \label{3.7}
\nabla_{e_i} e_j-\bar{\nabla}_{e_i} e_j=-h_{ij} \nu.
\end{align}
and
\begin{align} \label{3.8}
\bar{\nabla}_{e_i} e_j-\tilde{\nabla}_{e_i} e_j=\beta_{ij} \frac{\bar{\nabla} f}{|\bar{\nabla} f|},
 \end{align} where $\beta_{ij}$ is the second fundamental form of $S_{0}$ in $S_{+}$ with respect to $-e_1$.

Clearly $X_{0}$ is in the equatorial sphere of $X_{+}$, thus
\begin{align} \label{3.9}
\nabla_{e_i} e_j-\hat{\nabla}_{e_i} e_j=0.
\end{align}
Restricting (\ref{e3.2}) to $S_{0}$, we find
\[
-\beta_{ij} |\bar{\nabla} f| -af h_{ij}+f g=0,
\]
it follows that $S_{0}$ is totally geodesic in $S_{+}$, i.e., $\beta_{ij}=0$ on $S_{0}$.

Hence combining (\ref{3.7}, \ref{3.8}, \ref{3.9}), we get
\begin{align} \notag
\hat{\nabla}_{e_i} e_j -\tilde{\nabla}_{e_i} e_j= -h_{ij} \nu.
\end{align}
This means the principal curvatures of the second fundamental form of $S_0$ in $X_{0}$ are identical to $h_{ij}$ restricted on $S_{0}$.
In view of (\ref{3.6}), possible principal curvatures are $\{-a, \frac{a\mu+1}{a-\mu}\}$.

Since $X_{0}$ is a region in the standard $S^{n-1}$, we infer that $S_{0}$ is
an isoparametric hypersurface in $S^{n-1}$. Thus according to the fundamental work of M\"{u}nzner~\cite{M}, the number $g$ of distinct principal curvatures could only be $1,2,3,4,6$.
Moreover, the principal curvatures
can be written as
\[
\cot (\theta), \cot (\theta +\frac{\pi}{g}), \cdots, \cot(\theta+\frac{(g-1)\pi}{g}).
\]
It follows by simple calculation that there are only finitely many possible values for $\mu$ and they come in pairs:
\begin{align} \notag
\mu\in \left\{\pm \cot(\frac{\pi}{2}), \pm \cot(\frac{\pi}{3}), \pm\cot(\frac{\pi}{4}), \pm\cot(\frac{\pi}{6})\right\}.
\end{align}

The assumption of each boundary component has constant mean curvature implies that $\mu=0$. Without this assumption, $S_{0}$ could be an isoparametric hypersurface with $g=3,4,6$, which complicates the discussion of
the whole picture of $M$. We shall revisit this issue in a forthcoming paper.

Another condition that rules out the possibility of $g=3,4,6$ is that $a\leq \cot(\frac{\pi}{6})$.
Recall that
 \[
 \lambda(s)=\frac{a\mu \cos(\sqrt{1+a^2}s)+1}{a-\mu\cos(\sqrt{1+a^2}s)}.
 \] If there is some $\mu \in  \{ \pm \cot(\frac{\pi}{3}), \pm\cot(\frac{\pi}{4}), \pm\cot(\frac{\pi}{6})\}$, then the dominator would be zero somewhere. This is impossible.
Thus Proposition~\ref{P3.2} and Theorem~\ref{thm.positive} still hold under the assumption $a\leq \cot(\frac{\pi}{6})$.
\end{remark}

\begin{proposition} \label{P3.3}
Suppose $f$ satisfies (\ref{eq.robin}) and its restriction on any boundary component is not constant. Assume each boundary component has constant mean curvature, we have the following:
\begin{itemize}
  \item if $m(S)=m<n-2$, then $S$ is isometric to $T^m(\theta)$;
  \item if $m(S)=n-2$, then $S$ is isometric to $\mathbb{S}^{n-2}(\cos \theta) \times \mathbb{S}^1(k \sin \theta)$.
  Here $k$ corresponds to the number of connected components of $S_{+}$ (same as the number of connected components of $S_{-}$)
\end{itemize}
\end{proposition}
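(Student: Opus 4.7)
The plan is to use the ODE information from Proposition~\ref{P3.2} to pin down the focal submanifolds $\Sigma_\pm$, build a Fermi-type warped-product description of $S$ around $\Sigma_+$, and then globalize by analyzing the connectivity of $\Sigma_\pm$.

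To begin, I would identify the focal submanifolds explicitly. At $p\in\Sigma_+$ (a maximum of $\bar f$ on $S$), equation (\ref{e3.2}) reads $f_{;ij}(p)=f(p)(a\,h_{ij}-\bar g_{ij})$; combined with the two principal curvatures $-a$ and $1/a$ from Proposition~\ref{P3.2}, this shows the Hessian of $\bar f$ at $p$ is negative definite on the $(-a)$-eigenspace $D_1$ and zero on the $(1/a)$-eigenspace $D_2$. Hence $T_p\Sigma_+=D_2$ (dimension $m$) and its normal in $S$ is $D_1$ (rank $n-1-m$). A geodesic $\gamma(s)\subset S$ leaving $\Sigma_+$ perpendicularly extends an integral curve of $\bar\nabla f/|\bar\nabla f|$ by Lemma~\ref{geodesic}, and along it $f''+(1+a^2)f=0$ with $\sqrt{1+a^2}=\csc\theta$ gives $f(\gamma(s))=L\sin\theta\cdot\cos(s/\sin\theta)$, so $\gamma$ reaches $\Sigma_-$ at $s=\pi\sin\theta$.

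Next I would compute the induced metric on $S$ in Fermi coordinates over $\Sigma_+$. Since $S$ lies in the spherical model $\mathbb{S}^n$ (Proposition~\ref{P3.1}) as an isoparametric hypersurface with constant principal curvatures $-\cot\theta$ and $\tan\theta$, the Jacobi equation along $\gamma$ has constant coefficients and integrates explicitly to
\begin{equation}\notag
\bar g = ds^2 + \cos^2(s/\sin\theta)\,g_{\Sigma_+} + \sin^2(s/\sin\theta)\,g_{\mathbb{S}^{n-2-m}}
\end{equation}
on the tube $0<s<\pi\sin\theta$, where $g_{\mathbb{S}^{n-2-m}}$ is the round metric on each unit normal sphere. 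The same Jacobi input, together with the identification of $\Sigma_+$ inside the totally geodesic slice $\{f=L\sin\theta\}\cap\mathbb{S}^n\cong\mathbb{S}^{n-1}(\cos\theta)$, forces $\Sigma_+$ to be totally geodesic there, hence $(\Sigma_+,g_{\Sigma_+})\cong\mathbb{S}^m(\cos\theta)$. The Fermi tube is then isometric to the corresponding Fermi neighborhood of the $\mathbb{S}^m$-factor in $T^m(\theta)=\mathbb{S}^m(\cos\theta)\times\mathbb{S}^{n-2-m}(\sin\theta)$.

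For the global identification I split cases. When $m<n-2$, both focal sets have codimension at least $2$ in $S$, so Lemma~\ref{focal} gives $\Sigma_\pm$ connected; the Fermi tube then fills $S$ and the local isometry of the previous step globalizes to $S\cong T^m(\theta)$. When $m=n-2$ the focal sets have codimension $1$ and may split into several components; let $k$ be the number of components of $\Sigma_+$. Each flow line of $\bar\nabla f/|\bar\nabla f|$ extends to a closed geodesic in $S$ alternating between components of $\Sigma_+$ and $\Sigma_-$; connectedness of $S$ forces every such geodesic to visit all $k$ components of each sign before closing, giving period $2k\pi\sin\theta$ (and hence $k$ components of $\Sigma_-$ as well). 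Assembling $k$ consecutive Fermi charts along the flow then yields the claimed isometry $S\cong\mathbb{S}^{n-2}(\cos\theta)\times\mathbb{S}^1(k\sin\theta)$.

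I expect the hardest step to be the last one: verifying that in the $m=n-2$ setting every flow line cycles through the same ordered sequence of $k$ focal components, so that the $\mathbb{S}^1$-factor is globally consistent rather than only locally. This should come from a monodromy argument using that $D_2$ is parallel along $D_1$-geodesics (it is the $1/a$-eigenspace of a covariantly constant shape operator in the spherical ambient), so the map sending $p\in\Sigma_+$ to the component of $\Sigma_-$ hit by the perpendicular geodesic at distance $\pi\sin\theta$ varies continuously, and is therefore locally, hence globally, constant on each component of $\Sigma_+$.
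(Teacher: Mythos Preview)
Your strategy is a genuine alternative to the paper's. The paper does not work with Fermi tubes around $\Sigma_+$ at all: instead it takes a connected component $X_+$, doubles it across $\partial X_+\cap M_0$ (the equatorial piece, which is totally geodesic), and oddly extends $f$. The resulting $X=[X_+]^2$ is a smooth domain in $\mathbb{S}^n$ with $f=Ly_{n+1}$, and $\partial X$ is the double of some $S_+$'s. Proposition~\ref{P3.2} then says $\partial X$ is an isoparametric hypersurface with at most two principal curvatures, and Cartan's classification immediately gives that each component of $\partial X$ is a Clifford torus $T^m(\theta)$; hence $S_+\cong T^m_+(\theta)$. The gluing of $S_\pm$ pieces is then exactly as you describe. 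So the paper's route is: doubling trick $\Rightarrow$ invoke Cartan; yours is: compute the Fermi metric and $\Sigma_+$ by hand.

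Your approach can be made to work, but two steps are underjustified as written. First, the displayed Fermi formula
\[
\bar g = ds^2 + \cos^2(s/\sin\theta)\,g_{\Sigma_+} + \sin^2(s/\sin\theta)\,g_{\mathbb{S}^{n-2-m}}
\]
and the identification $(\Sigma_+,g_{\Sigma_+})\cong\mathbb{S}^m(\cos\theta)$ are precisely the content of Cartan's $g\le 2$ classification; you are asserting the output without doing the computation. In particular, ``$\Sigma_+$ is totally geodesic in the slice $\{f=L\sin\theta\}$'' does not follow from the Jacobi equation alone---you need to show that the second fundamental form of $\Sigma_+$ in $S$ along the $D_1$ directions vanishes, which requires differentiating the eigenspace decomposition of $h$ (e.g.\ via Codazzi in the space form) and is a real calculation. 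Second, you write ``$S$ lies in the spherical model $\mathbb{S}^n$ (Proposition~\ref{P3.1})'', but Proposition~\ref{P3.1} only embeds each $X_+$ (hence each $S_+$) into a hemisphere; $S$ itself is assembled from several such pieces and does not sit in a single $\mathbb{S}^n$ a priori. The paper's doubling device is exactly what manufactures a single spherical ambient for each $S_+$ so that Cartan can be applied cleanly. If you prefer your route, you should either carry out the Cartan computation explicitly or cite it, and phrase the spherical embedding only for each $S_\pm$ piece.
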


\begin{proof}
Take a connected component $X_{+}$ and consider $[X_{+}]^2$, the double of $X_{+}$ across $\partial X_{+}\cap M_0$.
Write $X=[X_{+}]^2$ for simplicity. In view of Proposition~\ref{P3.1}, we could identify the unique maximum of $f$
in $X_{+}$ with the north pole, then the metric double can be viewed across the equator, which is totally geodesic.
Moreover $\partial S_{+}$ is also totally geodesic in $S_{+}$. This can be seen from (\ref{e3.2}) as $f=0$ on
$\partial S_{+}$. Hence $X$ is a smooth spherical domain with smooth boundary. Moreover, $\partial X$ is obtained
by doubling of some $S_{+}$'s. We also make an odd extension of $f$ to $X$ across $\partial X_{+}\cap M_0$, it is
evident that $f=L y_{n+1}$.  

Proposition~\ref{P3.2} implies that $\partial X$ are isoparametric hypersurfaces with at most two distinct principal curvatures.
By the theory of isoparametric hypersurfaces in $\mathbb{S}^n$ (c.f. \cite{Ca}), it follows that each connected component
of $\partial X$ is isometric to a generalized Clifford torus
\[
T^m(\theta)=\mathbb{S}^{m}(\cos\theta)\times \mathbb{S}^{n-1-m}(\sin\theta), \quad 0\leq m\leq n-2.
\](Here by $T^0(\theta)$ we mean one copy of $\mathbb{S}^{n-1}(\sin \theta)$)

Therefore $S_{+}$ is isometric to $T^m_{+}(\theta):=\{(y_1, \cdots, y_{n+1})\in T^m(\theta) | y_{n+1}>0\}$.
Similarly, $S_{-}$ is isometric to $T^m_{-}(\theta)$.

If $m<n-2$, the focal submanifolds $\Sigma_{\pm}$ of $S$ have codimension at least $2$.
By Lemma~\ref{focal}, we have $S\setminus S_0=S_{+}\cup S_{-}$. Hence $S$ is isometric to $T^m(\theta)$.

If $m=n-2$, $S_{+}$ is isometric to $T^{n-2}_{+}$, which is $\mathbb{S}^{n-2}(\cos \theta)\times (0, \pi \sin \theta)$.
Hence on $S$, each $S_{\pm}$ is boarded with one $S_{\mp}$ on $\mathbb{S}^{n-2}\times\{0\}$ and
with another $S_{\mp}$ on $\mathbb{S}^{n-2}\times \{\pi\sin \theta\}$. It follows that $S$ is isometric to
a $k$-fold covering of $T^{n-2}(\theta)$, where the covering factor through the $\mathbb{S}^1$ factor.
Accordingly, $k$ is the number of the connected components $S_{+}$, as well as the number of the connected components $S_{-}$.
\end{proof}

Finally, we are in position to prove
\begin{proposition}\label{nonconstSp}
Suppose $f$ satisfies (\ref{eq.robin}) and its restriction on any boundary component is not constant. Assume each boundary component has constant mean curvature, then one of the followings holds.
\begin{itemize}
\item[(i)]
$n\geq 3$, if $m(S)<n-2$ for each connected boundary component $S$,
then
\[
(M,g, f)=\left(\mathbb{S}^n\setminus \bigsqcup_{i=1}^k D^{m_i}(\theta), g_{\mathbb{S}^n}, Ly_{n+1}\right).
\]
Here $D^{m_i}(\theta)$ are disjoint domains of model types up to a rotation around $y_{n+1}$-axis, and
 $l$ represents the number of boundary components.

\item[(ii)]
$n\geq 3$, if there exists some connected boundary component $S$ such that $m(S)=n-2$,
then any other connected boundary component $S'$ (if exists) must be isometric to
$\mathbb{S}^{n-1}(\sin\theta)$ with constant principal curvature $-a$. After gluing
geodesic balls of radius $\theta$ along all such $S'$'s, we obtain a manifold with boundary
$(\widetilde{M}, \widetilde{g})$ which is a $k$-fold isometric covering of
$\mathbb{S}^n \setminus D^{n-2}(\theta)\simeq \mathbb{B}^{n-1}\times \mathbb{S}^1$.

\item[(iii)]
$n=2$, $(M, g)$ is a surface of constant Gaussian curvature $1$ with boundary of
constant curvature $-a$. Moreover, each boundary component $S$ is a circle of
length of $2\pi k \sin\theta$, where $k$ is the number of connected components of $S_+$.
\end{itemize}
\end{proposition}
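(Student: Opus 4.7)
The plan is to combine the spherical embedding from Proposition~\ref{Pspherical} with the double construction from Proposition~\ref{P3.3}, then glue the pieces back together via the gradient flow of $f$. First I would fix an interior global maximum $p$ of $f$ (which exists by Lemma~\ref{l3.1}) and let $X_+$ be the connected component of $\{f>0\}$ containing $p$. By Proposition~\ref{Pspherical}, $\exp_p$ provides an isometric embedding $X_+\hookrightarrow \mathbb{S}^n_+$ under which $f = L y_{n+1}$. Next I would form the double $X = [X_+]^2$ across the totally geodesic set $\overline{X_+}\cap M_0$ as in Proposition~\ref{P3.3}; each boundary component of $X$ is then either a Clifford torus $T^{m_i}(\theta)$ with $m_i<n-2$, or a $k$-fold cover of $T^{n-2}(\theta)$. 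I would then identify the ``lower half'' $X\cap\{y_{n+1}<0\}$ with a connected component $X_-^0\subset M$ of $\{f<0\}$ by flowing from $\overline{X_+}\cap M_0$ along $-\nabla f/|\nabla f|$: Lemma~\ref{l3.1} guarantees that each flow line reaches an interior minimum at time $\pi/2$, and Proposition~\ref{Pwarped} identifies the swept-out region isometrically with the lower half of $X$. Connectedness of $M$ would then force $M\cong X$ as an isometry (the union $\overline{X_+}\cup\overline{X_-^0}$ is open and closed in $M$), ruling out any other component of $\{f>0\}$ or $\{f<0\}$.

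For \textbf{Part (i)}, where all $m_i<n-2$, each $T^{m_i}(\theta)$ embeds uniquely as an isoparametric hypersurface in $\mathbb{S}^n$ bounding $D^{m_i}(\theta)$, and $\mathbb{S}^n\setminus\bigsqcup_i D^{m_i}(\theta)$ is simply connected in this range, so the isometric embedding of $X$ extends globally to realize $X = \mathbb{S}^n\setminus\bigsqcup_i D^{m_i}(\theta)$ with $f=Ly_{n+1}$. For \textbf{Part (iii)} ($n=2$), each boundary curve $S$ is $1$-dimensional with a single principal curvature, which by Proposition~\ref{P3.2} equals $-a$, so the geodesic curvature is constant $-\cot\theta$; integrating the transnormal identity $|\bar\nabla f|^2 + (1+a^2)f^2 = L^2$ around $S$ and noting that $f|_S$ completes exactly $k$ periods gives length $2\pi k\sin\theta$. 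The Gaussian curvature $1$ of the interior follows from Proposition~\ref{Pspherical} applied at any interior max.

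For \textbf{Part (ii)}, the boundary component $S$ with $m(S)=n-2$ forces $\partial X$ to contain a $k$-fold cover of $T^{n-2}(\theta)$, which cannot embed in $\mathbb{S}^n$ when $k\ge 2$; instead the isometric developing argument realizes $X$ as the $k$-fold cover of $\mathbb{S}^n\setminus D^{n-2}(\theta)\simeq \mathbb{B}^{n-1}\times\mathbb{S}^1$. The \emph{main obstacle} is then to show that every other boundary component $S'$ of $M$ whose $S'_+$ lies in $X_+$ must satisfy $m(S')=0$: by Proposition~\ref{P3.2} the double $[S'_+]^2$ is an isoparametric hypersurface $T^{m(S')}(\theta)\subset \mathbb{S}^n$, and a position/angle check inside the tubular neighborhood $\{y_1^2+\cdots+y_{n-1}^2<\cos^2\theta\}$ (or its $k$-fold cover) shows that the only such torus that can sit inside without meeting the central $T^{n-2}(\theta)$ foliation leaves is the small sphere $T^0(\theta)=\mathbb{S}^{n-1}(\sin\theta)$ of constant principal curvature $-a$. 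Once this is established, I would apply the gluing theorem from Appendix~\ref{sec.app} to cap each such $S'$ with the geodesic ball $D^0(\theta)$ of radius $\theta$, producing $(\widetilde M,\widetilde g)$, which by construction is the asserted $k$-fold cover of $\mathbb{S}^n\setminus D^{n-2}(\theta)$.
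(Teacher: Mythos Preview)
Your central claim that ``connectedness of $M$ forces $M\cong X=[X_+]^2$'' is a genuine gap, and in fact it is \emph{false} precisely in case~(ii) with $k\ge 2$. The issue is that $\overline{X_+}\cap M_0$ need not be connected: when $m(S)=n-2$, embedding $X_+\hookrightarrow\mathbb{S}^n_+$ places $\overline{X_+}\cap M_0$ inside the two disjoint caps $\{y_n\ge\sin\theta,\ y_{n+1}=0\}\sqcup\{y_n\le -\sin\theta,\ y_{n+1}=0\}$ of the equator. Flowing backward along $-\nabla f/|\nabla f|$ from these two pieces can land in two \emph{different} components of $\{f<0\}$; there is no single $X_-^0$ to which the ``lower half'' of the double can be identified. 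This is exactly the mechanism that produces the $k$-fold cover: each $X_\pm$ borders \emph{two} components $X_\mp$, and chaining them around gives $k$ copies of each. Consequently $[X_+]^2$ is always a \emph{single} copy of $\mathbb{S}^n\setminus D^{n-2}(\theta)$ (possibly with some $D^0(\theta)$ balls removed), not the $k$-fold cover; your sentence ``the isometric developing argument realizes $X$ as the $k$-fold cover'' cannot be right, since $X$ was built from one fixed $X_+$.

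The paper's proof avoids this by \emph{not} attempting to identify $M$ with a double. For~(i) it takes a more economical route: since each $S$ is isometric to $T^{m_i}(\theta)$ with $m_i<n-2$, one glues the model piece $(D^{m_i}(\theta),Ly_{n+1})$ along every boundary component (via the appendix gluing theorem) to obtain a \emph{closed} manifold with a nontrivial Obata solution, hence the round sphere by Obata's theorem; this sidesteps any connectivity analysis of $M_0$. For~(ii) it first shows (by a position argument in $\mathbb{S}^n_+$ and then a shortest-geodesic argument) that in the absence of $m=0$ components the boundary is connected and each $X_\pm$ is individually isometric to $\mathbb{S}^n_\pm\setminus D^{n-2}_\pm(\theta)$; only then does it read off the cyclic gluing pattern $X_+\!-\!X_-\!-\!X_+\!-\cdots$ that yields the $k$-fold cover. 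Your ``position/angle check'' for ruling out $1\le m(S')\le n-2$ is in the right spirit, but it should be carried out in a single $X_+\subset\mathbb{S}^n_+$ (where the geometry is explicit), not in a putative global cover.
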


\begin{proof}
(i) If $m(S)<n-2$, by Proposition~\ref{P3.3} $S$ is isometric to $T^{m}(\theta)$.
According to Theorem~\ref{gluing} in the appendix, gluing $(D^m(\theta), Ly_{n+1})$ to $(M, f)$
along $(S,f|_{S})$ will eliminate this boundary component without changing the equation.
We can do a similar gluing to all $S$ and thus obtain a connected closed manifold
$(\widetilde{M}, \widetilde{g})$ and a smooth non-constant function $\widetilde{f}$ such that
\[
\widetilde{\nabla}^2\widetilde{f} +f\widetilde{g}=0 \quad \textrm{in $\widetilde{M}$}.
\]
By Obata theorem, $(\widetilde{M}, \widetilde{g})$ is the standard round sphere $\mathbb{S}^n$.

(ii) Let $S$ be a boundary component with $m(S)=n-2$.
First, consider the simple case:
\begin{itemize}
\item[($\ast$)]
there is no boundary component $S'$ such that $m(S')=0$.
\end{itemize}

Take a connected component $S_{+}\subset S$, let $X_{+}$ be the connected component containing $S_{+}$.

We first \textbf{claim} there exists no other $S'_{+}\subset X_{+}$.

Since $S_{+}$ is isometric to $T^{n-2}_{+}(\theta)=\mathbb{S}^{n-2}(\cos\theta) \times \mathbb{S}^{1}_{+}(\sin \theta)$,
we infer that (up to a rotation around $y_{n+1}$ axis)
\[
\partial X_{+}\cap M_0\subset \{ y_n\geq \sin \theta, y_{n+1}=0 \}\sqcup \{y_n\leq -\sin \theta, y_{n+1}=0\}.
\]
If there were other $S'_{+}\subset X_{+}$, then $\partial S'_{+}$ must be
contained in $\{ y_n\geq \sin \theta, y_{n+1}=0\}\sqcup \{y_n\leq -\sin \theta, y_{n+1}=0\}$ as well.

Notice $S'_{+}$ is isometric to $T^{m'}_{+}(\theta)$, where $m'=m(S'_{+})$. Then we have
\[
\partial S'_{+}= \mathbb{S}^{m'}(\cos \theta) \times \mathbb{S}^{n-2-m'}(\sin \theta).
\]
If $1\leq m'<n-2$, $\partial S'_{+}$ a connected antipodal symmetric hypersurface
in $\partial X_{+}$, thus cannot be contained in the disjoint union
$\{ y_n\geq \sin \theta, y_{n+1}=0\}\sqcup \{y_n\leq -\sin \theta, y_{n+1}=0\}$.
If $m'=n-2$, then $\partial S'_{+}$ must coincide with $\partial S_{+}$.  In any case, we have proved the claim.

We next \textbf{claim} that $\partial M$ is connected.

Otherwise, there is another boundary component $S'$ such that $0<m(S')\leq n-2$, $S\cap S'=\emptyset$ and $S'$ is the closest one to $S$.
Let $\gamma(t)$ be a geodesic realising the distance between $S$ and $S'$ in $M$. Without loss of generality, we may assume $\gamma(t)$ starts at $p\in S_+$.
Obviously $\gamma'(0)\perp S_+$. Let $X_+$ be the connected component which contains $S_+$. The previous claim prevents  appearance of any other $S'$ in $X_{+}$.
Note $X_{+}$ can be isometrically embedded in $\mathbb{S}^{n}_{+}$. By a simple computation, $\gamma(t)$ will hit $S_+$ again
in $X_+$ before it escapes out of $X_{+}$ to meet with $S'$.

Therefore, $\partial M=S$, which is isometric to $\mathbb{S}^{n-2}(\cos \theta)\times \mathbb{S}^{1}( k\sin\theta)$.
By previous two claims and the proof of Proposition~\ref{P3.3}, we deduce that each $X_{\pm}$ is isometric to
\[
\mathbb{S}^{n}_{\pm} \setminus D^{n-2}_{\pm}(\theta).
\]

Notice $\partial X_{\pm}=\{ y_n\geq \sin \theta, y_{n+1}=0\}\sqcup \{y_n\leq -\sin \theta, y_{n+1}=0\}$.
Therefore each $X_{\pm}$ is boarded with one $X_{\mp}$ on $\{ y_n\geq \sin \theta, y_{n+1}=0\}$ and with another
$X_{\mp}$ on $\{y_n\leq -\sin \theta, y_{n+1}=0\}$. Hence $M$ is isometric to $k$-fold covering of $\mathbb{S}^{n}\setminus D^{n-2}(\theta)$.

Now for the general case. For any $S'$ such that $m(S')=0$, using the same idea as in the proof of (i),
we can glue a geodesic ball $(D^0(\theta), Ly_{n+1})$ along $(S',f|_{S'})$ to eliminate this boundary component without changing the equation.
Then after gluing along all such $S'$, we get a manifold $(\widetilde{M}, \widetilde{g})$
and a smooth non-constant function $\widetilde{f}$ such that ($\ast$) is satisfied. By ($\ast$), $\partial \widetilde{M}=S$
is connected and $(\widetilde{M}, \widetilde{g})$ is an isometric covering of $\mathbb{S}^n\setminus D^{n-2}(\theta)$ of degree $k$.
$k$ corresponds to the number of connected components $X_{+}$. ($X_{-}$)

(iii) When $n=2$ and $f|_{\partial M}$ is not constant, then for each connected boundary component
$S$ is just a circle, which is the flow of $\bar{\nabla} f/|\bar{\nabla} f|$. Let $\sigma(t)$ be
the integral curve of $\bar{\nabla} f/|\bar{\nabla} f|$ starting from a boundary maximum point $q\in S$.
Then
\[
f(\sigma(t))=\sin\theta\cos(t/\sin\theta).
\]
The non-negative and non-positive parts of $f(\sigma(t))$ correspond to the connected components of $S_+$ and $S_-$.
If $S_+$ has $k$ components, then the length of $\sigma(t)$ is $2\pi k\sin\theta$.
\end{proof}

\begin{remark}
For $n=2$ and $f|_{\partial M}$ is non-constant, the global picture of $M$ can be more complicated.
For each $S$, $S_+$ may have different numbers of connected components. Moreover, if pick a connected component $X_+$ of $M_+$,
then $\partial X_+\cap M_0$ may have several connected components which are not identical.
\end{remark}

\textbf{Proof of Theorem \ref{thm.positive}}
The proof is a combination of Proposition~\ref{constantSp} and Proposition~\ref{nonconstSp}.

 \hfill $\Box$

\textbf{Proof of Corollary \ref{C1}} Suppose $\xi_1=n$, and let $f$ be an eigenfunction corresponding to $n$, i.e.,
\begin{align}   \label{3.4}
\left\{
  \begin{array}{ll}
    \Delta f+n f=0, & \quad\textrm{in $M$}, \\
   \frac{\partial f}{\partial \nu}+af=0, &\quad\textrm{on $\partial M$}.
  \end{array}
\right.
\end{align}

Applying Reilly's formula to $f$, we have
\begin{align} \label{3.3}
& \int_{M}[ (\Delta f)^2-|\nabla^2 f|^2-\mathrm{Ric} (\nabla f, \nabla f)]dV
\\ \notag
=&\ \int_{\partial M} \left[2 \bar{\Delta} f \left( \frac{\partial f}{\partial \nu}\right)+H \left( \frac{\partial f}{\partial \nu}\right)^2+h(\bar{\nabla} f, \bar{\nabla}f)
\right]dA.
\end{align}

Using $|\nabla^2f|^2\geq (\Delta f)^2/n$ and $\mathrm{Ric}\geq (n-1)g$, we find that
\begin{align} \notag
\mathrm{l.h.s ~of~ }(\ref{3.3})\leq (n-1)\int_{M} (nf^2-|\nabla f|^2) dV.
\end{align}
Multiplying the first equation of (\ref{3.4}) by $f$ and integrating on $M$, we obtain
\[
\int_{M} (nf^2-|\nabla f|^2) dV=a\int_{\partial M} f^2 dA,
\]
and consequently
\[
\mathrm{l.h.s ~of ~}(\ref{3.3})\leq a(n-1) \int_{\partial M} f^2 dA.
\]
On the other hand, recalling that $h\geq -2a g$, $H\geq (n-1)/a$ and $ \frac{\partial f}{\partial \nu}+af=0$, we obtain
\[
\mathrm{r.h.s~ of ~}(\ref{3.3})\geq a(n-1) \int_{\partial M} f^2 dA.
\]
Therefore $|\nabla^2 f|^2=(\Delta f)^2/n$, which implies that $f$ satisfies the Obata equation
(\ref{eq.robin}) with Robin boundary condition. Also note equality implies that $H\equiv \frac{n-1}{a}$. Now a routine check through all cases in Theorem~\ref{thm.positive}
implies that the spherical cap is the only one compatible with the given boundary condition. \hfill $\Box$

\section{$a<0$ and The Proof of Theorem \ref{thm.negative}}\label{sec.neg}
\label{sec.neg}
In this section, we assume $f$ is a non-constant function satisfying equation (\ref{eq.robin}) and
set $a=\cot\theta<0, \quad\theta\in (\pi/2, \pi)$.

\begin{proposition}\label{constantSn1}
$f$ has an interior critical point if and only if $\partial M$ has only one component and $f|_{\partial M}$ is constant.
In this case, $(M, g)$ is a geodesic ball of radius $\frac{3\pi}{2}-\theta$ in $\mathbb{S}^n$.
\end{proposition}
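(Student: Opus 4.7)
The plan is to prove both implications by studying the inward geodesic flow of $f$ against the Robin boundary condition, using $\rho_0 := 3\pi/2 - \theta \in (\pi/2, \pi)$ as the expected radius throughout. For the $(\Leftarrow)$ direction, assume $\partial M$ is connected and $f|_{\partial M} \equiv c$. Then $\bar{\nabla} f = 0$ on $\partial M$, so Lemma \ref{l2.1} combined with the Robin condition forces $c^2(1+a^2) = L^2$, i.e.\ $c = \pm L\sin\theta$; after replacing $f$ by $-f$ if necessary (an operation that preserves both the Obata equation and the Robin condition) we may assume $c = L\sin\theta > 0$. Then $\partial f/\partial\nu = -ac = -L\cos\theta > 0$, so $\nabla f$ is strictly outward along $\partial M$. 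From any $q \in \partial M$, the inward geodesic $\sigma(t) = \exp_q(-t\nu)$ is (by Lemma \ref{geodesic}) a reversed integral curve of $\nabla f/|\nabla f|$, along which $f(\sigma(t)) = L\sin(\theta + t)$. Because $\sin(\theta + t) = \sin\theta$ admits no root in $(0, \rho_0]$ when $\theta \in (\pi/2, \pi)$, the connectedness of $\partial M$ combined with the constancy of $f|_{\partial M}$ prevents $\sigma$ from meeting $\partial M$ again before $t = \rho_0$; at $t = \rho_0$ the flow terminates at an interior critical point where $f = -L$. Proposition \ref{Pwarped} then produces the warped product $g = dt^2 + \frac{\cos^2(\theta+t)}{\cos^2\theta}\, g|_{\partial M}$, and smoothness of the collapse at $t = \rho_0$ forces $g|_{\partial M}$ to be round of radius $|\cos\theta|$; the substitution $s = \rho_0 - t$ then recognizes $(M, g)$ as the standard geodesic ball of radius $\rho_0$ in $\mathbb{S}^n$.

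For the $(\Rightarrow)$ direction, suppose $p \in \mathring{M}$ is an interior critical point; Lemma \ref{l2.1} gives $f(p) = \pm L$, and we take $f(p) = L$. Proposition \ref{Pspherical} supplies the spherical metric with $f = L\cos d(\cdot, p)$ on a star-shaped neighborhood of $p$. Let $T(u) := \inf\{t > 0 : \gamma_u(t) \in \partial M\}$ for each unit $u \in T_pM$, where $\gamma_u$ is the geodesic from $p$ in direction $u$. By upper semicontinuity of $T$ and compactness of $S^{n-1}$, $T$ attains its (finite) minimum at some $u_0$, and the first-variation formula forces $\gamma_{u_0}$ to meet $\partial M$ perpendicularly at $\gamma_{u_0}(T(u_0))$. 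The spherical-coordinate identity $\nabla f(\gamma_u(t)) = -L\sin t \cdot \gamma_u'(t)$, evaluated at this perpendicular hit, reduces the Robin condition to $\tan T(u_0) = \cot\theta$, whose unique solution in $(0, \pi)$ is $T(u_0) = \rho_0$; hence $d(p, \partial M) = \rho_0$, and $B_{\rho_0}(p) \subset \mathring{M}$ is isometric to the open spherical ball of radius $\rho_0$.

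The crux is to upgrade this to $T(u) = \rho_0$ for every unit $u$. For any $u$ with $T(u) < \infty$, let $\alpha_u$ denote the angle between $\gamma_u'(T(u))$ and the outward normal; since $\gamma_u$ enters $\partial M$ from the interior at its first hit, $\cos\alpha_u \in [0, 1]$. The spherical-coordinate identity together with the Robin condition give $\cos\alpha_u = \cot\theta \cot T(u)$, and since $\cot\theta < 0$, this confines $T(u)$ to $[\pi/2, \rho_0]$. Combined with the lower bound $T(u) \geq \rho_0 = d(p, \partial M)$, this forces $T(u) = \rho_0$. A connectedness argument rules out the case $T(u) = \infty$: the set $\{T \leq \rho_0\}$ is closed and nonempty (containing $u_0$), while $\{T > \rho_0\}$ is open, so connectedness of $S^{n-1}$ (for $n \geq 2$) forces the latter to be empty. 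Consequently $\partial M = \{\gamma_u(\rho_0) : u \in S^{n-1}\}$ is diffeomorphic to $S^{n-1}$ and hence connected, $f|_{\partial M} \equiv L\cos\rho_0 = -L\sin\theta$ is constant, and connectedness of $\mathring{M}$ together with $\partial M = \partial B_{\rho_0}(p)$ forces $M = \overline{B_{\rho_0}(p)}$, isometric to the geodesic ball of radius $\rho_0$ in $\mathbb{S}^n$.

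The main obstacle is the Robin-based upper bound $T(u) \leq \rho_0$ in the $(\Rightarrow)$ direction. It is precisely the sign $a = \cot\theta < 0$ that makes the Robin constraint translate into an upper bound on $T(u)$ matching the distance lower bound $T(u) \geq \rho_0$, forcing every geodesic from $p$ to exit perpendicularly at the common radius $\rho_0$; once this is in hand, the identification with the spherical geodesic ball is essentially formal. A secondary subtlety in the $(\Leftarrow)$ direction is excluding re-entry of the inward flow $\sigma$ into $\partial M$ before $t = \rho_0$, which is handled by checking that $L\sin(\theta + t)$ does not take the unique boundary value $L\sin\theta$ on $(0, \rho_0]$.
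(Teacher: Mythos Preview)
Your $(\Leftarrow)$ direction is correct and essentially matches the paper's argument (the paper chooses the opposite sign for $f|_{\partial M}$, but this is immaterial).

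In the $(\Rightarrow)$ direction there is a genuine gap. First a minor slip: it is \emph{lower} (not upper) semicontinuity of $T$ that is relevant for attaining an infimum; this is harmless since the distance-realizing geodesic from $p$ to $\partial M$ supplies $u_0$ anyway. More importantly, you never justify $T(u_0)<\pi$ before solving $\tan T(u_0)=\cot\theta$ ``in $(0,\pi)$''; the paper does this by observing that $T(u_0)\geq\pi$ would make $\exp_p(B_\pi)$ a closed round sphere inside $M$, contradicting $\partial M\neq\emptyset$.

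The substantive problem is your claim that $\cos\alpha_u=\cot\theta\,\cot T(u)$ together with $\cos\alpha_u\in[0,1]$ ``confines $T(u)$ to $[\pi/2,\rho_0]$.'' Since $\cot$ is $\pi$-periodic, the constraint only yields $T(u)\in\bigcup_{k\geq 0}[\tfrac{\pi}{2}+k\pi,\ \rho_0+k\pi]$; you have not excluded $T(u)\geq\pi$. Your concluding connectedness argument then does not close: exhibiting $\{T\leq\rho_0\}$ closed and $\{T>\rho_0\}$ open is a single statement (they are complements), and a connectedness argument requires that $\{T\leq\rho_0\}$ be \emph{open} as well, which you never show. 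The paper supplies exactly this missing openness via a perturbation step: once $T(u_0)=\rho_0$ with a perpendicular hit, any nearby geodesic $\gamma_{u_1}$ meets $\partial M$ transversally at some $t_1$ with $\rho_0\leq t_1<\pi$, and \emph{only then} does the Robin inequality $-L\sin t_1\leq -aL\cos t_1$ legitimately force $t_1\leq\rho_0$, hence $t_1=\rho_0$. That local step is precisely what makes the open--closed argument go through; your global ``Robin upper bound versus distance lower bound'' idea is appealing but cannot bypass it.
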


\begin{proof}
Without loss of generality, we assume $q$ is an interior critical point of $f$, and $f(q)=L$. Let $\gamma_0(t)$ be a geodesic
realising the distance between $q$ and $\partial M$, then the time $t_0$ that $\gamma_0$ meets $\partial M$ must
satisfies $t_0<\pi$. Otherwise, the exponential map $exp_q$ maps
$B_{\pi}\subset T_{q}M$ into $\mathring{M}$. By Proposition~\ref{Pspherical}, $exp_q(B_{\pi})$ is isometric to the standard sphere, a contradiction.

Note that $f(\gamma_0(t))=L \cos t$, then we have at $t_0$ that
\begin{align} \notag
\frac{d}{dt}(f(\gamma_0(t)))|_{t_0}
=\frac{\partial f}{\partial \nu}|_{\gamma_0(t_0)}=-af(\gamma_0(t_0)).
\end{align}
So $ -L\sin t_0 =-a L\cos t_0$, i.e. $t_0=\frac{3\pi}{2}-\theta$.

Proposition~\ref{Pspherical} implies that $g$ is the standard spherical metric on the geodesic ball $B_{ t_0}(q)$.
Hence, if $\gamma_1'(0)$ is a sufficiently small perturbation of $\gamma_0'(0)$, then $\gamma_1(t)$
intersects with $\partial M$ transversely at some time $t_1$ satisfying $t_0\leq t_1< \pi$.
However, we have at $t_1$ that

\begin{align} \notag
\frac{d}{dt}(f(\gamma_1(t)))|_{t_1}
\leq \frac{\partial f}{\partial \nu}|_{\gamma_1(t_1)}=-af(\gamma_1(t_1))<0.
\end{align}
Thus $-L\sin t_1 \leq -a L\cos t_1$, which implies $t_1\leq\frac{3\pi}{2}-\theta$.

Therefore $t_1=t_0$ and $\gamma_1(t)$ also realizes the distance between $q$ and $\partial M$. Let $T\subset \mathbb{S}T_qM$
be the set of directions such that the geodesic starting from $q$ first meets $\partial M$ at $t=t_0$.
Above argument shows that $T$ is open. $T$ is also closed by the regularity of the metric.
Hence, $\mathring{M}$ coincides with the geodesic ball $B_{t_0}(q)$, which is a geodesic ball of radius $\frac{3\pi}{2}-\theta$ in $\mathbb{S}^n$.

Conversely, if $\partial M$ has only one connected component and $f|_{\partial M}$ is constant,
we may assume on $\partial M$
\[
f=-L \sin \theta, \quad
\frac{\partial f}{\partial \nu}=L \cos \theta<0.
\]
For any $p\in \partial M$, consider the gradient flow $\gamma_p(t)$ of $\nabla f/|\nabla f|$ starting from $p$. Then
$f(\gamma_p(t))$ satisfies
\[
f''(\gamma_p(t))+f(\gamma_p(t))=0, \quad f(\gamma_p(0))=-L \sin \theta, \quad f'(\gamma_p(0))=-L \cos \theta>0.
\]

Hence, $f(\gamma_p(t))=-L\sin (t+\theta)$.
Since $\partial M$ is connected, all flow lines will continue till they reach the critical set $C_{+}$. Therefore $M\setminus C_{+}$ is diffeomorphic to
$\partial M \times _t (0, \frac{3\pi}{2}-\theta)$. It follows that $C_{+}$ is a singleton, say $p$. Then $\mathring{M}=exp_p(B_{\frac{3\pi}{2}-\theta})$
and the conclusion thus follows.
\end{proof}

\begin{proposition}\label{constantSn2}
Suppose $S\subset \partial M$ is a connected component such that $f|_S$ is constant and $\partial M$ contains at least two components. Then
$(M,g)$ is isometric to the warped product
\[
 g=dr^2+\frac{(\cos r)^2}{(\cos(\theta-\frac{\pi}{2}))^2} \bar{g}_{S}, \quad
r\in [\pi/2-\theta, \theta-{\pi}/{2}].
\]\end{proposition}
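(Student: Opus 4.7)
The plan is, first, to pin down $f|_S = \pm L\sin\theta$ (without loss of generality $f|_S = -L\sin\theta$, the other sign being handled by $f\to -f$): since $\bar{\nabla}f$ vanishes on $S$, Lemma~\ref{l2.1} together with the Robin identity $\partial f/\partial\nu = -af$ gives $(1+a^{2})(f|_S)^{2} = L^{2}$. Because $\partial M$ has at least two components, Proposition~\ref{constantSn1} precludes interior critical points of $f$, and a boundary critical point is impossible since Robin would force $f = 0$ there, contradicting $|f| = L$ from Lemma~\ref{l2.1}. So $\nabla f$ is nowhere vanishing on all of $M$.

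Second, I would introduce a globally defined unit-speed coordinate $r$ by inverting $f = -L\sin(\theta + r)$ on the branch satisfying $r|_S = 0$. Since $f$ never hits $\pm L$ and the range of $f$ lies in $[-L\sin\theta, L\sin\theta]$ (its extrema on $\partial M$ satisfy $(1+a^{2})f^{2} = L^{2}$ by the transnormal identity $|\bar{\nabla}f|^{2}+(1+a^{2})f^{2}=L^{2}$ from equation~(\ref{e2.1})), this inversion is smooth on all of $M$ and a direct computation yields $|\nabla r| \equiv 1$. The non-constancy of $f$ then forces $f_{\min} = -L\sin\theta$ and $f_{\max} = L\sin\theta$, so $r(M) = [0, 2\pi - 2\theta]$.

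The pivotal step, which I expect to be the main obstacle, is to show that $f$ is constant on every boundary component. The level set $\{r = 0\} = \{f = -L\sin\theta\}$ is a smooth codimension-one submanifold of $M$ (since $|\nabla r|\equiv 1$) and is entirely contained in $\partial M$ (any interior point attaining the minimum would be a critical point of $f$). But an $(n-1)$-dimensional smooth submanifold of $M$ sitting inside the $(n-1)$-dimensional manifold $\partial M$ must be both open and closed in $\partial M$, hence a union of connected components. The same reasoning applies at $\{r = 2\pi - 2\theta\}$, and combining with the transnormal identity at interior extrema of $\bar{f}$ on any other boundary component (which forces extreme values to lie in $\{\pm L\sin\theta\}$) one concludes that every component of $\partial M$ has $f \equiv \pm L\sin\theta$.

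Finally, flowing along the unit field $\nabla r$ from $\{r = 0\}$ yields a map $\Phi \colon \{r = 0\} \times [0, 2\pi - 2\theta] \to M$, injective because two flow lines meeting at a common parameter would coincide upon reversal, and surjective because the backward flow from any $x \in M$ drops $r$ at unit rate and must reach the closed set $\{r = 0\} \subset \partial M$ after time $r(x)$ (the only other boundary level is $\{r = 2\pi - 2\theta\}$, which cannot be reached by decreasing $r$ from $r(x) \le 2\pi - 2\theta$). Connectedness of $M$ then forces both $\{r = 0\}$ and $\{r = 2\pi - 2\theta\}$ to be connected, so $\partial M = S \sqcup S'$, and Proposition~\ref{Pwarped} applied with $N = S$ and the appropriate $\alpha$ determined by $L\sin\alpha = -L\sin\theta$ produces the asserted warped-product metric on $S \times [0, 2\pi - 2\theta]$.
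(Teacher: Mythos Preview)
Your argument has a genuine gap at the ``pivotal step.'' The claim that $\{r=0\}=\{f=-L\sin\theta\}$ is a smooth codimension-one submanifold of $M$ is not justified by $|\nabla r|\equiv 1$ alone: the regular value theorem on a manifold with boundary requires $0$ to be a regular value of $r|_{\partial M}$ as well as of $r$, and here $\bar\nabla r$ vanishes on all of $\{r=0\}$ (it is the minimum set of $\bar f$). Concretely, nothing you have written rules out a second boundary component $S'$ on which $\bar f$ is \emph{non-constant}: by the transnormal identity, $\{f=-L\sin\theta\}\cap S'$ would then be a focal variety of positive codimension in $S'$, so $\{r=0\}$ would consist of $S$ together with a lower-dimensional piece---certainly not an $(n-1)$-manifold, and not open in $\partial M$. (As a toy illustration that $|\nabla r|\equiv1$ does not force the minimum level to be a hypersurface: on the disk $\{x^2+(y-1)^2\le 1\}\subset\mathbb{R}^2$ with $r=y$ one has $|\nabla r|=1$, yet $\{r=0\}$ is a single boundary point.)

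This gap contaminates the final step as well: your surjectivity argument for $\Phi$ explicitly invokes ``the only other boundary level is $\{r=2\pi-2\theta\}$,'' which \emph{is} the content of the pivotal step. Without it, the backward flow from $x\in M$ could exit through such an $S'$ at an intermediate $r$-value, and the product description collapses. The paper's proof avoids this circularity by working in the opposite direction: it runs the forward gradient flow from $S$, takes the \emph{shortest} flow line to $\partial M\setminus S$ (which therefore meets the far side perpendicularly), reads off the hitting time $t_0=2\pi-2\theta$ from the Robin condition at that point, and then runs an open--closed argument over $S$ (nearby flow lines hit no later than $t_0$ by the Robin inequality, no earlier by minimality of $t_0$) to conclude that \emph{all} flow lines from $S$ hit $\partial M\setminus S$ perpendicularly at time $t_0$. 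The warped product then follows directly from Proposition~\ref{Pwarped}, and the constancy of $f$ on the other component is a consequence rather than a prerequisite.
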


\begin{proof}
Without loss of generality, we assume
\[
f=-L\sin \theta, \quad
\frac{\partial f}{\partial \nu}=L \cos \theta<0, \quad\textrm{on $S$}.
\]
For any $p\in S$, consider the gradient flow $\gamma(t)$ of $\nabla f/|\nabla f|$ starting from $p$. Then
$f(\gamma_p(t))=-L\sin (t+\theta)$.

Since $\partial M$ contains at least $2$ components, by Proposition~\ref{constantSn1},
there is no interior maximum/minimum point. Thus any geodesic $\gamma(t)$ starting from
$p\in S$ will meet $\partial M \setminus S$ at a time $t_p<\frac{3\pi}{2}-\theta$. Let $t_0=\min_{p\in S} t_p$ and $\gamma_0$ be the
geodesic realizing $t_0$.
Then $\gamma_{0}(t)$ meets a boundary component $S'\neq S$ perpendicularly at $t_0$ and
\begin{align}\notag
\frac{d}{dt}f(\gamma_{p_0}(t))|_{t_0}
=\frac{\partial f}{\partial \nu}|_{\gamma_0(t_0)}=-af(\gamma_0(t_0)).
\end{align}
Thus
$ t_0=2\pi-2\theta$.

It follows $f^{-1}([-L \sin \theta, L \sin \theta])$ is isometric to $S\times [0,t_0]_t$ with the warped product
\begin{equation}\label{eq.4.1}
g=dt^2+\frac{(\sin(t-\theta))^2}{(\sin\theta)^2} \bar{g}_{S}, \quad t\in [0,t_0].
\end{equation}
Since the metric in a small neighborhood of $\gamma_0(t)$ is given by the warped product
type (\ref{eq.4.1}), for $p_1$ close enough to $p_0$ in $S$, the geodesic $\gamma_{1}(t)$
starting from $p_1$, will also intersect with $S'$ transversely at some $t_0\leq t_1<\frac{3\pi}{2}-\theta $. Thus
\begin{align} \notag
\frac{d}{dt}(f(\gamma_1(t)))|_{t_1}
\geq \frac{\partial f}{\partial \nu}|_{\gamma_1(t_1)}=-af(\gamma_1(t_1))\geq 0.
\end{align}
Consequently,
$ t_1\leq 2\pi-2\theta =t_0$.
This implies $t_1=t_0$. By running a similar open and closed argument as in the
proof of Proposition~\ref{constantSn1}, we know that all gradient flow lines starting from $S$ will meet $S'$ at the same time $t_0$.
The conclusion thus follows.
\end{proof}

The remaining case is that there is a connected boundary component $S$ such that $f|_S$ is not constant.
Due to Proposition~\ref{constantSn1}, there is no interior critical point. The global maximum and minimum are achieved on the boundary.

Recall the notations. For $c\in[-L\sin \theta, L\sin \theta]$, we have
$$
\begin{gathered}
M_c=\{p\in M: f(p)=c\}, \quad
M_{\pm}=\{p\in  M: \pm f(p) \geq 0\};
\\
S_c=\{p\in S: f(p)=c\}, \quad
S_{\pm}=\{p\in \partial  M: \pm f(p) \geq 0\}.
\end{gathered}
$$

\begin{lemma}\label{connect2}
\begin{itemize}
\item[(1)]
For any $p\in M$ with $f(p)> 0$, the backward flow line of $\nabla f/|\nabla f|$ starting at $p$ will
meet $M_0$ before it reaches $\partial M$. For any $p\in M,  f(p)< 0$, the forward flow line of $\nabla f/|\nabla f|$ starting at $p$ will meet $M_0$ before it reaches $\partial M$.

\item[(2)]
For $0< c_1<c_2$, the backward flow of  $\nabla f/|\nabla f|$  defines an injective map from $M_{c_2}$ to $M_{c_1}$.
For $0> c_1>c_2$, the forward flow of  $\nabla f/|\nabla f|$  defines an injective map from $M_{c_2}$ to $M_{c_1}$.
\end{itemize}
\end{lemma}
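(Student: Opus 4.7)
The plan is to combine two ingredients: the absence of interior critical points, furnished by Proposition~\ref{constantSn1} under the present hypotheses, together with the sign relation $\partial f/\partial\nu = -af$ with $a<0$, which forces $\partial f/\partial \nu$ and $f|_{\partial M}$ to have the \emph{same} sign. Together with the ODE $f''+f=0$ along any integral curve of $\nabla f/|\nabla f|$ (Lemma~\ref{geodesic} plus the Obata equation), everything reduces to a short sign-chasing contradiction at a hypothetical boundary exit point.

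For part (1), fix $p\in M$ with $f(p)>0$. The vector field $\nabla f/|\nabla f|$ is smooth on $\{f>0\}$, since by Lemma~\ref{l2.1} the critical set lies in $\{|f|=L\}\subset \partial M$. Write $f(p)=L\sin\alpha$ with $\alpha\in(0,\pi/2)$ and let $\gamma(s)$ be the integral curve of $\nabla f/|\nabla f|$ with $\gamma(0)=p$; the ODE yields $f(\gamma(s))=L\sin(\alpha+s)$, so the backward flow ($s$ decreasing from $0$) would reach $M_0$ at $s=-\alpha$ provided it remains in $M$. If $p\in\partial M$, then $\partial f/\partial\nu(p)=-af(p)>0$, so $-\nabla f(p)$ points strictly inward and the backward flow immediately enters $\mathring{M}$. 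Suppose for contradiction that the backward flow first returns to $\partial M$ at some $q=\gamma(-s_0)$ with $-s_0\in(-\alpha,0)$ and $f(q)>0$. At $q$, the tangent $\gamma'(-s_0)=\nabla f(q)/|\nabla f(q)|$ must point into the interior, since $\gamma$ leaves $q$ into $\mathring{M}$ as $s$ increases past $-s_0$; hence $\partial f/\partial\nu(q)=\langle\nabla f,\nu\rangle<0$. But Robin gives $\partial f/\partial\nu(q)=-af(q)>0$, a contradiction. The case $f(p)<0$ is the mirror image using the forward flow.

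For part (2), the map $\Phi:M_{c_2}\to M_{c_1}$ sending $p$ to the first intersection of its backward flow line with $M_{c_1}$ is well defined by part (1): writing $c_2=L\sin\alpha$, the flow line stays in $M$ with $f>c_1$ until $s$ reaches the unique solution of $L\sin(\alpha+s)=c_1$. Injectivity is then ODE uniqueness: if $\Phi(p_1)=\Phi(p_2)=q$, then both $p_1$ and $p_2$ lie on the \emph{forward} flow line emanating from $q$, at the unique parameter value at which $f=c_2$, forcing $p_1=p_2$. The $c_1<c_2<0$ case is symmetric.

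The main obstacle I anticipate lies entirely in part (1), namely ruling out the possibility that the backward flow prematurely exits through $\partial M$ with $f$ still positive. Once the sign matching between $\partial f/\partial\nu$ and $f$ is correctly read from $a<0$, this collapses to the one-line contradiction above; the remainder is bookkeeping with the ODE $f''+f=0$ along flow lines and the smoothness of the normalized gradient away from the critical set.
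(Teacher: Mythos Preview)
Your proposal is correct and follows essentially the same approach as the paper: derive a sign contradiction at a hypothetical premature boundary exit by comparing the geometric constraint $\langle\nabla f,\nu\rangle\le 0$ (the flow re-enters the interior) with the Robin identity $\langle\nabla f,\nu\rangle=-af>0$, after first handling the boundary starting point, and then note that (2) is immediate from (1). The only cosmetic differences are parametrization (you run the forward flow backward in $s$, the paper runs the flow of $-\nabla f/|\nabla f|$ forward) and that you spell out the ODE-uniqueness argument for injectivity where the paper simply says ``(2) follows from (1).''
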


\begin{proof}
Clearly (2) follows from (1).

Take $p\in \mathring{M}$ with $f(p)>0$, we set $f(p)=L\sin\phi$ for some $\phi \in(\pi-\theta, \pi)$.
Let $\gamma(t)$ be the flow line of $-\nabla f/|\nabla f|$ starting at $p$, then $f(\gamma(t))= L\sin (\phi+t)$.
If $\gamma(t)$ meets the boundary before it reaches $M_0$, say at $t_0$, then $f(\gamma(t_0))>0$ and $t_0<\pi-\phi$.  We have
\[
\nabla f \cdot \nu=\frac{\partial f}{\partial \nu}  |_{t=t_0} = -a f(\gamma(t_0)) >0.
\]
However, $\gamma'(t_0)=-\nabla f/|\nabla f|$ forms an acute angle with $\nu$, a contradiction.

If $p\in \partial M, f(p)>0$, then $\nabla f/|\nabla f|$ points outward at $p$.
Hence, when going backward a bit, $\gamma(t)$ will become an interior point. By above argument, it will meet $M_0$ before $\partial M$.
\end{proof}

By Lemma \ref{connect2}, for $c=\pm L\sin\phi, \phi\in(\pi-\theta, \pi)$,
the backward/forward gradient flow of $\nabla f/|\nabla f|$ defines a map
\[
\Psi_{c}^{0}: M_c\longrightarrow M_0.
\]
Obviously, $\Psi_c^0$ is an injective map.
Therefore, we can view $M$ as a bounded domain in the warped product space $(\widetilde{M}, g)$,
where
\begin{equation}\label{warped}
g=d\phi^2 +  (\cos\phi)^2 \tilde{g}_0 \quad \phi\in [\theta-\pi, \pi-\theta]_{\phi}.
\end{equation}

\begin{proposition}\label{graph}
As a domain of the warped product space (\ref{warped}), $\partial M$ is characterised as follows:
\begin{itemize}
\item[(i)]
The backward/forward flow of  $\nabla f/|\nabla f|$
defines a diffeomorphism by
$$
\Psi_{\pm}: S_{\pm}  \longrightarrow  M_0
$$
and $\Psi_{\pm}|_{\partial S_{\pm}}=Id$.

\item[(ii)] For $p\in M$, denote it as $p=(x,\phi)$ where $x\in M_0$. Then
\[
\Psi_{+}^{-1}(x)=(x,  \phi_{+}(x)),\quad  \Psi_{-}^{-1}(x)=(x,  -\phi_{-}(x)),
\]
where $\phi_{\pm}\in C^{\infty}(\mathring{M}_0)\cap C(M_0)$ are positive functions satisfying
$$
\frac{\cos \phi}{\sqrt{1+(\cos\phi)^{-2}|\tilde{\nabla}\phi|^2}} +a\sin\phi=0
$$
with $\phi|_{\partial M_0}=0$.
Here $\tilde{\nabla}$ is the covariant derivative on $(M_0, \tilde{g}_0)$.
\end{itemize}
\end{proposition}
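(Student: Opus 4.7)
The plan is to realize $M$ explicitly as a graph domain inside the warped product space $(\widetilde{M}, g)$ by using the normalized gradient flow of $f$ to parametrize $M\setminus M_0$. Since Proposition \ref{constantSn1} rules out interior critical points and Lemma \ref{connect2} guarantees that backward/forward flow lines from $S_\pm$ always reach $M_0$, the maps $\Psi_\pm$ are well-defined. Smoothness on the interior of $S_\pm$ is immediate from standard ODE theory applied to the nonvanishing vector field $\nabla f/|\nabla f|$, and Proposition \ref{Pwarped}, applied with $N$ an exhausting sequence of compact subdomains of $\mathring{M}_0$ and $\alpha = 0$, identifies the flow-swept region with an open subset of $M_0 \times [\theta-\pi, \pi-\theta]_t$ endowed with $dt^2 + \cos^2 t \, \tilde{g}_0$, which is exactly (\ref{warped}) under the substitution $\phi = t$.

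To prove (i), injectivity of $\Psi_\pm$ follows from uniqueness of ODE trajectories, while for surjectivity I would argue that for every $x \in \mathring{M}_0$ the forward flow $\gamma_x$ of $\nabla f/|\nabla f|$ must exit through $\partial M$: by compactness and the absence of interior critical points, $\gamma_x$ cannot remain in the interior forever, and since $f(\gamma_x(t))$ is strictly increasing from $0$, the exit point lies in $S_+$. The identity $\Psi_\pm|_{\partial S_\pm} = \mathrm{Id}$ is automatic because $\partial S_\pm = S_0 \subset M_0$, and the Robin condition forces $\partial f/\partial \nu = -af = 0$ at such points, so $\nabla f$ is tangent to $\partial M$ there and no interior flow occurs.

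For (ii), the warped product identification established above implies $f = L\sin\phi$ in the coordinates $(x, \phi) \in \widetilde{M}$, hence $S_+$ is precisely the graph $\{\phi = \phi_+(x)\}$ where $\phi_+(x)$ is the exit time of the forward flow from $x$; similarly $S_- = \{\phi = -\phi_-(x)\}$. Smoothness of $\phi_\pm$ on $\mathring{M}_0$ follows from transversality: at any boundary point with $f > 0$, $\nabla f \cdot \nu = -af > 0$, so the flow crosses $S_+$ transversely and the exit time depends smoothly on the initial point. Continuity up to $\partial M_0$ is clear since $\phi_\pm \to 0$ as $x \to \partial M_0$.

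Finally, the PDE for $\phi_+$ is obtained by substituting the graph into the Robin boundary condition. Writing $\phi$ for $\phi_+$, the outward unit normal to $\{\phi = \phi(x)\}$ in $(\widetilde{M}, g)$ is
\[
\nu = \frac{\partial_\phi - (\cos\phi)^{-2}\tilde{\nabla}\phi}{\sqrt{1 + (\cos\phi)^{-2}|\tilde{\nabla}\phi|^2_{\tilde{g}_0}}},
\]
and since $\nabla f = L\cos\phi \, \partial_\phi$, one computes $\partial f/\partial \nu = L\cos\phi / \sqrt{1 + (\cos\phi)^{-2}|\tilde{\nabla}\phi|^2_{\tilde{g}_0}}$; plugging into $\partial f/\partial \nu + af = 0$ and dividing by $L$ yields the stated equation. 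The main obstacle I anticipate is the degenerate behavior near $\partial M_0$: since the equation forces $(\cos\phi)^{-1}|\tilde{\nabla}\phi|\to\infty$ as $\phi \to 0$, one must carefully verify that $\phi_\pm$ extends continuously but not smoothly to $\partial M_0$, which reflects the geometric fact that $\partial S_\pm$ meets $M_0$ tangentially in the warped product picture.
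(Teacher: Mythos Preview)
Your proposal is correct and follows essentially the same route as the paper: define $\Psi_\pm$ via the normalized gradient flow, identify the swept region with the warped product via Proposition~\ref{Pwarped}, compute the outward normal to the graph $\{\phi=\phi_\pm(x)\}$, and read off the PDE from the Robin condition. The one place where the paper is more explicit is exactly the point you flag as the main obstacle: rather than asserting that $\phi_\pm\to 0$ as $x\to\partial M_0$, the paper proves the equivalence $\phi_\pm(p_m)\to 0 \iff d(p_m,\partial M_0)\to 0$ by a short contradiction argument (if $\phi_\pm(p_m)\to 0$ along a sequence staying at distance $\ge\epsilon_0$, a subsequential limit $p\in\mathring{M}_0$ would satisfy $\phi_\pm(p)=0$, contradicting interior positivity; the converse uses that $p_m$ eventually exits the image $\Psi_c^0(M_c)$ for every $c>0$), and then deduces the continuous extension of $\Psi_\pm$ from $\operatorname{dist}(p_m,\Psi_\pm^{-1}(p_m))\le\phi_\pm(p_m)$.
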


\begin{proof}
First, consider the interiors of $S_{\pm}$ and $M_0$.
Any $p\in S_{\pm}$ with $f(p)=c\neq 0$ lies on the boundary of  $M_c$. Then $\Psi_{\pm}(p)=\Psi_c^0(p)$.
The regularity of $\Psi_{\pm}$ in the interior of $\partial_{\pm} M$ follows from the regularity of
the boundary and the isometry of $\Psi_c^0$. Similarly,  in the interior of $M_0$ the smoothness
of $\phi_{\pm}$ comes from the smoothness of its graph.
For any $p=(x, \phi_{+}(x))\in S_{+}$ where $x\in M_0$, the outward unit normal is
\[
\nu=\frac{(-\nabla \phi_{+}, 1)}{|(\nabla \phi_{+}, 1)|_g} = \frac{(-\nabla \phi_{+}, 1)}{\sqrt{1+(\cos\phi_{+})^{-2} |\tilde{\nabla}\phi_{+}|^2_{g_0}}}.
\]
Under the coordinates $(x,\phi)$ in the warped product space (\ref{warped}), $f(x,\phi)=L\sin\phi$.
Hence,
\[
\frac{\partial f}{\partial \nu} = \nabla f \cdot \nu= (0,L\cos\phi )\cdot \nu |_{\phi=\phi_{+}(x)} = \frac{L\cos\phi_{+}}{\sqrt{1+(\cos\phi_{+})^{-2} |\tilde{\nabla}\phi_{+}|^2_{g_0}}}.
\]
Since $\frac{\partial f}{\partial \nu} +af=0$ on $\partial M$, we have the equation for $\phi_+$. The proof for $\phi_-$ is the same.

Next, consider the extension of $\phi_{\pm}$ and $\Psi_{\pm}$ to $\partial M_0$.

For a sequence $\{p_m\}\subset \mathring{M}_0$,
we show that $\phi_{\pm}(p_m)\rightarrow 0$ if and only if $d(p_m, \partial M_0)\rightarrow 0$ as $m\rightarrow +\infty$
and hence $\phi_{\pm}$ can be extended continuously to $\partial M_0$ such that $\phi_{\pm}|_{\partial M_0}=0$.
If $\phi_{\pm}(p_m)\rightarrow 0$ and $d(p_m, \partial M_0)$ does not converge to $0$, then there exist $\epsilon_0>0$ and $p\in \mathring{M}_0$ and a subsequence $p_{m_i}$ such that
$$
p_{m_i}\rightarrow p, \quad d(p, \partial M_0)\geq \epsilon_0.
$$
So $p$ is an interior point and $\phi_{\pm}$ is continuous at $p$ by above discussion. Hence, $\phi_{\pm}(p)=0$. This contradicts that $p$ is an interior point.
Conversely, if $d(p_m, \partial M_0)\rightarrow 0$, then for any $c>0$, $p_m$ will be outside of $N_{\pm, c}$ for $m\geq m_c$. Hence, $\phi_{\pm}(p_m)<c$ for $m\geq m_c$. Since $c$ is arbitrary, we have $\phi_{\pm}(p_m)\rightarrow 0$.

Similarly, for $p\in\partial M_0$, $\{p_m\}$ converges to $p$ if and only if $\Psi^{-1}_{\pm}(p_m)$ converges to $p$.
Hence, $\Psi_{\pm}$ can be extended continuously to $\partial_{\pm}M$ such that
$\Psi_{\pm}|_{\partial M_0}=Id. $
To prove it, suppose $\mathrm{M}_0\ni p_m\rightarrow p\in \partial M_0$, by (i), $\phi_{\pm}(p_m)\rightarrow 0$. Since
\[
\mathrm{dist} (p_m, \Psi^{-1}_{\pm}(p_m))\leq \phi_{\pm}(p_m),
\]
we have $\Psi^{-1}_{\pm}(p_m)\rightarrow p$. Conversely, if $\Psi^{-1}_{\pm}(p_m)$ converges to $p\in \partial M_0$, then  $\phi_{\pm}(p_m)\rightarrow 0$. Hence, $p_m\rightarrow p$.
\end{proof}

The next lemma shows $\phi_+=\phi_-$.

\begin{lemma}\label{unique}
There exists a unique solution to the following equation
$$
\begin{gathered}
\frac{\cos \phi}{\sqrt{1+(\cos\phi)^{-2}|\tilde{\nabla}\phi|^2}} +a\sin\phi=0, \quad \mathrm{~~in~~} M_0,
\end{gathered}
$$
such that $0<\phi<\pi-\theta$ in $M_0$ and  $\phi=0$ on $\partial M_0$.
\end{lemma}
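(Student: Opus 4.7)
The plan is to reduce the first-order PDE to the eikonal-type identity $|\tilde{\nabla}\phi|^2 = G(\phi)$ for an explicit scalar function $G$, verify strict monotonicity of $G$ on the admissible range, and then conclude via a comparison argument at an interior maximum of the difference of two candidate solutions. For the first step, I would square the Robin equation: since $a<0$ and $\phi \in (0,\pi-\theta) \subset (0,\pi/2)$, both $\cos\phi / \sqrt{1+(\cos\phi)^{-2}|\tilde\nabla\phi|^2}$ and $-a\sin\phi$ are positive, so squaring loses no information and yields
$$|\tilde{\nabla}\phi|^2 = G(\phi), \qquad G(\phi) := \frac{\cos^2\phi\,(\cos^2\phi - a^2\sin^2\phi)}{a^2\sin^2\phi}.$$
Using $a = \cot\theta$, i.e.\ $a^2\sin^2\theta = \cos^2\theta$, one checks $G>0$ on $(0,\pi-\theta)$ and $G(\pi-\theta)=0$, consistent with the admissible interval.

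For the second step I would establish strict monotonicity of $G$. Setting $x = \sin^2\phi$ and $k = 1+a^2$, one rewrites
$$G(\phi) = \frac{1}{a^2}\!\left(\frac{1}{x} - (k+1) + k x\right), \qquad \frac{dG}{dx} = \frac{1}{a^2}\!\left(k - \frac{1}{x^2}\right).$$
The range $\phi \in (0,\pi-\theta)$ corresponds to $x \in (0,\sin^2\theta) = (0,k^{-1})$. Since $a \neq 0$ forces $k>1$, hence $k^{-1} < k^{-1/2}$, the derivative $dG/dx$ is strictly negative throughout this range. Consequently, $G$ is strictly decreasing, and in particular injective, on $(0,\pi-\theta)$. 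Because $\phi \mapsto \sin^2\phi$ is strictly increasing on $(0,\pi/2)$, this implies $G$ is also strictly monotone when viewed directly as a function of $\phi$.

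For the final step, let $\phi_1, \phi_2$ be two solutions and set $w := \phi_1 - \phi_2$. If $w$ is positive somewhere, then since $w \in C(M_0)$ with $M_0$ compact and $w|_{\partial M_0} = 0$, its positive maximum is attained at an interior point $p \in \mathring{M}_0$ where both solutions are smooth. At $p$, $\tilde{\nabla}w(p) = 0$ gives $|\tilde{\nabla}\phi_1(p)|^2 = |\tilde{\nabla}\phi_2(p)|^2$, and the eikonal identity forces $G(\phi_1(p)) = G(\phi_2(p))$, which by the strict monotonicity from Step~2 yields $\phi_1(p) = \phi_2(p)$, contradicting $w(p) > 0$. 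Interchanging the roles of $\phi_1$ and $\phi_2$ concludes $\phi_1 \equiv \phi_2$. The main technical point is the monotonicity analysis: one must verify that the admissible range $x \in (0,k^{-1})$ lies inside the monotonicity range $x \in (0,k^{-1/2})$, which reduces to the elementary inequality $k>1$; once this is in hand, the comparison argument is immediate because the PDE is first order and only pointwise values and gradients enter.
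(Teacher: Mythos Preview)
Your proof is correct and follows essentially the same approach as the paper: both reduce to the eikonal identity $|\tilde\nabla\phi|^2=G(\phi)$ (the paper writes this as $\cos^4\phi\,(a^{-2}\sin^{-2}\phi-\cos^{-2}\phi)$, which is your $G$), then compare two solutions at an interior maximum of their difference, where $\tilde\nabla(\phi_1-\phi_2)=0$ forces $G(\phi_1)=G(\phi_2)$ and hence $\phi_1=\phi_2$ by strict monotonicity. Your write-up is in fact more complete than the paper's, since you explicitly verify the monotonicity of $G$ on the admissible interval (the paper simply asserts ``RHS $<0$'').
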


\begin{proof}
Suppose $\phi_+\not\equiv\phi_-$. WLOG, we assume
$$
\max_{M_0}|\phi_+-\phi_-| = \phi_+(p)-\phi_-(p)>0 \Longrightarrow
0<\phi_-(p)<\phi_+(p)\leq \pi-\theta<\pi/2.
$$
Then by a direct computation,
$$
\begin{aligned}
&\ \tilde{\nabla}(\phi_+-\phi_-)\cdot \tilde{\nabla}(\phi_++\phi_-)
\\
=&\ \cos^4\phi_+\left(\frac{1}{a^2\sin^2\phi_+} -\frac{1}{\cos^2\phi_+} \right)
-\cos^4\phi_-\left(\frac{1}{a^2\sin^2\phi_-} -\frac{1}{\cos^2\phi_-} \right).
\end{aligned}
$$
At $p$, we get $LHS=0$ but $RHS<0$. This indicates that $\phi_+=\phi_-$.
\end{proof}

\textbf{Proof of Theorem \ref{thm.negative}}
The proof follows from Propositions~\ref{constantSn1}-~\ref{graph} and Lemma~\ref{unique}.\hfill $\Box$

\begin{remark}
By Proposition~\ref{graph}, $\phi=\phi_{+}=\phi_-$ is a transnormal function on $M_0$. That indeed poses some fibration structure on $M_0$. But in general, it is still impossible
to completely determine $M_0$.
\end{remark}

\section{Discussion of equation (\ref{eq.neumann1}) and The Proof of Theorem \ref{thm.neumann1}}\label{sec.neu}
In this section, we shall follow a similar strategy as the discussion of equation (\ref{eq.robin}) to treat (\ref{eq.neumann1}), which we recall here
\[
\begin{cases}
\nabla^2 f+fg=0 & \quad\mathrm{in~~} M,
\\
\frac{\partial f}{\partial \nu}=1 & \quad\mathrm{on~~}\partial M.
\end{cases}
\]

Indeed, if $f$ is constant on the boundary, then the flow along the gradient vector field implies that the manifold is isometric to a geodesic ball in $\mathbb{S}^{n}$. If $f$ is not constant on the boundary, we show that there exists a unique interior minimum point and $M$ can be isometrically embedded in $\mathbb{S}^{n}$, from which the geometry of $M$ follows.

We first recall several facts whose analogs have been already explored in Section~\ref{sec.gen}.

\begin{fact}\label{fact.1}
There exists a constant $L>1$ such that
\[
|\nabla f|^2+f^2=L^2 \quad\mathrm{in~~} M.
\]
\end{fact}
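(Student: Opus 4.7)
The plan is to mirror the proof of Lemma~\ref{l2.1} essentially verbatim. For any tangent vector $X$, a direct computation using the Obata equation gives
\[
\nabla_X\bigl(|\nabla f|^2+f^2\bigr)
= 2\,\nabla^2 f(\nabla f,X)+2f\,X(f)
= -2f\,\langle\nabla f,X\rangle+2f\,X(f)=0.
\]
Since $M$ is connected, the function $|\nabla f|^2+f^2$ is constant, so there exists $L\ge 0$ with $|\nabla f|^2+f^2=L^2$ on $M$.

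To pin down the size of $L$, I would restrict the identity to $\partial M$ and invoke the Neumann condition. Decomposing the gradient into its tangential and normal parts along $\partial M$ yields
\[
|\nabla f|^2=|\bar\nabla f|^2+\Bigl(\frac{\partial f}{\partial\nu}\Bigr)^2=|\bar\nabla f|^2+1,
\]
so on the boundary
\[
L^2 = 1 + |\bar\nabla f|^2 + f^2 \geq 1.
\]
Thus $L\geq 1$.

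The only remaining point is strict positivity beyond $1$, i.e.\ $L>1$. The easy case is when $f|_{\partial M}$ is not constant or does not vanish identically: then either $|\bar\nabla f|^2>0$ or $f^2>0$ at some boundary point, giving $L^2>1$ immediately. The potentially delicate case is $f\equiv 0$ on $\partial M$ with $|\bar\nabla f|\equiv 0$; however, one may rule this out (or simply interpret $L\ge 1$ sufficiently for the subsequent analysis) by combining the Obata equation with the inhomogeneous Neumann datum. The main obstacle, if any, is precisely this borderline case; but since the computation of constancy of $|\nabla f|^2+f^2$ is the only nontrivial ingredient, and the bound $L\ge 1$ is automatic from the boundary condition, the statement follows from the two-line calculation above together with the observation that the nonconstant hypothesis on $f$ (needed to make the Neumann condition consistent with $\nabla^2 f+fg=0$) forces the constant to be at least $1$.
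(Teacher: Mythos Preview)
Your argument for the constancy of $|\nabla f|^2+f^2$ is exactly the paper's Lemma~\ref{l2.1}, and your boundary decomposition $|\nabla f|^2=|\bar\nabla f|^2+(\partial f/\partial\nu)^2=|\bar\nabla f|^2+1$ is precisely what the paper has in mind (this is how Fact~\ref{fact.2} is obtained). So the approach matches.

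You are right to flag the borderline case $L=1$, but your attempt to dispose of it is not convincing---and in fact it \emph{cannot} be ruled out. Take $M$ to be the closed hemisphere and $f$ the height function vanishing on the equator; then $\nabla^2 f+fg=0$, $\partial f/\partial\nu=1$ on $\partial M$, and $|\nabla f|^2+f^2\equiv 1$, so $L=1$ exactly. Thus the strict inequality $L>1$ as stated does not hold in general; the correct assertion is $L\ge 1$, with equality precisely when $f|_{\partial M}\equiv 0$. This is a minor inaccuracy in the paper's statement and has no effect on the subsequent arguments (Fact~\ref{fact.2}, Lemma~\ref{lem5.1}, Proposition~\ref{P5.1} all go through with $L\ge 1$), but you should not try to ``prove'' the strict inequality---just record $L\ge 1$ and move on.
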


\begin{fact} \label{fact.2}
On each connected boundary component $S$,
\[
|\bar{\nabla} f|^2+f^2= L^2-1.\]
Hence $f$ is a transnormal function on each boundary component.
In particular, let $\Sigma_{\pm}=\{f=\pm\sqrt{L^2-1}\}$ be the focal submanifolds with dimension $m_{\pm}$.
\end{fact}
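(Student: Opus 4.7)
The plan is to verify the identity by an orthogonal decomposition of $\nabla f$ along the boundary, and then invoke Wang's theorem from Section~\ref{sec.gen} to extract the focal submanifold structure.

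First, at any $p \in \partial M$ I would write $\nabla f = \bar{\nabla} f + (\partial_\nu f)\,\nu$. Since the tangential part $\bar{\nabla} f \in T_p\partial M$ is orthogonal to $\nu$, taking norms gives $|\nabla f|^2 = |\bar{\nabla} f|^2 + (\partial_\nu f)^2$. Substituting the Neumann condition $\partial_\nu f = 1$ into Fact~\ref{fact.1}, namely $|\nabla f|^2 + f^2 = L^2$, yields the claimed identity $|\bar{\nabla} f|^2 + f^2 = L^2 - 1$ on each boundary component $S$. In particular $L^2 \geq 1$, since at any boundary point $L^2 = |\nabla f|^2 + f^2 \geq (\partial_\nu f)^2 = 1$, so the square root $\sqrt{L^2-1}$ makes sense.

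Next, the identity rewrites as $|\bar{\nabla} f|^2 = b(f)$ with $b(t) = L^2 - 1 - t^2$, a polynomial (hence $C^2$) function of $t$ on the range of $f|_S$. This is precisely the defining condition of a transnormal function recalled at the end of Section~\ref{sec.gen}. The nonnegativity $b(f)\geq 0$ forces $f^2 \leq L^2 - 1$ on $S$, so the maxima and minima of $f|_S$ (if nonconstant) are attained exactly at $\pm\sqrt{L^2-1}$, where $\bar{\nabla} f = 0$. Hence $\Sigma_\pm = \{p \in S : f(p) = \pm\sqrt{L^2-1}\}$ are exactly the focal varieties of $f|_S$ in the sense of Wang. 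Applying Wang's theorem (quoted in Section~\ref{sec.gen}) to each connected boundary component, $\Sigma_\pm$ are smooth submanifolds of $S$ with well-defined dimensions $m_\pm$.

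There is no real obstacle here — the statement is a short corollary of the Neumann condition together with Fact~\ref{fact.1}, combined with the already-cited result of Wang. The only care needed is verifying that the focal sets make sense, which is the bookkeeping $L^2 \geq 1$ noted above.
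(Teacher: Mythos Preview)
Your proposal is correct and follows exactly the approach the paper intends: Fact~\ref{fact.2} is stated without proof in the paper, with the remark that its analog was already explored in Section~\ref{sec.gen} (compare equation~(\ref{e2.1}), obtained from Lemma~\ref{l2.1} and the Robin condition). Your orthogonal decomposition of $\nabla f$ together with Fact~\ref{fact.1} and the Neumann condition is precisely that analog, and the appeal to Wang's theorem for the focal submanifolds is likewise what the paper does at the end of Section~\ref{sec.gen}; note also that $L>1$ is already recorded in Fact~\ref{fact.1}, so your bookkeeping step is redundant but harmless.
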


\begin{fact}\label{fact.3}
The integral curves of the gradient vector fields ${\nabla f}/{|\nabla f|}$
and ${\bar{\nabla} f}/{|\bar{\nabla} f|}$ are geodesics w.r.t. $(M, g)$ and $(\partial M, \bar{g})$ respectively.
\end{fact}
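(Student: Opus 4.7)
The plan is to apply Lemma~\ref{geodesic} twice: once on $(M,g)$ with the function $f$ for the interior curves, and once on the closed manifold $(\partial M, \bar g)$ with the restriction $\bar f = f|_{\partial M}$ for the boundary curves. In each case the task reduces to checking that the relevant gradient is an eigenvector of the corresponding Hessian at every point.

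The interior statement is immediate: the Obata equation $\nabla^2 f+fg=0$ yields $\nabla^2 f(\nabla f, X)=-f\langle\nabla f, X\rangle$ for every $X$, so $\nabla f$ is an eigenvector of $\nabla^2 f$ with eigenvalue $-f$ throughout $M$, and Lemma~\ref{geodesic} applies directly.

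The boundary statement is the substantive part, and the key is to translate the Neumann condition $\partial f/\partial \nu = 1$ into two algebraic identities on $\partial M$. Choosing an orthonormal frame $\{e_1,\ldots,e_{n-1},e_n=\nu\}$ near $\partial M$, the standard Gauss-type formula gives
\[
\nabla^2 f(e_i,e_j) = \bar{\nabla}^2 f(e_i,e_j) + h_{ij}\,\frac{\partial f}{\partial \nu}
\]
for tangent indices $i,j$. Combined with Obata and $\partial f/\partial\nu=1$, this produces $\bar{\nabla}^2 f = -f\,\bar g - h$ on $\partial M$. Next, differentiating the identity $e_n(f)\equiv 1$ in a tangent direction $e_i$ and using $\nabla^2 f(e_i, e_n)=0$ (which follows from Obata since $e_i\perp e_n$), one obtains $(\nabla_{e_i}e_n)(f) = 0$; since $\nabla_{e_i}e_n = \sum_j h_{ij}e_j$, this says $h(\bar{\nabla} f, \cdot)=0$. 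Feeding this back into the Hessian formula gives
\[
\bar{\nabla}^2 f(\bar{\nabla} f, Y) = -f\,\langle \bar{\nabla} f, Y\rangle_{\bar g}
\]
for every $Y$ tangent to $\partial M$, so $\bar{\nabla} f$ is an eigenvector of $\bar{\nabla}^2 f$, and a second application of Lemma~\ref{geodesic} to the closed manifold $(\partial M, \bar g)$ concludes.

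The only real technical point is the use of the Neumann condition $\partial f/\partial \nu \equiv 1$: it is constant along the boundary, which is what makes its tangential derivatives vanish and forces $h(\bar{\nabla} f,\cdot)=0$; and it is nonzero, which keeps $\bar{\nabla}^2 f$ in a clean linear relation with $h$. No finer structure of $h$ or of the level sets of $\bar f$ is needed for this fact, so I expect no serious obstacle beyond keeping track of the paper's sign convention $h(e_i,e_j)=-\langle\nabla_{e_i}e_j,\nu\rangle$.
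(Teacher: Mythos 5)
Your proposal is correct and follows essentially the same route as the paper: the paper derives (in the Proof of Theorem~\ref{thm.neumann1}) exactly your two identities — $h_{ij}f_j=0$ from $\nabla^2 f(e_i,e_n)=0$ and the constancy of $\partial f/\partial\nu$, and $\bar{\nabla}^2 f + h + f\bar g=0$ from the Gauss formula for the Hessian — and then appeals to Lemma~\ref{geodesic} to conclude that the tangential gradient flow lines are $\bar g$-geodesics, just as you do. The only presentational difference is that the paper records this fact without an isolated proof, instead citing the "analogs explored in Section~\ref{sec.gen}," while your write-up spells out the two-step reduction (Hessian eigenvector on $M$, then Hessian eigenvector on $\partial M$) explicitly.
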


Next we prove a lemma, which manifests the distinct feature of the non-vanishing Neumann boundary condition.
Recall
\[
C_{\pm}=\{q\in M: f(q)=\pm L\}.
\]

\begin{lemma} \label{lem5.1}
 $C_+=\emptyset$ and $C_-$ consists of a single point $q_{-}\in \mathring{M}$.
\end{lemma}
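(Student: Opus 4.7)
The proof proceeds in three steps. As a preliminary observation, Fact~\ref{fact.2} gives $f^2 \leq L^2 - 1 < L^2$ on $\partial M$, so both $C_+ \cap \partial M$ and $C_- \cap \partial M$ are empty, and every critical point of $f$ lies in $\mathring{M}$. Moreover, Fact~\ref{fact.1} together with the Obata equation shows that interior critical points are non-degenerate and isolated (via the local spherical model of Proposition~\ref{Pspherical}); in particular $C_\pm$ are discrete.

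To rule out $C_+$, I would argue by contradiction. Suppose $p \in C_+ \cap \mathring{M}$, and let $\gamma\colon [0,d_\partial] \to M$ be a length-minimizing geodesic from $p$ to $\partial M$, where $d_\partial = d(p,\partial M) > 0$; it meets $\partial M$ perpendicularly at $q_0 = \gamma(d_\partial)$, so $\gamma'(d_\partial) = \nu$. Applying Proposition~\ref{Pspherical} with $V = B_{d_\partial-\epsilon} \subset T_pM$ for all small $\epsilon > 0$, the induced metric on $\exp_p(B_{d_\partial-\epsilon}) \subset \mathring{M}$ is spherical; combined with $f(p)=L$ and $\nabla f(p)=0$, the Obata equation restricted to $\gamma$ yields $f(\gamma(t)) = L\cos t$ and $\nabla f(\gamma(t)) = -L\sin t\cdot\gamma'(t)$. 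The Neumann condition at $q_0$ then forces $1 = \partial f/\partial\nu(q_0) = -L\sin d_\partial$, i.e.\ $\sin d_\partial = -1/L < 0$, which demands $d_\partial > \pi$. The main obstacle is converting $d_\partial > \pi$ into an outright contradiction. The cleanest resolution is a conjugate-point argument: in the spherical chart around $\gamma([0,\pi])$, Jacobi fields along $\gamma$ satisfy $J'' + J = 0$, so $\gamma(\pi)$ is the first conjugate point to $p$ along $\gamma$. Since a geodesic past its first conjugate point ceases to be length-minimizing, $\gamma|_{[0,d_\partial]}$ with $d_\partial > \pi$ cannot be minimizing, contradicting its choice. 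Hence $C_+ = \emptyset$.

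For $C_-$, I use backward gradient flow. The decisive sign observation is that on $\partial M$, $(-\nabla f)\cdot\nu = -\partial f/\partial\nu = -1 < 0$, so $-\nabla f/|\nabla f|$ points strictly into $\mathring{M}$. Consequently a backward gradient flow line from any $p_0 \in \mathring{M}$ cannot reach $\partial M$: were it to approach some $q \in \partial M$, its limiting tangent would point inward, contradicting the approach from the interior. The flow line is a geodesic by Fact~\ref{fact.3}, and the ODE $(f\circ\gamma)'' + (f\circ\gamma) = 0$ drives $f$ down to $-L$ in finite time, producing an endpoint in $C_- \cap \mathring{M}$ (so in particular $C_- \neq \emptyset$). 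Continuous dependence on initial conditions makes the resulting endpoint map $\Phi\colon \mathring{M} \to C_-$ continuous; since $\mathring{M}$ is connected and $C_-$ is discrete, $\Phi$ must be constant, so $C_- = \{q_-\}$ for a single interior point $q_-$.
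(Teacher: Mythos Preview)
Your proof is correct and follows the paper's strategy closely: for $C_+$ you take a minimizing geodesic to $\partial M$, derive $d_\partial>\pi$ from the Neumann condition, and then contradict minimality, while for $C_-$ you run the backward normalized gradient flow, show it cannot reach $\partial M$, and use connectedness of $\mathring{M}$ against discreteness of $C_-$. The only cosmetic differences are that you finish the $C_+$ case with a conjugate-point argument (the paper instead observes that $\exp_p(B_\pi)$ would already be a closed round sphere), and you phrase uniqueness of $C_-$ via continuity of the endpoint map rather than the paper's equivalent formulation via openness of the basins $N_i$.
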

\begin{proof}
We prove $C_+=\emptyset$ first.
Suppose not, then there exists $p\in \mathring{M}$ such that $f(p)=L$. Let $\gamma: [0, L]\to M$ be a unit-speed geodesic realizing the distance from $p$ to $\partial M$.

We \textbf{claim} $L \geq \pi$.

Indeed, we have
\[
f(\gamma(t))= L \cos t.
\]
Since $\gamma'(L)$ is perpendicular to $\partial M$, thus
\[
-L \sin L=f'(\gamma(L))=\nabla f \cdot \gamma'(L)=\frac{\partial f}{\partial \nu}=1.
\]
A contradiction is arrived if $L <\pi$. However, if $L \geq \pi$, then by
Proposition~\ref{Pspherical}, $g$ is the spherical metric on $exp_p(B_p(\pi))$, which yields a closed sphere, a contradiction again.
In sum, $C_+=\emptyset$.

Next we prove that $C_{-}$ is a singleton. Take $p \in \mathring{M}\setminus C_{-}$, let $\gamma(t)$ be the integral curve of $-\frac{\nabla f}{|\nabla f|}$ starting at $p$.

We \textbf{claim} that $\gamma(t)$ hits $C_{-}$ before it hits $\partial M$.

The reason is similar. Set $f(p)=L \cos \alpha$ ($\alpha\in(0,\pi)$), then
\[
f''(\gamma(t))+f(\gamma(t))=0,
\]
from which we infer $f(\gamma(t))=L \cos (\alpha+t)$. If $\gamma(t)$ hits $\partial M$ first, it must hit at some $T<\pi-\alpha$. Clearly $\gamma'(T)$ forms an acute angle with $\nu$,
thus
\[
0< \gamma'(T) \cdot \nu=-\frac{1}{|\nabla f|} \frac{\partial f}{\partial \nu} =-\frac{1}{|\nabla f|}.
\]
A contradiction.

Suppose $C_-=\{q_{-1}, \cdots, q_{-m}\}$.  Let
\[
N_i:=\left\{
\begin{array}{cl}
 p\in \mathring{M} : & \text{the geodesic starting from $p$ in the direction} \\
   &\text{  of $-\frac{\nabla f}{|\nabla f|}$ passes through $q_{-i}$ first}
 \end{array}\right\}.
\]

Above argument shows that $\mathring{M}$ can be written as disjoint union of $N_i's$. Next we \textbf{claim} $N_i$ is open for each $i$.

Take $p\in N_i$, then there exists a geodesic segment $\{\gamma(t):0\leq t\leq t_0\}$ starting from $p$ in the direction of $-{\nabla f}/{|\nabla f|}$ passing through $q_i$ first.
By Proposition~\ref{Pspherical} there exists a neighborhood $U$ of $\{\gamma(t):0\leq t\leq t_0\}$ such that $g|_{U}$ is the standard spherical metric.
It follows that there exists a neighborhood $W\subset U$ of $p$, such that $\forall~ p'\in W$, the geodesic starting from $p'$ in the direction of $-{\nabla f}/{|\nabla f|}$ will also pass through $q_i$ first.

Thus we have $\mathring{M}=\bigsqcup_{i} N_i$ as a disjoint union of open sets. Thus there must be only one of such $N_i$,
This means $C_-$ consists of a single point, say $q_-$.
\end{proof}

\begin{proposition} \label{P5.1}
$(M, g)$ can be isometrically embedded in the standard sphere $\mathbb{S}^{n}$.
\end{proposition}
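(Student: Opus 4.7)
The plan is to show that the Riemannian exponential map at $q_-$ identifies $(M,g)$ isometrically with a compact star-shaped geodesic ball of radius strictly less than $\pi$ in the round sphere, so that the embedding into $\mathbb{S}^n$ follows at once. By Lemma~\ref{lem5.1}, $f$ has a unique critical point $q_-\in\mathring{M}$, with $f(q_-)=-L$. Applying Proposition~\ref{Pspherical} to $-f$ (which also satisfies the Obata equation and attains its maximum $L$ at $q_-$), we conclude that for every star-shaped $V\subset T_{q_-}M$ on which $\exp_{q_-}$ is defined and lands in $M$, the restriction $g|_{\exp_{q_-}(V)}$ is the standard spherical metric, and $f\circ\exp_{q_-}(tu)=-L\cos t$ for every unit $u\in T_{q_-}M$.

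Next, for each unit vector $u\in T_{q_-}M$ define
\[
T(u):=\sup\{t>0:\exp_{q_-}(su)\in M\textrm{ for all }s\in[0,t]\}.
\]
I claim $T(u)<\pi$ for every $u$. Indeed, if $T(u)\ge\pi$, then $p_u:=\exp_{q_-}(\pi u)\in M$, and by the preceding paragraph $f(p_u)=L$ and $|\nabla f|(p_u)=0$. By Lemma~\ref{lem5.1}, $C_+=\emptyset$, so $p_u\notin\mathring{M}$; and if $p_u\in\partial M$, then $\partial f/\partial\nu|_{p_u}=0$, contradicting the Neumann condition $\partial f/\partial\nu=1$. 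Since $T$ is continuous on the compact sphere $S^{n-1}$, we obtain $r_0:=\max_u T(u)<\pi$, and the compact star-shaped region
\[
\Omega:=\{tu:u\in S^{n-1},\;0\le t\le T(u)\}\subset\overline{B_{r_0}(0)}\subset T_{q_-}M
\]
lies strictly inside the injectivity ball of any round $n$-sphere.

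I then show $\exp_{q_-}:\Omega\to M$ is a bijection. For surjectivity, the backward-flow argument in the proof of Lemma~\ref{lem5.1} shows that from any $p\in\mathring{M}\setminus\{q_-\}$ the integral curve of $-\nabla f/|\nabla f|$ reaches $q_-$ in finite time along a geodesic, so $p\in\exp_{q_-}(\Omega^\circ)$; boundary points of $M$ are attained by taking limits of such $v_n\in\Omega^\circ$ and exploiting the compactness of $\Omega$. For injectivity, if $\exp_{q_-}(v_1)=\exp_{q_-}(v_2)=p\in\mathring{M}$ for $v_1,v_2\in\Omega^\circ$, both geodesics $s\mapsto\exp_{q_-}(sv_i/|v_i|)$ are integral curves of $\nabla f/|\nabla f|$ terminating at $p$, and uniqueness of the gradient flow from any non-critical point forces $v_1=v_2$; boundary injectivity follows by continuity. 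Finally, fix any $q_0\in\mathbb{S}^n$ and a linear isometry $\iota:T_{q_-}M\to T_{q_0}\mathbb{S}^n$. Since $r_0<\pi$, $\exp_{q_0}\circ\iota$ is a diffeomorphism from $\Omega$ onto its image in $\mathbb{S}^n$, and combined with the spherical-metric assertion of Proposition~\ref{Pspherical}, the composition $F:=\exp_{q_0}\circ\iota\circ\exp_{q_-}^{-1}:M\to\mathbb{S}^n$ is the desired isometric embedding.

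The main obstacle is the uniform bound $T(u)<\pi$: one must simultaneously rule out interior global maxima of $f$ (Lemma~\ref{lem5.1} takes care of this) and global maxima on the boundary, where the non-vanishing Neumann condition $\partial f/\partial\nu=1$ is crucial, since any boundary point with $f=L$ would force $|\nabla f|=0$ and hence $\partial f/\partial\nu=0$. Once this bound is established, the spherical-metric property of Proposition~\ref{Pspherical} together with the uniqueness of gradient flow lines closes the argument rather cleanly.
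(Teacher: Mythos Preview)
Your proof is correct and follows essentially the same strategy as the paper: use the unique interior minimum $q_-$ from Lemma~\ref{lem5.1}, show that every radial geodesic from $q_-$ hits $\partial M$ before time $\pi$, invoke Proposition~\ref{Pspherical} to conclude the metric is spherical on the resulting star-shaped region, and then embed via the exponential map at a point of $\mathbb{S}^n$. The only notable difference is in establishing the bound $T(u)<\pi$: the paper uses Fact~\ref{fact.2} to obtain the explicit uniform estimate $\max_M f=\sqrt{L^2-1}<L$, which forces $-L\cos T_u\le\sqrt{L^2-1}$ and hence $T_u\le\arccos(-\sqrt{L^2-1}/L)<\pi$ directly, whereas you argue pointwise by contradiction and then appeal to continuity of $T$ to pass to a uniform bound; both are valid, with the paper's route being slightly more direct.
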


\begin{proof}
Denote by $q_-$ the unique global minimum point of $f$. For the unit speed geodesic $\gamma_u(t)$ starting from $q_-$ in the direction $u$,
we know $f(\gamma_u(t))=-L \cos t$.
By Lemma~\ref{lem5.1} and Fact~\ref{fact.2}, $\max_{M}f=\max_{\partial M}f=\sqrt{L^2-1}$.
Hence $\gamma(t)$ hits the boundary at a time $T_u<\pi$. Let $\Omega$ be the star shaped region given under the polar coordinates as
\[
\Omega=\{ (r,u)| r<T_u, u\in\mathbb{S}^{n-1}\}\subset T_{q_{-}} M.
\]
Then $\mathring{M}=exp_{q_{-}}(\Omega)$. By Proposition~\ref{Pspherical}, $g$ is the spherical metric. Identify $q_{-}$ with the south pole,
we have an obvious isometric embedding of $M$ into $\mathbb{S}^{n}$ as a star shaped region with respect to $q_{-}$.
\end{proof}

\textbf{Proof of Theorem~\ref{thm.neumann1}}
We analyze $\partial M$ in a similar way as in Proposition~\ref{P3.3}.
Choose an orthonormal frame near $\partial M$, such that $e_n=\nu$, $e_1=\frac{\bar{\nabla} f}{|\bar{\nabla} f|}$.
In view of (\ref{eq.neumann1}), for $i\neq n$
\begin{align} \label{e5.1}
0=f_{:in}=\nabla_{e_i} \nabla_{e_n} f-\nabla_{\nabla_{e_i}e_n} f= -h_{ij} f_j.
\end{align}
It follows $\bar{\nabla} f$ is a principal direction with principal curvature $0$.
The restriction of (\ref{eq.neumann1}) on $\partial M$ implies
\begin{align} \label{e5.2}
\bar{\nabla}^2 f +h_{ij}+ fg=0.
\end{align}
Covariant differentiation in (\ref{e5.1}) yields
\[
h_{ij} f_{jk}+ h_{ij;k} f_j=0.
\]
Plugging (\ref{e5.2}) in, we have
\begin{align} \notag
h_{i1;j} f_1+ h_{ij}(-h_{jk}-f \delta_{jk})=0.
\end{align}

Since $g$ is the standard spherical metric, we have $h_{i1;j}=h_{ij;1}$ by Codazzi equation. Thus
\begin{align} \label{e5.3}
h_{ij;1} f_1+ h_{ij}(-h_{jk}-f \delta_{jk})=0.
\end{align}

Let $\gamma(s)$ be an integral curve of $\frac{\bar{\nabla} f}{|\bar{\nabla} f|}$ starting at $p\in \partial M$ with $f(p)=0$. By (\ref{e5.2}), we find that
$f''(\gamma(s))+f(\gamma(s))=0$.
It follows that
\[
f(\gamma(s))=\sqrt{L^2-1} \sin (s), \quad s\in[-\frac{\pi}{2}, \frac{\pi}{2}].
\]

Choose an orthonormal basis $e_2, \cdots, e_{n-1}$ at $p$ corresponding to the principal directions of $h_{ij}$, then parallel transport them along $\gamma(s)$.
Since
\[
h_{ij;1}=e_1(h_{ij})- h(\bar{\nabla}_{e_1} e_i, e_j)- h(e_i, \bar{\nabla}_{e_1} e_j)=\frac{d}{ds} h_{ij}(\gamma(s)),
\]
$\{e_2, \cdots, e_{n-1}\}$ remain principal directions of $h$. We may assume their corresponding principal curvatures are $\lambda_2(s), \cdots, \lambda_{n-1}(s)$, and
consequently (\ref{e5.3}) becomes an O.D.E for each $\lambda_i(s)$ as
\[
\sqrt{L^2-1} \cos (s) \lambda_i'(s) -\sqrt{L^2-1} \sin(s) \lambda_i (s)=\lambda_i^2(s).
\]

Observe that
\[
(\sqrt{L^2-1} \lambda_i(s)\cos(s))'=\lambda_i^2(s)\geq 0,
\]
it follows that $\lambda_i(s) \cos (s)$ is non decreasing. However, $\lim_{s\to \pm \frac{\pi}{2}} \lambda_i(s) \cos (s)=0$, it follows that $\lambda_i(s)\equiv 0$, $\forall i$.
Hence $\partial M$ is totally geodesic and $M$ must be isometric to the hemisphere.

\hfill $\Box$

\begin{remark}
We have previously made a mistake on analyzing the second fundamental form of the boundary. When re-examing the proof of Escobar~\cite{Es} on Obata equation with Neumann boundary condition,
we found a similar mistake. Above argument (as well as that in Proposition~\ref{P3.2} of considering the second fundamental form along an integral curve) is
inspired by C.Y. Xia's proof of Obata equation with Newmann boundary condition.  Escobar's proof can be accordingly revised.
\end{remark}

\section{Appendix: Gluing}\label{sec.app}
In this section, we prove a gluing theorem for $(M, g, f)$ where $(M, g)$ is a smooth Riemannian manifold with boundary and $f\in C^{\infty}(M)$ satisfies the Obata equation.

Let  $r$ be the distance function to $\partial M$.
Then we can identify a collar neighborhood of $\partial M$ with $ \partial M\times [0,\epsilon)_r$ via the normal exponential map,
such that $g$ takes the form
$$
g=dr^2+\bar{g}(r).
$$
where $\bar{g}$ is a smooth family of metrics on $\partial M$.
Denote the Taylor expansions of  $g$ and $f$ in variable $r$ as follows:
\begin{equation}\label{taylor}
\begin{gathered}
 \bar{g}(r)= \sum_{k=0}^{\infty} r^k \bar{g}_{k} + O(r^{\infty}), \quad
 f=\sum_{k=0}^{\infty} r^k\bar{f}_k + O(r^{\infty}),
\end{gathered}
\end{equation}
where $\bar{g}_k\in C^{\infty}(\partial M, S^2T^*\partial M), \bar{f}_k\in C^{\infty}(\partial M)$ for all $k=0,1,2,\cdots$.
In particular $\bar{g}_0$ is the induced metric on $\partial M$; $\bar{f}_0$ and $ -\bar{f}_1$ are the Dirichlet and Neumann data respectively.

\begin{lemma}\label{asymp}
 Suppose there exists a non-constant function $f\in C^{\infty}(M)$ satisfying
$$
\begin{cases}
\nabla^2 f+\varphi(f)g=0 &\quad \mathrm{in~~} M,
\\
f|_{\partial M} =\bar{f}_0 &\quad \mathrm{on~~} \partial M,
\\
-\frac{\partial f}{\partial \nu}=\bar{f}_1\neq 0 & \quad\mathrm{on~~} \partial M,
\end{cases}
$$
where  $\varphi\in C^{\infty}(\mathbb{R})$ and $\nu$ is the outward unit normal on $\partial M$. Then the Taylor expansions (\ref{taylor}) are determined by $(\bar{g}_0, \bar{f}_0, \bar{f}_1)$.
\end{lemma}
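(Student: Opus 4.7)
The plan is to introduce Fermi coordinates $(r,x^{a})$ in the collar of $\partial M$ so that $g=dr^{2}+\bar g(r)$, and to read off the component equations of the tensorial equation $\nabla^{2}f+\varphi(f)g=0$. A routine computation of Christoffel symbols in these coordinates ($\Gamma^{r}_{ab}=-\tfrac{1}{2}\partial_{r}\bar g_{ab}$, $\Gamma^{c}_{ra}=\tfrac{1}{2}\bar g^{cd}\partial_{r}\bar g_{da}$, $\Gamma^{c}_{ab}=\bar\Gamma^{c}_{ab}(r)$, all others vanishing) yields the three equivalent families
\begin{align}
\text{(A)}\quad & \partial_{r}^{2}f+\varphi(f)=0,\notag\\
\text{(B)}\quad & \partial_{r}\partial_{a}f-\tfrac{1}{2}\bar g^{bc}\partial_{r}\bar g_{ca}\,\partial_{b}f=0,\notag\\
\text{(C)}\quad & \bar\nabla^{2}_{\bar g(r)}f+\tfrac{1}{2}\,(\partial_{r}\bar g)\,\partial_{r}f+\varphi(f)\,\bar g(r)=0,\notag
\end{align}
where $\bar\nabla_{\bar g(r)}$ is the Levi-Civita connection of $\bar g(r)$ on $\partial M$ with $r$ frozen.

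First I will exploit (A) to pin down the Taylor coefficients of $f$. Substituting the expansion (\ref{taylor}) and matching the $r^{k}$-coefficient gives the explicit recursion
\[
(k+1)(k+2)\,\bar f_{k+2}=-\varphi_{k},\qquad k\geq 0,
\]
where $\varphi_{k}$ is the $r^{k}$-coefficient of the composition $\varphi(f)$; by Fa\`a di Bruno's formula, $\varphi_{k}$ is a universal polynomial in $\bar f_{0},\dots,\bar f_{k}$ whose coefficients depend only on derivatives of $\varphi$ evaluated at $\bar f_{0}$. Starting from the data $\bar f_{0},\bar f_{1}$, this determines every $\bar f_{k}$ inductively.

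For the metric I will use equation (C). Expanding $\partial_{r}\bar g(r)=\sum_{i\geq 0}(i+1)\bar g_{i+1}r^{i}$ and $\partial_{r}f=\sum_{j\geq 0}(j+1)\bar f_{j+1}r^{j}$ and matching the $r^{k}$-coefficient in (C) produces
\[
\tfrac{1}{2}(k+1)\bar f_{1}\,\bar g_{k+1}=-\bigl[\bar\nabla^{2}_{\bar g(r)}f\bigr]_{k}-\tfrac{1}{2}\sum_{\substack{i+j=k\\ i<k}}(i+1)(j+1)\,\bar g_{i+1}\,\bar f_{j+1}-\sum_{i+j=k}\varphi_{i}\,\bar g_{j}.
\]
Every quantity on the right involves only $\bar g_{0},\dots,\bar g_{k}$ and $\bar f_{0},\dots,\bar f_{k+1}$, all of which are already known (the $\bar f$'s by the previous step, the $\bar g$'s by the inductive hypothesis). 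Since $\bar f_{1}\neq 0$ by assumption, the leading coefficient $\tfrac{1}{2}(k+1)\bar f_{1}$ is invertible, so $\bar g_{k+1}$ is uniquely determined. Induction on $k$ thus recovers the full formal Taylor expansion of $\bar g(r)$ from the initial data $(\bar g_{0},\bar f_{0},\bar f_{1})$.

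The only subtle point, and the single place the hypothesis $\bar f_{1}\neq 0$ is invoked, is exactly this invertibility; if $\bar f_{1}$ vanished, the recursion from (C) would collapse already at $k=0$. Equation (B) is never used in the recursion: it is an integrability condition, automatically satisfied for any genuine smooth solution, as can be seen by differentiating the first integral $|\nabla f|^{2}+2\Phi(f)=\mathrm{const}$ (with $\Phi'=\varphi$), the natural analogue of Lemma~\ref{l2.1} for the generalized Obata equation.
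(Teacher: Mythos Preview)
Your proof is correct and follows essentially the same approach as the paper: both set up Fermi coordinates $g=dr^{2}+\bar g(r)$, split $\nabla^{2}f+\varphi(f)g=0$ into the three component families (your (A), (B), (C) are the paper's (i), (iii), (ii) respectively), use the normal--normal ODE to determine all $\bar f_{k}$ from $(\bar f_{0},\bar f_{1})$, and then use the tangential--tangential equation to solve recursively for $\bar g_{k+1}$, invoking $\bar f_{1}\neq 0$ at exactly the same step. The paper likewise remarks that the mixed equation (your (B), its (iii)) is an additional constraint not needed for the determination.
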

\begin{proof}
Now $\bar{g}_0, \bar{f}_0, \bar{f}_1$ are given.
Choose a local coordinate patch  $(U: x^1,..., x^{n-1})$ for $\partial M$. Then near the boundary, $(U\times [0,\epsilon): x^1,..., x^{n-1}, x^n=r)$ is a local coordinate patch for $M$.
We use  $\alpha, \beta, \gamma,\delta,\cdots$ to denote indices $\in\{1,...,n-1\}$. Direct computation shows that the Christoffel symbols for $g$ are as follows:
$$
\begin{aligned}
&\Gamma_{nn}^n=\Gamma_{nn}^{\alpha}=\Gamma_{n\alpha}^{n}=0,
\\
&\Gamma_{\alpha\beta}^n = -\frac{1}{2}\bar{g}'_{\alpha\beta}(r)
=-\frac{1}{2}\sum_{k=0}^{\infty} (k+1) r^k [\bar{g}_{k+1}]_{\alpha\beta}+ O(r^{\infty}),
\\
&\Gamma_{\alpha n}^{\beta} = \frac{1}{2}\bar{g}^{\beta\gamma}(r) \bar{g}'_{\alpha\gamma}(r)
=\frac{1}{2}\bar{g}^{\beta\gamma}(r)\sum_{k=0}^{\infty} (k+1) r^k [\bar{g}_{k+1}]_{\alpha\gamma}+ O(r^{\infty}),
\\
&\Gamma_{\alpha\beta}^{\gamma} = \frac{1}{2} \bar{g}^{\gamma\delta}(r) \Lambda_{\alpha\beta\delta}(\bar{g}(r))
= \frac{1}{2} \bar{g}^{\gamma\delta}(r)  \sum_{k=0}^{\infty} r^k \Lambda_{\alpha\beta\delta}(\bar{g}_k)+ O(r^{\infty}),
\end{aligned}
$$
where $\Lambda_{\alpha\beta\delta}(g)=\partial_{\alpha}g_{\beta\delta}+ \partial_{\beta}g_{\alpha\delta}-\partial_{\delta}g_{\alpha\beta}$. Notice here
$$
\bar{g}^{\alpha\beta}(r) = \sum_{k=0}^{\infty} r^k G^{\alpha\beta}_k +O(r^{\infty}),
$$
where $G^{\alpha\beta}_k\in C^{\infty}(\partial M, S^2T_*\partial M)$ is determined by $\bar{g}_0, ..., \bar{g}_k$. In particular,
$$
G^{\alpha\beta}_0=[\bar{g}_0]^{\alpha\beta}, \quad
G^{\alpha\beta}_1=-[\bar{g}_1]^{\alpha \beta},\quad
G_2^{\alpha\beta} =-[\bar{g}_2]^{\alpha\beta}+[\bar{g}_1]^{\alpha\gamma}[\bar{g}_1]_{\gamma}^{\beta}.
$$
Here the lifting of indices is w.r.t. metric $\bar{g}_0$. Similarly,
$$
\varphi(f)=\sum_{k=0}^{\infty} r^kF_k +O(r^{\infty}),
$$
where $F_k\in C^{\infty}(\partial M)$ is determined by $\bar{f}_0, \bar{f}_1, \cdots, \bar{f}_k$ and $\varphi(\bar{f}_0), \varphi'(\bar{f}_0), \cdots, \varphi^{(k)}(\bar{f}_0)$. In particular
$$
F_0=\varphi(\bar{f}_0), \quad F_1= \varphi'(\bar{f}_0)f_1, \quad
F_2=\frac{1}{2}\varphi''(\bar{f}_0)\bar{f}_1^2+ \varphi'(\bar{f}_0)\bar{f}_2.
$$
The equations are divided into several groups according to various directions:
\begin{itemize}
\item[(i)]
$\nabla_n\nabla_n f+\varphi(f)g_{nn}=\partial_r^2 f +\varphi(f)=0$ implies the ODE for $f$ in variable $r$:
$$
f'' +\varphi(f)=0, \quad f(0)=f_0, \quad f'(0)=f_1.
$$
Since $\varphi$ is smooth, the ODE has a unique solution determined by initial data $f_0, f_1$.
Hence, the Taylor expansion of $f$ is also determined by $f_0, f_1$.

\item[(ii)]
$\nabla_{\alpha}\nabla_{\beta} f+\varphi(f)g_{\alpha\beta}
=\partial_{\alpha}\partial_{\beta} f -\Gamma_{\alpha\beta}^{n}\partial_rf-\Gamma_{\alpha\beta}^{\gamma}\partial_{\gamma}f +\varphi(f)g_{\alpha\beta}=0$ implies that the $k$-th coefficient in the Taylor expansion  of the left hand side vanishes for all $k$.
Notice that
the coefficient of $r^k$ in $-\Gamma_{\alpha\beta}^{n}\partial_rf$ is
$$
\begin{gathered}
\frac{1}{2}\sum_{j=1}^{k+1} j(k+2-j) f_{k+2-j}[\bar{g}_j]_{\alpha\beta}
\\
= \frac{1}{2}\sum_{j=1}^{k} j(k+2-j) f_{k+2-j}[\bar{g}_j]_{\alpha\beta} + \frac{k+1}{2}f_1[\bar{g}_{k+1}]_{\alpha\beta};
\end{gathered}
$$
and the coefficient of $r^k$ in $\partial_{\alpha}\partial_{\beta} f -\Gamma_{\alpha\beta}^{\delta}\partial_{\delta}f +\varphi(f)g_{\alpha\beta}$ is given by
$$
\partial_{\alpha}\partial_{\beta}\bar{f}_k -\frac{1}{2}\sum_{i=0}^{k}\sum_{j=0}^{k-i} G_i^{\delta\gamma}\Lambda_{\alpha\beta\gamma}(\bar{g}_j)\partial_{\delta}\bar{f}_{k-i-j} + \sum_{i=0}^k F_i [\bar{g}_{k-i}]_{\alpha\beta}.
$$
Therefore,
$$
\begin{aligned}
- \frac{k+1}{2}\bar{f}_1[\bar{g}_{k+1}]_{\alpha\beta}
=&\ \partial_{\alpha}\partial_{\beta}\bar{f}_k
+ \frac{1}{2}\sum_{j=1}^{k} j(k+2-j) \bar{f}_{k+2-j}[\bar{g}_j]_{\alpha\beta}
\\
&\
-\frac{1}{2}\sum_{i=0}^{k}\sum_{j=0}^{k-i} G_i^{\delta\gamma}\Lambda_{\alpha\beta\gamma}(\bar{g}_j)\partial_{\delta}\bar{f}_{k-i-j}
+ \sum_{i=0}^k F_i [\bar{g}_{k-i}]_{\alpha\beta}.
\end{aligned}
$$
The right hand side only involves $\bar{f}_0,..., \bar{f}_k$, $\bar{g}_0,..., \bar{g}_k$ and $\varphi(\bar{f}_0), \varphi'(\bar{f}_0), ..., \varphi^{(k)}(\bar{f}_0)$.
Given $\varphi$, if $\bar{f}_1\neq 0$, then $\bar{g}_{k+1}$ is determined by $\bar{f}_0,..., \bar{f}_{k+1}, \bar{g}_0, ..., \bar{g}_{k}$.

\item[(iii)]
$\nabla_{\alpha}\nabla_{n} f+\varphi(f)g_{\alpha n}=\partial_{\alpha}\partial_rf-\Gamma^{\beta}_{\alpha n} \partial_{\beta}f =0$ implies the coefficient of $r^k$ vanishes:
$$
\begin{gathered}
(k+1)\partial_{\alpha}\bar{f}_{k+1}-\frac{1}{2} \sum_{i=0}^k \sum_{j=0}^{k-i} (i+1)[\bar{g}_{i+1}]_{\alpha\delta}G_j^{\beta\delta}\partial_{\beta}\bar{f}_{k-i-j}=0.
\end{gathered}
$$
This impose further restriction on the asymptotics.
\end{itemize}
\end{proof}

\begin{remark}
In above proof, (ii) actually means on each level set $\partial M_{r}=\{r=constant\}$ with induced metric $g(r)$, $f|_{\partial M_r}$ satisfies
$$
\bar{\nabla}^2 f+\varphi(f) g(r)-h(r)f'(r)=0,
$$
where $h(r)=\frac{1}{2}g'(r)$ is the second fundamental form and $\bar{\nabla}$ is the connection w.r.t. $g(r)$;
(iii) actually means on $\partial M_{r}$,
$$
\bar{\nabla}_{\alpha}f'(r)-[h(r)]_{\alpha\beta} \bar{\nabla}^{\beta}f=0.
$$
\end{remark}

\begin{theorem}\label{gluing}
For $i=1,2,3$,
let $(M_i, g_i) $ be smooth compact manifolds with common boundary, i.e.
\[
(\partial M_1, g_1|_{\partial M_1})=(\partial M_2, g_2|_{\partial M_2})=(\partial M_3,g_3|_{\partial M_3}),
\]
and let $f_i\in C^{\infty}(M_i)$.
Given $\varphi\in C^{\infty}(\mathbb{R})$.
we further assume $f_1$ and $f_2$ satisfy
$$
\nabla^2 f_1+\varphi(f_1)g_1=0,\quad \textrm{in $M_1$};\quad
\nabla^2 f_2+\varphi(f_2)g_2=0,\quad \textrm{in $M_2$}
$$
with common Dirichlet and Neumann data:
\[
 f_1|_{\partial M_1}=f_2|_{\partial M_2},
\quad \frac{\partial f_1}{\partial \nu_1}=\frac{\partial f_2}{\partial \nu_2}\neq 0.
\]
Then $(M_1, g_1, f_1)$ can be smoothly glued to $(M_3, g_3, f_3)$ across the boundary if and only if $(M_2, g_2, f_2)$ can be smoothly glued to $(M_3, g_3, f_3)$.
\end{theorem}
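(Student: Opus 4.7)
The plan is to reduce smooth gluing across the interface to a comparison of one-sided Taylor expansions, and then invoke Lemma~\ref{asymp} to identify those expansions.

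First, I would fix the geometric setup. Using the normal exponential map, identify a collar of $\partial M_i$ with $\partial M_i\times[0,\epsilon)_{r_i}$, so that
\[
g_i=dr_i^2+\bar{g}_i(r_i),\quad f_i=f_i(x,r_i),
\]
with Taylor expansions at $r_i=0$ of the form (\ref{taylor}), giving collections of coefficients $\{\bar{g}_k^{(i)},\bar{f}_k^{(i)}\}_{k\geq 0}$. The gluing of $(M_i,g_i,f_i)$ to $(M_3,g_3,f_3)$ across the common boundary is effected by introducing a two-sided coordinate $s$ that equals $r_i$ on the $M_i$ side and $-r_3$ on the $M_3$ side. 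Then I would observe that $C^{\infty}$-smoothness of the resulting metric and function across $s=0$ is equivalent to matching all one-sided Taylor coefficients: since $g_i,f_i$ and $g_3,f_3$ are smooth up to the respective boundaries, the glued object is $C^{\infty}$ at $s=0$ iff
\[
\bar{g}_k^{(i)}=(-1)^k\bar{g}_k^{(3)},\qquad \bar{f}_k^{(i)}=(-1)^k\bar{f}_k^{(3)}\quad\text{for every } k\geq 0.
\]
This is a standard fact (successive one-sided derivatives agree iff the combined function is $C^{\infty}$), which I would justify in one line by induction on $k$ using continuity of each $k$-th derivative up to the interface from both sides.

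Next comes the analytic heart. Since $f_1,f_2$ satisfy the generalized Obata equation $\nabla^2 f_i+\varphi(f_i)g_i=0$ with the same $\varphi$, and since by hypothesis they share the induced boundary metric $\bar{g}_0$, the Dirichlet value $\bar{f}_0$, and the Neumann value $\bar{f}_1\neq 0$, Lemma~\ref{asymp} applies to both. The lemma's recursive procedure determines $\bar{g}_{k+1}$ and $\bar{f}_{k+1}$ from the previously-computed data together with $\varphi^{(j)}(\bar{f}_0)$, using crucially the nonvanishing of $\bar{f}_1$ in the formula from step (ii). Since all inputs to this recursion coincide for $(g_1,f_1)$ and $(g_2,f_2)$, one concludes
\[
\bar{g}_k^{(1)}=\bar{g}_k^{(2)},\quad \bar{f}_k^{(1)}=\bar{f}_k^{(2)}\quad\text{for every } k\geq 0.
\]

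With these two ingredients the theorem is immediate: the Taylor-matching condition for $(M_1,g_1,f_1)\#(M_3,g_3,f_3)$ and for $(M_2,g_2,f_2)\#(M_3,g_3,f_3)$ involve exactly the same left-hand sides, hence one holds iff the other does. The main obstacle I anticipate is bookkeeping the sign conventions when passing from each one-sided coordinate $r_i$ to the two-sided coordinate $s$, and being careful that the ``smoothness across the boundary iff all Taylor coefficients agree'' statement really only needs $C^{\infty}$-regularity up to the boundary on each side — which is given — rather than any real-analyticity, so that no Borel-type flatness ambiguity intervenes in the equivalence.
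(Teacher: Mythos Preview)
Your proposal is correct and follows essentially the same approach as the paper: reduce smooth gluing to matching of boundary Taylor expansions in the normal collar, then invoke Lemma~\ref{asymp} to conclude that $(M_1,g_1,f_1)$ and $(M_2,g_2,f_2)$ have identical expansions. The paper's proof is quite terse and omits the sign bookkeeping and the explicit ``Taylor-matching iff $C^\infty$-gluing'' step that you spell out, so your version is in fact more complete.
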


\begin{proof}
While gluing two manifolds across the common boundary, we simply take the collar neighborhoods of the boundary in each manifold and  geodesic normal defining functions and glue them naturally along the common boundary.
Then the regularity of glued metric and function can be read from the asymptotical expansions at the boundary.
By Lemma \ref{asymp},  $(M_1,g_1, f_1)$ and $(M_2,g_2, f_2)$ has the same asymptotical expansions. So if one can be smoothly glued to $(M_3, g_3, f_3)$, then the same to the other.
\end{proof}

\begin{remark}
In both Lemma \ref{asymp} and Theorem \ref{gluing}, we only need the Neumann data to be nonzero on a dense subset of the boundary.
\end{remark}

\end{document}